\newcommand{\trasp}[1]{{#1}^\mathsf{T}}
\numberwithin{equation}{section}
\theoremstyle{plain}
\newtheorem{thm}{Theorem}[section]
\newtheorem{cor}[thm]{Corollary}
\newtheorem{lem}[thm]{Lemma}
\newtheorem{prop}[thm]{Proposition}
\theoremstyle{definition}
\newtheorem{dfn}[thm]{Definition}
\theoremstyle{remark}
\newtheorem{rmk}[thm]{Remark}
		\newtheorem{rem}[thm]{Remark}		
				\newtheorem{note}[thm]{Notation}	
\renewcommand{\=}{\coloneqq}			% definisce :=
\newcommand{\N}{\mathds{N}}
\newcommand{\Q}{\mathbb{Q}}
\newcommand{\R}{\mathds{R}}
\newcommand{\OO}{\mathrm{O}}
\newcommand{\C}{\mathds{C}}
\newcommand{\diff}{\mathrm{d}}
\newcommand{\SO}{\mathrm{SO}}
\newcommand{\ccc}{\mathrm c}
\newcommand{\Id}{I}
\newcommand{\XX}{\mathbb X}
\newcommand{\mscal}[2]{\langle#1,#2\rangle_{M}}
\newcommand{\mnorm}[1]{|#1|_M}
\DeclareMathOperator{\diag}{diag}		% diagonal matrix
\newcommand{\cc}{\mathrm{CC}}
\newcommand{\cccc}{\mathrm{CCC}}
\newcommand{\sbc}{\mathrm{SBC}}
\newcommand{\csbc}{\mathrm{CSBC}}
\newcommand{\nullity}[1]{\iota^0(#1)}		% coindice di Morse
\newcommand{\iMor}[1]{\iota^{-}(#1)}
 \newcommand{\coiMor}[1]{\iota^{+}(#1)}
\renewcommand{\=}{\coloneqq}			% definisce :=
\title{Morse theory for $S$-balanced configurations\\ in the Newtonian  $n$-body problem}
\author{Luca Asselle, Alessandro Portaluri}
\date{\today}
\date{\today}
\begin{document}
 \maketitle

\begin{abstract}
For the Newtonian (gravitational) $n$-body problem in the Euclidean $d$-dimensional space, the simplest possible solutions are provided by those rigid motions (homographic solutions) in which each body moves along a Keplerian orbit and the configuration of the $n$-body is a (constant up to rotations and scalings) \textit{central configuration}. For $d\le 3$, the only possible homographic motions are 
those given by central configurations. For $d \ge 4$ instead, new possibilities arise due to the higher complexity of the orthogonal group $\OO(d)$, as observed by Albouy and Chenciner in \cite{AC98}. For instance, in $\R^4$  it is possible to rotate in two mutually orthogonal planes with different angular velocities. This produces a new balance between gravitational forces and centrifugal forces providing new periodic and quasi-periodic motions. So, for $d\ge 4$ there is a wider class of  {\em $S$-balanced configurations\/} (containing the central ones) providing simple solutions of the $n$-body problem, which  
can be characterized as well through critical point theory. 

In this paper, we first provide a lower bound on the number of balanced (non-central) configurations in $\R^d$, for arbitrary $d\ge 4$, and establish 
a version of the $45^\circ$-theorem for balanced configurations, thus answering some of the questions raised in \cite{Moe14}. Also, a careful study of the asymptotics of the coefficients of the 
Poincar\'e polynomial of the collision free configuration sphere will enable us to derive some rather unexpected qualitative consequences on the count of $S$-balanced configurations. 
In the last part of the paper, we focus on the case $d=4$ and provide a lower bound on the number of periodic and quasi-periodic motions of the gravitational $n$-body problem which improves a previous celebrated result of McCord \cite{McCord96}. 

\vspace{2mm}

{\bf Keywords:\/} $n$-body problem,  Balanced Configurations, Central Configurations, $45^\circ$-Theorem.
\end{abstract}

%
%%%%%%%%%%%%%%%%%%%%%%%%%%%%%%%%%%%%%%%%%%%%%%%%%%%%%%%%%%%%%%%%%%
%%
%%
%%
%%
%%
%%
%%
%%%%%%%%%%%%%%%%%%%%%%%%%%%%%%%%%%%%%%%%%%%%%%%%%%%%%%%%%%%%%%%%%%

\tableofcontents

\section{Introduction}\label{sec:introduction}

 The Newtonian $n$-body problem concerns the motion of $n$ point particles with masses $m_j \in \R^+$ and position $q_j \in \R^d$, where $j=1, \ldots, n$ and $ d \ge 2$, interacting each other according to Newton's law of inverses squares. The equations of motion read 
 \begin{equation}\label{eq:motions}
 m_j \ddot q_j = \dfrac{\partial U}{\partial q_j}, \quad \textrm{ where } \quad U(q_1, \ldots, q_n)\=\sum_{i<j}\dfrac{m_i m_j}{|q_i-q_j|}. 	
 \end{equation}
 As the center of mass has an inertial motion, there is no loss in generalities in assuming that the center of mass lies at the origin. 
 The {\sc configuration space with center of mass at the origin\/} is defined as 
 $$ \mathbb X\= \Set{(q_1, \ldots, q_n) \in \R^{dn}| \sum_{i=1}^n m_i q_i=0}.$$  
The potential $U$ is not defined when (at least) two particles {\em collide\/}, meaning that they have the same position. Therefore, we define the space of {\sc collision free configurations\/} as   
\[
\widehat {\mathbb X}\=\Set{q=(q_1, \ldots, q_n) \in \mathbb X| q_i \neq q_j \ \textrm{ for } i \neq j}= \mathbb X\setminus \Delta,
\]
where  
$$\Delta \= \Set{q=(q_1, \ldots, q_n) \in \R^{dn}|q_i = q_j \ \textrm{ for } i \neq j}$$ 
is the  {\sc collision set\/}. Let $M$ be the $(nd \times nd)$-diagonal {\sc mass matrix\/} defined by 
$$M\= \diag(\underbrace{m_1, \ldots, m_1}_{d{\text{-times}}}, \ldots, \underbrace{m_n, \ldots, m_n}_{d{\text{-times}}})$$ 
and let  
\[
\mscal{\cdot}{\cdot}\= \langle M \cdot, \cdot \rangle \quad \textrm{ and } \quad \mnorm{\cdot}\= \langle M \cdot, \cdot \rangle^{1/2}
\]
be respectively the {\sc mass scalar product\/} and the {\sc mass norm,\/} where $\langle \cdot, \cdot \rangle$ denotes the Euclidean product in $\R^{nd}$. 
In this notation, the equations of motion can be written as
\begin{equation}\label{eq:Newton-intro}
	\ddot q= M^{-1}\nabla U(q).
\end{equation}

Among all possible configurations of the system, a crucial role in penetrating the intricate dynamics of this problem, is played by the so-called {\sc central configurations\/} ($\cc$ in shorthand notation), namely configurations in which $M^{-1}\nabla U(q)$ is parallel to $q$: 
\begin{equation}\label{eq:cc-eq-2}
	M^{-1}\nabla U(q) + \lambda q=0
\end{equation}
%These  configurations are a special type of {\sc $S$-balanced configurations\/}.    
In other words, the acceleration vector of each particle is pointing towards the origin with magnitude proportional to the distance to the origin. As a straightforward consequence of the homogeneity of the potential we obtain that the proportionality constant $\lambda$ is actually equal to $-U(q)/\mnorm{q}^2$. 

 Equation~\eqref{eq:cc-eq-2} is a nonlinear algebraic equation which turns out to be extremely hard to solve. Despite substantial progresses (starting from the work of - among others - Smale, Conley, Albouy, Chenciner, McCord, Moeckel, Pacella) have been made in the last decades, many basic questions about $\cc$ still remain unsolved.
Nevertheless, there are several reasons why $\cc$ are of interest in the $n$-body problem and more generally in Celestial Mechanics: 
 \begin{itemize}
\item[-] Every $\cc$ defines a {\sc homothetic solution\/} of~\eqref{eq:Newton-intro}, namely a solution which preserves its shape for all time while receding from or collapsing into the center of mass.
\item[-] Planar $\cc$ give rise to a family of periodic motions of~\eqref{eq:Newton-intro}, the so-called {\sc relative equilibria\/}, in which each of the bodies moves on a circular Kepler orbit. In other words, the configuration rigidly rotates at a constant angular speed about the center of mass. These motions are true equilibrium solutions in a uniformly rotating coordinate system and correspond to elliptical orbits of the Kepler problem of eccentricity $e=0$. It is worth observing that relative equilibria are the simplest periodic solutions of the $n$-body problem having an explicit formula.
\item[-] Planar $\cc$ more generally give rise to a family of  {\sc homographic  solutions\/} of~\eqref{eq:Newton-intro} in which each particle traverses an elliptical orbit  with eccentricity $ e \in (0,1)$. In this case, both the radius $r(t)$ of the orbit and the angular speed $\dot \vartheta(t)$ are time-dependent solutions of the Kepler problem (in polar coordinates). Moreover, the configuration remains similar to the initial configuration throughout the motion, varying only in size. 
\item[-] $\cc$ control the qualitative behavior of an important class of colliding solutions (and parabolic motions) of the $n$-body problem. More precisely, if an orbit approaches or recedes from total collapse,  it does so approximating homothetic orbits. So, orbits passing arbitrarily close to a total collision do it by first approaching one homothetic motion and then receding from the collision following a (possibly different) homothetic motion.
 \end{itemize}
 
 In other words, for the $n$-body problem in $\R^d$, $d \le 3$, any $\cc$ generates a homothetic ejection or collapse. Morevorer, any planar $\cc$ generates planar Keplerian homographic motions, whereas
 spatial non-planar $\cc$ can only produce homothetic motions. Configurations which are not central cannot produce homographic motions at all. If we instead allow dimensions $d \ge 4$, then - as observed by Albouy and Chenciner in \cite{AC98} (cfr. also \cite{Moe14}) - there is a wider class of the so-called ``$S$-balanced configurations'' which produces relative equilibria of the $n$-body problem. 
 These new phenomena are due to the higher complexity of the group of rotations in higher dimensions, which allows, for example, to rotate in two mutually orthogonal planes with different angular velocities. This leads to new ways of balancing the gravitational forces with centrifugal forces in order to get new relative equilibria of the $n$-body problem. We shall notice that, in contrast with the case $d=2$, such relative equilibria need not be periodic in time. Indeed, if the angular velocities are rationally independent, then the resulting motions will be only quasi-periodic.  
 
In order to formally define such a class of configurations, we consider positive real numbers $s_1\ge  \ldots\ge  s_d > 0$  (possibly not all different) and set 
\[
\widehat S := \diag(\underbrace{S,...,S}_{n\text{-times}}) \in \R^{nd\times nd},
\]
where $S= \diag (s_1,...,s_d)$. An {\sc $S$-balanced configuration\/}, ($\sbc$ in shorthand notation),  is an arrangement of the   masses whose associated configuration vector $q\in \widehat {\mathbb X}$  satisfies
	\begin{equation}\label{eq:intro-s-balanced-conf}
		\nabla U(q)+ \lambda \widehat SM q=0
	\end{equation} 
for some real (positive) constant $\lambda$. We observe that for $n=3$  a planar non-equilateral and non-collinear isosceles triangle is a $\sbc$ (but not a $\cc$) as soon as the two symmetric masses are equal. Also, the class of $\sbc$ strictly includes $\cc$ (corresponding, in fact, to $S= \Id$), and Equation~\eqref{eq:intro-s-balanced-conf} is in general only invariant under the (diagonal) action of a subgroup of $\SO(d)$, which is proper as soon as $S\neq \Id$. In the extremal case in which the (diagonal) entries of $S$ are pairwise distinct, the symmetry group of Equation~\eqref{eq:intro-s-balanced-conf} is trivial.

\begin{rmk}
The usual definition of $S$-balanced configurations (cfr. e.g. \cite{Moe14}) requires $S$ to be a symmetric positive definite matrix. However,  
since the problem is invariant by unitary transformation, there is no loss of generality in assuming that $S$ be in diagonal form, i.e. 
that the spectral basis coincide with the canonical basis of $\R^d$. Also, one requires $S$ to be minus the square of a skew-symmetric matrix (e.g. a complex structure), 
and hence, in particular, that all eigenvalues of $S$ have even multiplicity.
As we shall see later, it will be very convenient to allow $S$ to have eigenvalues of odd multiplicity (in particular simple). \qed
\end{rmk}

It is worth pointing out that, in even dimensions, if $S=-A^2$ where $A$ is an antisymmetric matrix, every $\sbc$ gives rise to a uniformly rotating relative equilibrium solution of Equation~\eqref{eq:Newton-intro}. To get more general homographic solutions it is still necessary to have a $\cc$ in which the bodies run on planar Keplerian ellipses. However, ellipses corresponding to different bodies 
may lie in different planes. 

In order to find solutions of~\eqref{eq:intro-s-balanced-conf}, we will exploit the fact that $\sbc$ (as well as $\cc$) admit a variational characterization as critical points of the restriction of $U$ to the 
{\sc collision free configuration sphere}
$$\widehat{\mathbb S}\= \mathbb S \setminus \Delta, \qquad \text{where}\qquad  \mathbb S \=\Set{q \in \mathbb X| I_S(q)=1}$$ 
is the {\sc configuration sphere}  and  $I_S(q)$ if the {\sc S-weighted moment of inertia}, namely the norm squared associated to the scalar product induced by $ \widehat SM$.
The proof of Shub's lemma carries over word by word to this more general situation, thus showing that $\sbc$ cannot accumulate on the collision set $\Delta$. In particular, we can
 find a neighborhood of $\Delta$ in $\mathbb S$ which contains no $\sbc$. This opens up the possibility of using Morse theoretical methods to obtain lower bounds on the number 
 of solutions to~\eqref{eq:intro-s-balanced-conf}. However, in doing this, one has to keep in mind that:
 \begin{itemize}
 \item Equation~\eqref{eq:intro-s-balanced-conf} has a non-trivial symmetry group whenever 
 at least two entries of the matrix $S$ are equal, and hence critical points are \textit{not} isolated unless all entries of $S$ are distinct. Also, such an action is not free (actually, non even locally free) as soon as $d\ge 3$. Therefore, the quotient space is in general not even an orbifold but rather an Alexandrov space, and hence a delicate stratified Morse theory is needed when working on the quotient space. Methods such as Morse-Bott theory, which allow to deal with functions having (Morse-Bott) 
 non-degenerate critical manifolds, do not take into account the symmetries of the problem and hence necessarily lead to weaker results.  
 \item Even if one is able to give lower bounds on the number of $\sbc$, for instance via equivariant Morse theory or Morse-Bott theory, one has additionally to show that the $\sbc$ that one finds are actually not $\cc$.   This is the same problem that one has to face when trying to exclude  that spatial $\cc$ are actually not planar: despite the lower bounds provided by Pacella \cite{Pac86}, it is so far not known whether there is more than one spatial  $\cc$ which is not planar. 
 \item All previously mentioned methods are essentially based on equivariant homology and intersection homology theories, respectively, which are
 computationally inaccessible in full generality. 
 \end{itemize}
 
 We will circumvent all these difficulties at once by restricting our attention to a subspace of $\R^d$ in which the eigenvalues of $S$ are pairwise distinct
 (in what follows this will be called the ``reduction to Assumption {\bf(H1)}'' argument). Assumption {\bf (H1)} will {\em kill\/} all symmetries of Equation~\eqref{eq:intro-s-balanced-conf}, thus allowing us to apply standard Morse theory arguments. Also, the fact that all entries of $S$ are distinct implies that solutions of~\eqref{eq:intro-s-balanced-conf}
 which are not collinear cannot be $\cc$. More will be said about Assumption {\bf(H1)} in Section~\ref{sec:inertia-balanced}. Therefore, 
 applying classical Morse theory to $\widehat U:=U|_{\widehat {\mathbb S}}$ under Assumption {\bf(H1)}, we derive lower bounds on the number of $\sbc$ assuming non-degeneracy. These lower bounds will depend on suitable spectral gap conditions involving the eigenvalues of the matrix $S$ 
 (cfr. Theorems~\ref{thm:main1}, \ref{thm:main2}, and \ref{thm:main3} for the precise statements).  A first key step in this direction is provided by a careful investigation of  collinear $\sbc$ (in shorthand notation $\csbc$) and their inertia indices  (cfr.  Lemma~\ref{thm:morse-index-co-balanced} for further details). The Poincar\'e polynomial of $\mathbb S$ is the same as in the case of $\cc$, however establishing 
precise growth estimates on its coefficients (cfr. Proposition~\ref{lem:coeffofpoincarepolynomial}) we are able to derive non-trivial and rather unexpected qualitative consequences on the count of $\sbc$: roughly speaking, $\csbc$ corresponding to smaller eigenvalues of $S$ will in many cases contribute more 
 than $\csbc$ corresponding to the largest eigenvalue of $S$ (this should be compared with \cite[Pag. 151]{Moe14}). 
We stress the fact that all lower bounds obtained via the reduction to {\bf(H1)} argument provide genuine $\sbc$.

In the last section we prove a sharper lower bound on the number of non-degenerate $\sbc$ in dimension $d=4$ for $S= \diag(s,s,1,1)$. Even if the proof is again based on Morse theory, the argument is completely different than the one used in the proof of Theorem~\ref{thm:main1}. After restricting our attention to the $\{0\}\times \R^3\subset \R^4$, we
quotient out $\widehat{\mathbb S}$ by the $\SO(2)$-action given by rotations in the $\{0\}\times \R^2$-plane after removing the manifold $\mathbb Y$ of 
collinear configurations contained in $\R\times \{0\}$. 
This is done by first establishing a so-called {\sc $45^\circ$ Theorem for $\sbc$\/} (cfr. Theorem~\ref{thm_45-theorem}). 
Therefore, the quotient
\[
\overline{\mathcal S}= (\widehat{\mathbb S}\setminus \mathbb Y)/\SO(2)
\]
is a manifold. We then compute in Theorem~\ref{thm:homology-of-sbar} the homology of $\overline{\mathcal S}$ by using Alexander duality and the Gysin long exact sequence, 
and finally obtain the desired lower bound by explicitly computing the sum of the Betti numbers of $\overline {\mathcal S}$.
This  has an important consequence on the lower bound of periodic orbits of the gravitational $n$-body problem. In fact, assuming non-degeneracy, we have at least 
\begin{equation}\label{eq:newlowerbound-intro}
	 n! \Big ( h(n) + \dfrac 12 + \dfrac 1n\Big ), \qquad  h(n) := \sum_{j=3}^{n} \frac 1j,
\end{equation}
relative equilibria in $\R^4$ for fixed $s>1$. Such a number is roughly twice as large as the lower bound proved by McCord in \cite{McCord96}:
among all of these relative equilibria, at least 
\[
\dfrac{n!}{2} \big (h(n) + 1)
\]
are produced by $\cc$, namely $n! h(n)/2$ corresponding to planar non-collinear $\cc$ (McCord's estimate) and $n!/2$ corresponding to collinear $\cc$ ($\cccc$ in shorthand notation). By subtracting the latter integer to the former one, we get at least
\begin{equation}\label{eq:finale-intro}
n! \Big (\dfrac{h(n)}{2} + \dfrac 1n \Big )
\end{equation}
relative equilibria which do not come from McCord's estimate, but could anyway come from $\cc$ (unlike in Theorem~\ref{thm:main1}, this cannot be excluded a priori). 
Finally, we observe that the relative equilibria coming from genuine $\sbc$ are periodic in time for $s\in \Q$, and quasi-periodic otherwise.

%It is a challenging longstanding open problem to establish a Morse theory for periodic orbits in the case of gravitational or weakly self-interacting potential. Since, relative equilibria are special periodic solutions of the gravitational $n$-body problem, we conclude that in $R^4$ we have at least 
%\[
 %n! \Big ( h(n) + \dfrac 12 + \dfrac 1n\Big )
%\]
%periodic orbits.  By choosing in the matrix $S$,  the eigenvalue  $s\in \R\setminus \Q$ 
%we produce  quasi-periodic motions. So,  the number  given at Equation~\eqref{eq:newlowerbound-intro} provided a lower bounds on the number of {\sc quasi-periodic motions\/} of the $n$-body problem in $\R^4$. 

We finish this introduction by summarizing, for the reader's sake, some of the notation that we shall use henceforth throughout the paper.

\begin{itemize}
\item $M$ is the $nd$-dimensional mass matrix; $\Id_d$ is the $d$-dimensional identity matrix; $S$ is a $d$-dimensional diagonal matrix, $\widehat S \= \diag(S,\ldots,S)$.
\item $\mscal{\cdot}{\cdot}, \mnorm{\cdot}$ denote respectively the mass scalar product and the mass norm.
 \item $I(q), I_S(q)$ denote the moment of inertia and the $S$-weighted moment of inertia (w.r.t. origin). 
\item $\XX$ is the configuration space with center of mass at the origin; $\Delta$ is the collision set and $\widehat {\mathbb X}\=X\setminus \Delta$ is the collision free configuration space; $
 \mathbb S$ is the configuration sphere and $\widehat{\mathbb S}\= \mathbb S \setminus \Delta$ is the collision free configuration sphere.
 \item $\widehat U\= U|_{\widehat{\mathbb S}}$ is the restriction of $U$ to $\widehat{\mathbb S}$.
\item $\cc, \sbc, \cccc, \csbc$ denote respectively the set of central configurations, $S$-balanced configurations, collinear $\cc$ and finally collinear $\sbc$.
\item $\iota^-(q), \iota^+(q),\iota^0(q)$ denote respectively the Morse index, the Morse co-index, and the nullity of the $\sbc$ $q$ as a critical point of $\widehat U$. 
\end{itemize}

\noindent \textbf{Acknowledgments.}  We warmly thank the anonymous referee for his careful reading of the paper and for his precious comments and suggestions which enabled us to improve significantly the paper and remove several inaccuracies.
Luca Asselle is partially supported by the DFG-grant 380257369 ``Morse theoretical methods in Hamiltonian dynamics''.

%%%%%%%%%%%%%%%%%%%%%%%%%%%%%%%%%%%%%%%%%%%%%%%%%%%%%%%%%%%%%%%%%%
%%
%%
%%
%%
%%
%%
%%
%%%%%%%%%%%%%%%%%%%%%%%%%%%%%%%%%%%%%%%%%%%%%%%%%%%%%%%%%%%%%%%%%%
\section{$S$-balanced  configurations: definition and basic properties}\label{sec:inertia-balanced}

This section is devoted to recall the definition of $S$-balanced configurations and to collect some of their properties. Our basic reference, for this section, is \cite{Moe14}. 

For $n \ge 2$, let $m_1, \ldots, m_n$ be positive real numbers (which can be thought of as the masses of the $n$ points) and let $M$ be the diagonal (block) $(nd\times nd)$-matrix defined as 
\[
M\=[M_{ij}]_{i,j=1}^n,\quad M_{ij}\= m_j \delta_{ij}\Id_d,
\]
where $\Id_d$ denotes the $d$-dimensional identity matrix, $ d \ge 1$. Denoting by $\langle \cdot, \cdot \rangle$ the Euclidean product in $\R^{nd}$, we denote by 
\[
\mscal{\cdot}{\cdot}\= \langle M \cdot, \cdot \rangle \quad \textrm{ and } \quad \mnorm{\cdot}\= \langle M \cdot, \cdot \rangle^{1/2}
\]
respectively the {\sc mass scalar product\/} and the {\sc mass norm.\/} 
 As the center of mass has an inertial motion, it is not restrictive to prescribe its position at the origin. Therefore, we define the {\sc configuration space with center of mass at the origin\/} as
 \[
 \mathbb X\= \Set{(q_1, \ldots, q_n) \in \R^{dn}| \sum_{i=1}^n m_i q_i=0}.
 \]
 It is readily seen that $\mathbb X$ is an $N$-dimensional (real) vector space where $N\= d(n-1)$. We define the space of {\sc collision free configurations\/} as   
\[
\widehat {\mathbb X}\=\Set{q=(q_1, \ldots, q_n) \in \mathbb X| q_i \neq q_j \ \textrm{ for } i \neq j}= \mathbb X\setminus \Delta,
\]
where  
$$\Delta \= \Set{q=(q_1, \ldots, q_n) \in \R^{dn}|q_i = q_j \ \textrm{ for } i \neq j}$$ 
denotes  the  {\sc collision set\/}.   
The   {\sc Newtonian potential\/}  $U:\widehat {\mathbb X}\to \R$ at the {\sc  configuration vector\/} $q$ is given by
\[
U(q)\=\sum_{\substack{i <j} } \dfrac{m_i m_j}{|q_i-q_j|}.
\]
and so, Newton's equations of motion read as follows
\[
M \ddot q= \nabla U(q) = \sum_{i\neq j} \frac{m_im_j}{|q_i-q_j|^3} \cdot (q_i-q_j), 
\]
where $\nabla U$ denotes the $nd$-dimensional gradient.
Given positive real numbers $s_1\ge \ldots\ge s_d > 0$  (possibly not all different), we let $S$ be the diagonal $(d\times d)$-matrix defined by  
$S= \diag (s_1,...,s_d)$, and 
\[
\widehat S := \diag(\underbrace{S,...,S}_{n\text{-times}}) \in \R^{nd\times nd}.
\]
\begin{dfn}\label{def:s-balanced-conf}
	An {\em $S$-balanced configuration\/}, ($\sbc$ in shorthand notation),  is an arrangement of the   masses whose associated configuration vector $q\in \widehat {\mathbb X}$  satisfies
	\begin{equation}\label{eq:s-balanced-conf}
		\nabla U(q)+ \lambda \widehat SM q=0
	\end{equation} 
	for some real (positive) constant $\lambda$.
\end{dfn}

\begin{rem}
In \cite{Moe14} the matrix $S$ is assumed to be symmetric and positive definite. Nevertheless, we are not losing generality by assuming here that 
$S$ is in diagonal form. Indeed, a symmetric matrix can always be diagonalized by choosing a spectral basis. In other words, here we are simply assuming that the spectral basis coincides with the standard basis of $\R^d$. This is possible since the formulation of the problem doesn't depend upon this choice. 
Also, one usually assumes that $S$ be minus the square of a skew-symmetric matrix, hence in particular that each eigenvalue of $S$ have even multiplicity. As we shall see later, it will be very convenient to allow $S$ to have eigenvalues of odd multiplicity, in particular simple eigenvalues. \qed
\end{rem}

The two extreme cases correspond to $s_1=\ldots= s_d=1$, respectively to $s_i \neq s_j$ for all $i \neq j$. In the former case we get the well-known notion of {\sc central configuration\/} ($\cc$ for short) for which a rich literature is nowadays available, whereas the latter case will be the main object of interest of the present paper. 
\begin{dfn}\label{def:inertia-momentum}
The  {\sc moment of inertia (w.r.t. $O$)\/} for the  configuration vector $q$ is defined by  
\[
I(q)\=\mnorm{q}^2.
\]
Analogously, we term {\sc $S$-weighted moment of inertia (w.r.t. $O$)\/}
the positive number 
\[
I_S(q)\=\langle \widehat S Mq,  q\rangle=|q|^2_{S}
\]
 where $|\cdot|^2_{S}$ denotes the norm squared induced by the scalar product $\langle \cdot, \cdot \rangle_{S}\= \langle \widehat SM \cdot, \cdot \rangle$.
\end{dfn}
\begin{rem}
Taking the scalar product of Equation~\eqref{eq:s-balanced-conf} with $q$ and using Euler's theorem for positively homogeneous functions,  we get that the (positive) constant $\lambda$ appearing in Equation~\eqref{eq:s-balanced-conf} is given by 
\[
\lambda\= \dfrac{U(q)}{I_S(q)}.
\]
In particular, for any $\sbc$ $q$, we get a continuous family of $\sbc$ by scaling.   \qed
\end{rem}
\begin{dfn}\label{def:conf-mnfld}
Under the previous notation, we define the {\sc  configuration sphere\/} and the {\sc collision free configuration sphere\/} as follows 
\[
 \mathbb S \=\Set{q \in \mathbb X| I_S(q)=1}\quad \textrm{ and } \quad \widehat{\mathbb S}\= \mathbb S \setminus \Delta 
\]
We refer to its elements as {\sc (normalized) configuration vectors.\/}
\end{dfn}
It is immediate to check that  $ \mathbb S $ is a smooth compact manifold diffeomorphic to a  $(N-1)$-dimensional sphere. As a direct consequence of the scaling property of Equation~\eqref{eq:s-balanced-conf}, with any $\sbc$ we can associate a unique normalized $\sbc$. Hereafter, we will refer to normalized $\sbc$ simply as $\sbc$.

The next result can be obtained by a straightforward modification of the proof 
of the variational characterization of $\cc$ and provides a variational characterization of $\sbc$ as critical points of the restriction of $U$ to the collision free configuration sphere.

\begin{lem}\label{thm:s-balanced-critical-points}
A (normalized) configuration vector $q$ is an $\sbc$  if and only if it is a critical point of 
$\widehat U\= U|_{\widehat{\mathbb S}}.$ \qed
\end{lem}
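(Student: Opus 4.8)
The plan is to use the method of Lagrange multipliers adapted to the weighted inner product $\langle\cdot,\cdot\rangle_S=\langle\widehat SM\cdot,\cdot\rangle$. Since $\widehat S$ is positive definite (all $s_i>0$) and $M$ is positive definite, the bilinear form $\langle\cdot,\cdot\rangle_S$ is a genuine inner product on the ambient space $\R^{nd}$, and its restriction to $\XX$ makes $\XX$ a Euclidean space in which $\mathbb S=\{q\in\XX : I_S(q)=1\}$ is the unit sphere; in particular $\widehat{\mathbb S}$ is a smooth submanifold of $\widehat{\mathbb X}$ of codimension one. First I would note that the defining function $I_S$ has $\langle\cdot,\cdot\rangle_S$-gradient equal to $2q$ at a point $q$, which is nonzero on $\mathbb S$, so $q$ is a regular value and the tangent space is $T_q\widehat{\mathbb S}=\{v\in\XX : \langle \widehat SMq,v\rangle=0\}$.

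Next I would compute the gradient of $\widehat U=U|_{\widehat{\mathbb S}}$ with respect to the metric induced by $\langle\cdot,\cdot\rangle_S$. The differential of $U$ at $q\in\widehat{\mathbb X}$, acting on $v\in\XX$, is $\langle\nabla U(q),v\rangle$ (Euclidean gradient; note that $\nabla U(q)\in\XX$ automatically since $U$ is translation invariant and the masses sum the forces to zero). Converting to the $\langle\cdot,\cdot\rangle_S$-metric, the unconstrained $S$-gradient is $(\widehat SM)^{-1}\nabla U(q)=M^{-1}\widehat S^{-1}\nabla U(q)$, and the constrained gradient $\nabla_S\widehat U(q)$ is its orthogonal projection onto $T_q\widehat{\mathbb S}$ with respect to $\langle\cdot,\cdot\rangle_S$. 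Since the $\langle\cdot,\cdot\rangle_S$-orthogonal complement of $T_q\widehat{\mathbb S}$ in $\XX$ is spanned by $q$, we have
\[
\nabla_S\widehat U(q)=(\widehat SM)^{-1}\nabla U(q)-\mu\, q
\]
for the unique $\mu\in\R$ making the right-hand side $\langle\cdot,\cdot\rangle_S$-orthogonal to $q$, i.e. $\mu=\langle\nabla U(q),q\rangle/I_S(q)=-\lambda$, where in the last step I use $\langle\nabla U(q),q\rangle=-U(q)$ by Euler's theorem (the potential is homogeneous of degree $-1$) together with the formula $\lambda=U(q)/I_S(q)$ recorded in the remark.

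Then the equivalence is immediate: $q$ is a critical point of $\widehat U$ iff $\nabla_S\widehat U(q)=0$ iff $(\widehat SM)^{-1}\nabla U(q)+\lambda q=0$, and applying $\widehat SM$ to both sides (it is invertible) this is exactly $\nabla U(q)+\lambda\widehat SM q=0$, which is Definition~\ref{def:s-balanced-conf}. Conversely, starting from an $\sbc$ $q$, the scaling remark lets us normalize so $I_S(q)=1$, and the same computation run backwards shows the constrained gradient vanishes. I do not anticipate a serious obstacle here — the whole argument is the standard variational characterization of central configurations with $M$ replaced by $\widehat SM$ throughout; the only point requiring a line of care is checking that $\nabla U(q)$ lies in $\XX$ (so that the projection formalism takes place inside the right space) and that $\widehat SM$ is genuinely positive definite so that $\langle\cdot,\cdot\rangle_S$ is an inner product and $\mathbb S$ is an honest sphere — both are routine given $s_1\ge\cdots\ge s_d>0$ and $m_j>0$.
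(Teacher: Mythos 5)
Your strategy is exactly the straightforward adaptation of the variational characterization of central configurations that the paper invokes (the paper itself omits the proof), and the identification of the multiplier via Euler's theorem and the final equivalence after applying the invertible matrix $\widehat S M$ are correct. There is, however, one false assertion in your parenthetical: $\nabla U(q)$ does \emph{not} lie in $\XX$ in general. Membership in $\XX=\{w:\sum_i m_i w_i=0\}$ would require $\sum_i m_i\nabla_{q_i}U(q)=0$, whereas translation invariance only gives $\sum_i \nabla_{q_i}U(q)=0$; already for two bodies with $m_1\neq m_2$ one computes $\sum_i m_i\nabla_{q_i}U(q)\neq 0$. Fortunately this is not the fact your argument needs: what must lie in $\XX$ (so that the projection formalism takes place inside $\XX$ and no further correction along the $S$-orthogonal complement of $\XX$ appears) is the unconstrained $S$-gradient $(\widehat SM)^{-1}\nabla U(q)$, whose $i$-th component is $m_i^{-1}S^{-1}\nabla_{q_i}U(q)$; for this vector the defining condition of $\XX$ reads $\sum_i m_i\, m_i^{-1}S^{-1}\nabla_{q_i}U(q)=S^{-1}\sum_i\nabla_{q_i}U(q)=0$, which is precisely the translation-invariance identity you cite. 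With that substitution your argument closes (alternatively, one can keep the extra Lagrange multiplier coming from the linear constraint $\sum_i m_i q_i=0$, i.e. write $\nabla U(q)+\lambda\widehat SMq=M\hat a$ with $\hat a=(a,\dots,a)$, and kill $a$ by summing the $n$ components, using $\sum_i\nabla_{q_i}U(q)=0$ and $\sum_i m_iq_i=0$). Everything else — the identification of $T_q\widehat{\mathbb S}$, the value $\mu=-\lambda=-U(q)/I_S(q)$, and the converse direction via scaling — is fine.
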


In the next result, we provide a representation of the Hessian quadratic form at any $\sbc$ w.r.t. to the the Euclidean as well as w.r.t. the mass scalar product. 
\begin{lem}\label{thm:Hessian-SBC}
	The Hessian of $\widehat  U:  \widehat{\mathbb S} \to \R$ at a critical point $q$ is the quadratic form on $T_q \widehat{\mathbb S}$ 
	that can be represented  w.r.t. the Euclidean  and  mass scalar product respectively by  the $dn$- dimensional matrices 
	\begin{align*}
	\widehat H(q) &= D^2 U(q) + U(q)\widehat SM,\\   H(q) &=M^{-1}D^2 U(q)+ \widehat SU(q) .
	\end{align*}
	\end{lem}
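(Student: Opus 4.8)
The plan is to compute the Hessian of $\widehat U = U|_{\widehat{\mathbb S}}$ at a critical point $q$ directly from the definition of the Hessian of a function restricted to a submanifold. Recall that if $f : \R^N \to \R$ is smooth and $\widehat{\mathbb S} = \{I_S = 1\}$ is a regular level set, then at a critical point $q$ of $f|_{\widehat{\mathbb S}}$ there is a Lagrange multiplier $\mu$ with $\nabla f(q) = \mu\, \nabla I_S(q)$, and the Hessian of $f|_{\widehat{\mathbb S}}$ on $T_q\widehat{\mathbb S}$ is the restriction of the bilinear form $D^2 f(q) - \mu\, D^2 I_S(q)$ to $T_q\widehat{\mathbb S}$. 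I would first record that $I_S(q) = \langle \widehat S M q, q\rangle$, so $\nabla I_S(q) = 2\widehat S M q$ (using that $\widehat S M$ is symmetric, as both $\widehat S$ and $M$ are diagonal) and $D^2 I_S(q) = 2\widehat S M$, a constant symmetric matrix.

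Next I would identify the multiplier. Comparing $\nabla U(q) = \mu\, \nabla I_S(q) = 2\mu\, \widehat S M q$ with the $\sbc$ equation~\eqref{eq:s-balanced-conf}, $\nabla U(q) + \lambda \widehat S M q = 0$, gives $2\mu = -\lambda$, and by the Remark following Definition~\ref{def:inertia-momentum} we have $\lambda = U(q)/I_S(q) = U(q)$ since $q$ is normalized. Hence $-\mu\, D^2 I_S(q) = -2\mu\, \widehat S M = \lambda\, \widehat S M = U(q)\,\widehat S M$. Therefore the Hessian of $\widehat U$ at $q$, as a quadratic form on $T_q\widehat{\mathbb S}$, is represented with respect to the Euclidean scalar product by the restriction of
\[
\widehat H(q) = D^2 U(q) + U(q)\,\widehat S M
\]
to $T_q\widehat{\mathbb S}$, which is the first claimed formula.

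For the second formula, I would pass to the mass scalar product $\mscal{\cdot}{\cdot} = \langle M\cdot,\cdot\rangle$. A symmetric bilinear form $B(v,w) = \langle \widehat H(q) v, w\rangle$ is represented with respect to $\mscal{\cdot}{\cdot}$ by the operator $A$ satisfying $\mscal{A v}{w} = \langle \widehat H(q) v, w\rangle$ for all $v, w$, i.e. $\langle M A v, w\rangle = \langle \widehat H(q) v, w\rangle$, so $M A = \widehat H(q)$, that is $A = M^{-1}\widehat H(q) = M^{-1} D^2 U(q) + U(q)\, M^{-1}\widehat S M$. Since $M$ and $\widehat S$ are both block-diagonal with scalar blocks, they commute, so $M^{-1}\widehat S M = \widehat S$, giving
\[
H(q) = M^{-1} D^2 U(q) + \widehat S\, U(q),
\]
the second claimed formula. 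One should also note that $T_q\widehat{\mathbb S}$ is the same subspace whether described as the $\langle\cdot,\cdot\rangle$-orthogonal complement of $\nabla I_S(q)$ or the $\mscal{\cdot}{\cdot}$-orthogonal complement of $M^{-1}\nabla I_S(q) = 2\widehat S q$, so both representations are of the same quadratic form on the same tangent space.

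The routine parts here are the gradient/Hessian computations for $I_S$ and the change-of-basis bookkeeping; the only point requiring a little care is the bookkeeping with the two scalar products and the commutativity $M^{-1}\widehat S M = \widehat S$, together with confirming that the Lagrange multiplier is exactly $U(q)$ via the normalization $I_S(q) = 1$. I expect no genuine obstacle, as this is the standard second-variation formula for a constrained critical point, specialized to a quadratic constraint; the statement is essentially a transcription of Lemma~\ref{thm:s-balanced-critical-points} at the level of second derivatives.
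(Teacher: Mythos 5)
Your argument is correct: the second-variation/Lagrange-multiplier formula, the identification $\mu=-\lambda/2$ with $\lambda=U(q)$ from the normalization $I_S(q)=1$, and the change of representation via $M^{-1}$ together with $M^{-1}\widehat S M=\widehat S$ give exactly the two stated matrices. The paper states this lemma without proof (as a routine adaptation of the central-configuration case), and your computation is precisely the standard argument it implicitly relies on.
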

\begin{rem}
The latter representation is the natural choice if we are looking at  the Hessian at an $\sbc$  as the linearization of the gradient flow $D \widetilde \nabla \widehat U(q)$, where  $\widetilde \nabla$ is  the gradient  on $ \mathbb S $ with respect to the Riemannian metric  induced by the weighted mass-scalar product of the ambient manifold.  \qed
\end{rem}
By setting 
\[
r_{ij}\=|q_i-q_j|,\quad u_{ij}\= \dfrac{q_i-q_j}{|q_i-q_j|},
\]
a direct computation shows that the $(i,j)$-entry of the block symmetric matrix $D^2 U(q)$  is given by 
\begin{equation}\label{eq:block-structure-2}
D_{ij}\=
\dfrac{m_im_j}{r_{ij}^3}\left[\Id_d - 3 u_{ij}\trasp{u_{ij}}\right]\quad   \textrm{ for }\quad  i \neq j,  \quad \quad 
D_{ii}\=-\sum_{i \neq j} D_{ij}.
\end{equation} 
The group $\SO(d)$ acts diagonally on $\widehat{\mathbb S}$.  
Since the potential is rotationally invariant, we get that 
\[
D^2U(R q)= \trasp{R} D^2 U(q) R, \quad  R\in \SO(d).
\] 
In particular, the Hessian of a $\cc$ is $\SO(d)$ invariant, whereas for general $\sbc$ the Hessian is invariant by a proper subgroup of $\SO(d)$, which is the trivial subgroup in case $s_i\neq s_j$ for all $i\neq j$. 
%%%%%%%%%%%%%%%%%%%%%%%%%%%%%%%%%
%%%
%%%
%%%
%%%
%%%%%%%%%%%%%%%%%%%%%%%%%%%%%%%%%

\subsection{CSBC and Assumption (H1)}

In this subsection we provide a useful representation of the Hessian at a collinear $\sbc$ ($\csbc$ in shorthand notation) and we discuss Assumption~{\bf(H1)} that will be used throughout the paper.  

As a preliminary observation, we see from an easy inspection of Equation~\eqref{eq:s-balanced-conf} that the line containing each body $q_k$ of a $\csbc$  must lie in an eigenspace of the matrix $S$. % In fact, in the case of a $\csbc$ along a line  $L$, the force exerted is directed along the same line $L$; so, in order the line to be preserved by the vector-field defined in Equation~\eqref{eq:s-balanced-conf}, $L$ should be an eigenspace of $S$.
In particular, if the $s_j$'s are pairwise distinct, then the only lines containing $\csbc$ are the coordinate axis.  
 \begin{note}
 We can assume without loss of generality that each particle of the $\csbc$ $q$ is in the ``$j$-th eigenspace''  $\langle e_j\rangle$ of $S$, where as usual $e_j$ denotes the $j$-th vector of the canonical basis. Up to relabeling the $e_j$'s if necessary, we can assume that each body $q_k$ of the $\csbc$ $q$ lies on $\R\times (0)^{d-1}\subset \R^d$, and so the unit vectors $u_{ij}$ are all multiples of $e_1$. \qed
\end{note}
By a straightforward computation we get that each block $D_{ij}$ has the following representation
\[
D_{ij}= \dfrac{m_im_j}{|r_{ij}|^3}\begin{bmatrix}
-2 & 0 & \ldots & 0\\
0& 1 & \ldots & 0\\
\vdots & \vdots & \vdots & 0\\
0 & 0 & \ldots & 1	
\end{bmatrix}, \qquad D_{ii}= -\sum_{j \neq i } D_{ij}.
\]
After rearranging the  $dn$ coordinates of the configuration vector $q$ by collecting the components into groups of $n$ variables with all of the $e_1$ components first, the $e_2$ components second and so on, the Hessian at the   $\csbc$  is of the form
\[
H(q) =  \begin{bmatrix} -2 M^{-1} B(q) & & &  \\ &  M^{-1} B(q) & & \\ & & \ddots & \\ & & &  M^{-1} B(q)	
  \end{bmatrix}+
  \begin{bmatrix} s_j U(q) \Id_n & & &  \\ & s_1U(q)\Id_n & & \\ & & \ddots & \\ & & & U(q)\Id_n 
  \end{bmatrix}
\]
where  $B(q)$ is the $n \times n$ matrix whose $(i,j)$-entry is given by
\[
b_{ii}(q)=-\sum_{\substack{j=1\\ i \neq j}}^n \dfrac{m_im_j}{r_{ij}^3},\quad \quad b_{ij}(q)= \dfrac{m_im_j}{r_{ij}^3}.
\]
\begin{rem} \label{index:cc} 
We recall that the dimension of the configuration sphere is $N-1=d(n-1)-1= \dim T_q\mathbb S$ and so, for $d=2$ (planar case) we get $2n-3$, whereas for $d=3$ (spatial case), we get $3n-4$. In the particular case of $\cccc$ (corresponding  to $s_1=....=s_d=1$), the $(n \times n)$-block matrix $M^{-1} B(q)$ has Morse index $(n-1)$ (this result can be traced back to Conley) with largest eigenvalue $-U(q)$, and nullity $1$ with corresponding eigenvector $(1, \ldots, 1)$  (being transverse to the tangent space at $q$ to $\mathbb S$) which does not contribute to the inertia indices. Similarly, the block $-2M^{-1} B(q)$ has Morse coindex $(n-1)$ and nullity $1$, which once again does not contribute to the inertia indices. In particular, the inertia indices of the $\cccc$ $q$ are
\[
\big(\iMor{q}, \nullity{q}, \coiMor{q} \big)=\big((d-1)(n-2), d-1, n-2\big).
\]
Notice that $(d-1)(n-2)+(d-1) +(n-2)= d(n-1)-1= \dim T_q\mathbb S$. \qed
\end{rem}

Starting from Remark~\ref{index:cc}, in the next subsection we provide some interesting information about the inertia indices of $\csbc$ which will be then the starting point for the lower bounds for 
the number of $\sbc$ that we will prove in Section~\ref{sec:lower-bounds}. Hereafter, until the end of Section~\ref{sec:lower-bounds} we will always implicitly assume that 
\[
\text{{\bf(H1)\/}}  \qquad \qquad  \hspace{3cm} s_1> s_2> \ldots > s_{d-1}>s_d=1.\qquad \qquad \qquad \qquad \hspace{3cm}
\]
Such an assumption might appear at a first glance rather restrictive. However, it has several noteworthy advantages that we will exploit throughout the paper and that we now thoroughly discuss. 
Before commenting further on Assumption {\bf (H1)}, we shall notice that there is no loss of generality in assuming that the $s_i$'s be strictly decreasing and that $s_d =1$. This follows from the fact 
that we can always rearrange the vectors of the standard basis of $\R^d$ and that the problem is invariant under scaling. 

\begin{itemize}
\item Assumption {\bf(H1)} ``kills'' all symmetries of 
Equation~\eqref{eq:s-balanced-conf}, thus allowing to apply 
standard Morse theory arguments (rather than more complicated theories, such as e.g. equivariant Morse theory or Morse-Bott theory) when looking for lower bounds on the number of $\sbc$. 

\item The lower bounds on the number of $\sbc$ that one obtains assuming {\bf(H1)} still allow to give (possibly not optimal but still remarkable) lower bounds on the number of $\sbc$ in the general case via a 
suitable ``reduction to {\bf{(H1)}} argument'' as we now explain. Thus, suppose that for $d\ge 2$ positive real numbers $s_1>...>s_d=1$ and positive integers $\mu_1,\ldots,\mu_d\in \N$ are given. On $\R^D$, $D:= \mu_1+\ldots +\mu_d\ge d$, we consider the $S$-balanced configurations problem with 
\[
\widetilde S:= \diag \, (\underbrace{s_1,\ldots,s_1}_{\mu_1\text{-times}},\ldots,\underbrace{s_d,\ldots,s_d}_{\mu_d\text{-times}}).
\]
Notice that, under the assumptions above, Equation~\eqref{eq:s-balanced-conf} is invariant by the diagonal action on the configuration space of the subgroup of $\SO(D)$ given by the product group $\SO(\mu_1)\times ... \times \SO(\mu_d)$. For each $i=1,...,d$ we fix a line $L_i$ on the $\mu_i$-dimensional subspace of $\R^D$ relative to $s_i$, and restrict our attention to $\sbc$ which are contained in 
\[
Y:= \bigoplus_{i=1}^d L_i \cong \R^d.
\]
Ignoring all vanishing components of configuration vectors which lie in $Y$, we obtain that $\sbc$ which are contained in $Y$ satisfy Equation~\eqref{eq:s-balanced-conf} on $\R^d$ with 
$$S:= \text{diag} \, (s_1,...,s_d),$$
which is nothing else but {\bf{(H1)}} on a proper subspace of $\R^D$. We call this procedure the {\sc reduction to} {\bf{(H1)}} {\sc argument}. 

\item Applying equivariant theories for the $S$-balanced configurations problem in $\R^D$ seems hopeless in full generality due - among other facts - to the amount of cases that one should treat (corresponding to different choices of the $\mu_i$'s)  and to the fact that, for $d\ge 3$, the  diagonal action of the symmetry group $\SO(\mu_1)\times ... \times \SO(\mu_d)$ of Equation~\eqref{eq:s-balanced-conf} on the configuration space is not free, thus yielding a quotient which is not a manifold (actually, not even an orbifold). 
%Even worse, in many cases the quotient, is not even an orbifold but usually and Alexandrov space requiring a delicate stratified Morse theory to be defined. 

\item Theories like Morse-Bott theory, which allow to deal with functions having ``non-degenerate'' critical manifolds rather than critical points, do not take into account the symmetries 
of the problem and thus necessarily lead to weaker results. For instance, it is possible to show that for the $S$-balanced configurations problem on $\R^4$ with $S=\text{diag}(s_1,s_1,1,1)$,
the lower bounds on the number of $\sbc$ that one obtains via the reduction to {\bf{(H1)}} argument are precisely twice as much as the lower bounds that one obtains by Morse-Bott theory. 

\item All methods involving equivariant or stratified Morse theories are essentially based on equivariant homology and intersection homology theories, which are 
not computationally feasible in all cases. 

\item If the singular set consists only of collinear configurations, under suitable assumptions,  one could try to overcome these difficulties by quotienting out the configuration space by the group action after removing the singular set. We will say more about this in Section \ref{sec:homology}, where we  will compare the lower bounds on the number of $\sbc$ in $\R^4$ with $S=\text{diag}\, (s_1,s_1,1,1)$ obtained implementing such a strategy with the ones obtained via the reduction to {\bf{(H1)}} argument (thus, by considering the $S$-balanced configurations problem in $\R^2$ with matrix $\diag(s_1,1)$).

\item Even if one is able to give lower bounds on the number of $\sbc$ not assuming {\bf{(H1)}}, for instance via equivariant Morse theory or Morse-Bott theory,  one has additionally to show that the $\sbc$ that one finds are actually not $\cc$. Notice indeed that,
for $\sbc$ which are contained in a subspace of $\R^D$ corresponding to some $s_i$, Equation~\eqref{eq:s-balanced-conf} reduces to the central configurations equation. This is the same problem that one has to face when trying to exclude that spatial $\cc$ are actually not planar: despite the lower bounds provided by Pacella \cite{Pac86}, it is so far not known whether there is more than one spatial  $\cc$ which is not planar. 
%In a similar vein, all $\sbc$ that one finds using equivariant theory or Morse-Bott theory might actually be $\cc$. 

Assuming {\bf{(H1)}}, instead, we easily overcome this problem: indeed, under Assumption {\bf{(H1)}}, every solution of \eqref{eq:s-balanced-conf} which is not collinear cannot be a $\cc$. 

\item Under Assumption {\bf{(H1)}}, solutions of Equation~\eqref{eq:s-balanced-conf} yield relative equilibria for the $n$-body problem in $\R^{2d}$ via ``complexification'', which can be thought of as the inverse  procedure to the reduction to {\bf{(H1)}} argument. Indeed, $\sbc$ 
on $\R^d$ for $S=\text{diag}\, (s_1,s_2,...,s_{d-1},1)$ are in particular $\sbc$ in $\R^{2d}\cong \C^d$ with matrix $\text{diag}\, (s_1,s_1,s_2,s_2,...,s_{d-1},s_{d-1},1,1)$.
\end{itemize}

%%%%%%%%%%%%

\subsection{CSBC and corresponding CCC: spectra and inertia indices}
\label{subsec:inertiaindices}

In what follows, we refer to a $\csbc$ in which every particle  $q_k$ belongs to $\langle e_j\rangle $ (the eigenspace corresponding to the eigenvalue  $s_j$) just as $s_j-\csbc$. 	
In this subsection, after describing the relation between $s_j-\csbc$ and the corresponding $\cccc$, we provide an explicit description of their inertia indices. 

\begin{lem}\label{thm:cc-sbc-collinear}
Let $q$ be a $s$-$\csbc$, where  $s=s_j$ for some $j=1,...,d$. Then: 
\begin{itemize}
	\item $\widehat q\= \sqrt{s}\,q$ is a $\cccc$.
	\item $M^{-1} B(q)+ s U(q)I_n= s\sqrt{s}\left[M^{-1}B(\widehat q)+ U(\widehat q)I_n\right]$.
\end{itemize}
\end{lem}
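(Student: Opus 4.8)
\emph{Strategy.} The identity is an exercise in homogeneity, and the plan is to exploit the two scaling symmetries of the Newtonian problem together with the fact that a $\csbc$ contained in the $s_j$-eigenspace of $S$ sees the weight matrix $\widehat S$ merely as multiplication by the scalar $s=s_j$.

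First I would record the relevant scalings. Since every body $q_k$ of the $s$-$\csbc$ $q$ lies in $\langle e_j\rangle$, one has $\widehat S M q=s\,Mq$; hence the normalization $I_S(q)=\langle\widehat S Mq,q\rangle=1$ reads $I(q)=\mnorm{q}^2=1/s$, and Equation~\eqref{eq:s-balanced-conf} (with multiplier $\lambda=U(q)$, since $I_S(q)=1$) becomes
\[
\nabla U(q)=-\,s\,U(q)\,Mq.
\]
Next, $U$ is positively homogeneous of degree $-1$ (so $\nabla U$ of degree $-2$), while the $n\times n$ matrix $B(\cdot)$, whose entries are the $m_im_j\,r_{ij}^{-3}$, is positively homogeneous of degree $-3$ because $r_{ij}(\alpha q)=\alpha\,r_{ij}(q)$ for $\alpha>0$. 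In particular, writing $\widehat q=\sqrt s\,q$, we get $U(\widehat q)=s^{-1/2}U(q)$, $\nabla U(\widehat q)=s^{-1}\nabla U(q)$ and $B(\widehat q)=s^{-3/2}B(q)$.

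For the first bullet I would combine these relations: using the displayed equation and $q=\widehat q/\sqrt s$,
\[
\nabla U(\widehat q)=s^{-1}\nabla U(q)=-\,U(q)\,Mq=-\big(\sqrt s\,U(\widehat q)\big)\,M\!\left(\tfrac{1}{\sqrt s}\,\widehat q\right)=-\,U(\widehat q)\,M\widehat q,
\]
so $M^{-1}\nabla U(\widehat q)+U(\widehat q)\,\widehat q=0$; since moreover $I(\widehat q)=s\,I(q)=1$, this is exactly the central configuration equation~\eqref{eq:cc-eq-2} (with multiplier $U(\widehat q)$) for a normalized collinear configuration, i.e. $\widehat q$ is a $\cccc$. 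For the second bullet I would just substitute $B(q)=s^{3/2}B(\widehat q)$ and $U(q)=s^{1/2}U(\widehat q)$:
\[
M^{-1}B(q)+s\,U(q)\,I_n=s^{3/2}\,M^{-1}B(\widehat q)+s^{3/2}\,U(\widehat q)\,I_n=s\sqrt s\,\big[M^{-1}B(\widehat q)+U(\widehat q)\,I_n\big].
\]

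I do not expect any genuine obstacle here: the whole content is bookkeeping of scaling exponents. The one point deserving a line of care is the interplay between the $S$-weighting and the normalization, namely that $I_S(q)=1$ forces $I(q)=1/s$, so that the rescaling which lands $\widehat q$ on the central-configuration sphere is precisely by the factor $\sqrt s$ (and not by some other power of $s$); once this is fixed, both bullets follow mechanically from the degrees $-1$ and $-3$ homogeneities of $U$ and $B$.
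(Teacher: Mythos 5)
Your proposal is correct and follows essentially the same route as the paper: rescale by $\sqrt{s}$, use the homogeneities of $U$, $\nabla U$ and $B$ (degrees $-1$, $-2$, $-3$), and check that $I_S(q)=1$ forces $\mnorm{\sqrt{s}\,q}=1$ so that $\widehat q$ is a normalized $\cccc$. Your write-up merely spells out the normalization step and the second bullet slightly more explicitly than the paper does.
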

\begin{proof}
%In the (1D) collinear case the symmetric matrix $S $ is just the scalar $s$; so,  we get that  $S= s \Id_n$.
 By assumption, the $\csbc$ $q$ lies in the eigenspace of $S$ corresponding to the eigenvalue $s$ and  $\mnorm{\sqrt{s}\, q}=1$. Thus, by a direct computation, we get 
\[
\nabla U(\sqrt{s}\cdot q ) = s^{-1} \nabla U(q) = - s^{-1} s U(q) M  q  = -U(q) M q = -U(\sqrt{s}\cdot q) M   \sqrt{s} \cdot q .
\]
Setting $\widehat q\= \sqrt{s}\cdot q$, we obtain $\nabla U(\widehat q)=-U(\widehat q)M \widehat q$ which implies that $\widehat q$ is a $\cccc$. 
%As already observed, if $q$ is a   collinear $\sbc$ then $\widehat q\= \sqrt{s}\cdot q$ is a   collinear central configuration.
The homogeneity of $U$ implies that 
\[
B(q) = s \sqrt{s}\cdot B(\widehat q)
\]
and hence the eigenvalues of $B(q)$ are easily obtained by
multiplying each eigenvalue of $B(\widehat q)$ by $s\sqrt{s}$.
\end{proof}
\begin{rem}
As we shall see below, from Lemma~\ref{thm:cc-sbc-collinear} it follows that the nullity of $q$ is generically zero, but might be non-zero for certain choices of the $s_i$'s. Moreover, the coindex of a $\csbc$ can possibly be greater than the coindex of the corresponding $\cccc$, as it could increase (but not decrease) for varying $s$.  Analogously, the  index of a $\csbc$ can possibly be lower than the index of the corresponding $\cccc$, as it could decrease (but not increase) for varying $s$. Loosely speaking, since the eigenvalues of $S$ are assumed to be greater than $1$,  we get that adding to the block matrix $M^{-1}B(q)$ a block diagonal matrix of the form $s_i U(q)\Id_n$, we can move part of the spectrum located on the negative real line to the positive thus changing the inertia indices. 
 \end{rem}
Another important property which is pointed out by Lemma~\ref{thm:morse-index-co-balanced} is the different role (with respect to the inertia indices of a $\csbc$) played by the different eigenspaces. In fact, for an $s-\csbc$  the  index is maximal, and the coindex is minimal, for $s=s_1$.
\begin{lem}\label{thm:morse-index-co-balanced}
Let $q$ be a $\csbc$. The following facts hold.
\begin{enumerate}
	\item The index of $q$ is at most $(d-1)(n-1)$ and the  coindex of $q$ is at least $(n-2)$
\item If $q$ is an $s_1-\csbc$, then the  Morse index is precisely $(d-1)(n-1)$ and the Morse coindex is precisely $n-2$. Furthermore, if $q$ is an $s_j-\csbc$ and  $j<d$, then $q$ cannot
be a local minimum of $\widehat U$.
\end{enumerate}
\end{lem}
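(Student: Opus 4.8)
The plan is to combine the explicit block form of the Hessian at a $\csbc$ (displayed just before Remark~\ref{index:cc}) with the scaling identity of Lemma~\ref{thm:cc-sbc-collinear} and the spectral information on $\cccc$ recalled in Remark~\ref{index:cc}. Let $q$ be an $s_j$-$\csbc$, set $s:=s_j$, and --- as permitted by the Notation above --- assume $q$ lies on the first coordinate axis. After the coordinate rearrangement of that paragraph, $H(q)$ is the direct sum of one \emph{collinear block} $-2M^{-1}B(q)+sU(q)I_n$ and $d-1$ \emph{transverse blocks} $M^{-1}B(q)+s_iU(q)I_n$, $i\in\{1,\dots,d\}\setminus\{j\}$. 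The first step is to pass from $H(q)$ to the Hessian of $\widehat U$ on $T_q\mathbb S$. The centre-of-mass condition restricts each of these $n\times n$ blocks to the mass-hyperplane $W:=\{x\in\R^n:\sum_i m_ix_i=0\}$ --- the vector $(1,\dots,1)$ spans $\ker M^{-1}B(q)$, hence is an eigenvector of every block and gets deleted --- while the constraint cutting out $T_q\mathbb S$ only involves the collinear group of coordinates, because $\widehat S M q$ is supported in the collinear block when $q$ sits on that axis; so it deletes one further direction, from the collinear block only. Thus the Hessian of $\widehat U$ on $T_q\mathbb S$ is the $\mmscal$-orthogonal direct sum of the collinear block on a subspace of dimension $n-2$ and the $d-1$ transverse blocks on $W$; the dimensions add up, $(n-2)+(d-1)(n-1)=d(n-1)-1=\dim T_q\mathbb S$, and the inertia indices of $q$ are the sums of those of the $d$ pieces.

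For (1): since $B(q)$ is minus a weighted graph Laplacian, $M^{-1}B(q)$ is negative semidefinite with kernel $(1,\dots,1)$, so its eigenvalues $\nu_1\le\cdots\le\nu_{n-1}$ on $W$ are strictly negative; hence $-2M^{-1}B(q)+sU(q)I_n$ has on $W$ the strictly positive eigenvalues $-2\nu_i+sU(q)$ and is positive definite there. A positive definite form stays positive definite on subspaces, so the collinear piece is positive definite of dimension $n-2$, which gives $\coiMor{q}\ge n-2$. As $\iMor{q}+\nullity{q}+\coiMor{q}=\dim T_q\mathbb S=(d-1)(n-1)+(n-2)$, this forces $\iMor{q}\le (d-1)(n-1)$, proving (1).

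For (2): by Lemma~\ref{thm:cc-sbc-collinear}, $\widehat q:=\sqrt s\,q$ is a $\cccc$ with $M^{-1}B(q)=s\sqrt s\,M^{-1}B(\widehat q)$ and $U(q)=\sqrt s\,U(\widehat q)$. By Remark~\ref{index:cc} the largest negative eigenvalue of $M^{-1}B(\widehat q)$ is $-U(\widehat q)$, so after rescaling the largest eigenvalue of $M^{-1}B(q)$ on $W$ is $\nu_{n-1}=-sU(q)$; therefore each transverse block $M^{-1}B(q)+s_iU(q)I_n$ has largest eigenvalue $(s_i-s)U(q)$. If $q$ is an $s_1$-$\csbc$, then $s=s_1>s_i$ for every transverse eigenvalue $s_i$ by \textbf{(H1)}, so every transverse block is negative definite (index $n-1$, zero nullity); together with the collinear block (coindex $n-2$, zero nullity) this yields $\iMor{q}=(d-1)(n-1)$, $\coiMor{q}=n-2$ (and $\nullity{q}=0$). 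If instead $q$ is an $s_j$-$\csbc$ with $j<d$, then $s_d=1$ occurs among the transverse eigenvalues and, since $s=s_j>1$ by \textbf{(H1)}, the associated transverse block has largest eigenvalue $(1-s_j)U(q)<0$, hence is negative definite; so $\iMor{q}\ge n-1\ge 1$ and $q$ cannot be a local minimum of $\widehat U$.

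The step I expect to need the most care is the reduction in the first paragraph: one has to verify that the centre-of-mass condition and the tangency to $\mathbb S$ are compatible with the block decomposition, so that the quadratic form on $T_q\mathbb S$ genuinely decomposes as an orthogonal direct sum of the $d$ pieces and the inertia indices simply add. This hinges entirely on the observation that, for $q$ collinear on the first axis, $\widehat S M q$ has components only in the collinear block. Granting this and Remark~\ref{index:cc}, everything else is the elementary eigenvalue count above.
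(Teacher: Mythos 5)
Your proposal is correct and follows essentially the same route as the paper: it combines the block structure of the Hessian at a collinear configuration with Lemma~\ref{thm:cc-sbc-collinear} and the spectral facts of Remark~\ref{index:cc}, the shift $(s_i-s_j)U(q)$ of the top eigenvalue of each transverse block being exactly the quantity $U(\widehat q)(s_i-s_j)/s_j$ (up to the factor $s_j\sqrt{s_j}$) that the paper tracks. The only differences are cosmetic: you spell out the reduction to $T_q\mathbb S$ and give an explicit graph-Laplacian argument for part (1), which the paper dismisses as straightforward, and in the $j<d$ case you get negative definiteness of the $s_d$-block where the paper only records that the nullity direction moves into the index.
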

\begin{proof}
We assume that $q$ is a $s_j$-$\csbc$ for some $j\in \{1,\ldots , d\}$. By using Lemma~\ref{thm:cc-sbc-collinear},  for  $i \neq j$  we get that
\begin{align*}
M ^{-1}B(q) + s_i U(q)I_n &= s_j \sqrt{s_j}\left[M ^{-1}B (\widehat q)+ \left(\dfrac{s_i}{s_j}\right) U(\widehat q) I_n\right]\\ 
&=s_j \sqrt{s_j}\left[M ^{-1}B (\widehat q)+ U(\widehat q)I_n+ U(\widehat q)\left(\dfrac{s_i-s_j}{s_j}\right)I_n\right]
%\quad \textrm{ and }\\
%-2M ^{-1}B(q) + s_i U(q)= s_j \sqrt{s_j}\left[-2M ^{-1}B (\widehat q)+ U(\widehat q)+ U(\widehat q)\left(\dfrac{s_i-s_j}{s_j}\right)\right]
%\dfrac{s_i}{s_j} U(\widehat q)\right] + (s_i-s_1)U(q)\\
%=s_1\sqrt{s_1}\left[M ^{-1}B(\widehat q)+ U(\widehat q)\right] + (s_i-s_1) U(\widehat q)
\end{align*}
where $\widehat q$ is the $\cccc$ corresponding to $q$.
Depending on $i$ and $j$, the quantity 
\[
\left(\dfrac{s_i-s_j}{s_j}\right)
\] 
can be positive or negative. For $j=1$ (corresponding to one of the two extremal cases), this term is always negative (i.e. independent of $i$). Thus, also the nullity of the central configuration $\widehat q$ contributes to the Morse index of the $s_1$-$\csbc$ (in fact, the contribution is $1$ for each of the $(d-1)$ blocks). The contribution to the coindex coming from the first block of the Hessian is $n-2$. Arguing analogously, we see that for $j>1$ the Morse index is strictly less than $(d-1)(n-1)$, whereas the coindex is strictly greater than $n-2$. This follows from the fact that, for $i<j$, the above quantity is strictly positive and hence the nullity of the $\cc$ $\widehat q$ contributes for such $i$'s to the Morse coindex. Finally, for $j<d$, the above quantity for $i=d$ is strictly negative, thus the nullity of $\widehat q$ 
contributes to the Morse index. 
\end{proof}
The next result describes the properties of a $\csbc$ in terms of the spectra of the corresponding $\cccc$ (or $s_d-\csbc$ which is the same) as well as of the eigenvalues of $S$. 
\begin{lem}\label{thm:balanced-central-index-relation}
Let $\widehat q$ be an $s_d-\csbc$, and let
\[
\eta\= \textrm{ smallest eigenvalue of } M ^{-1}B(\widehat q).
\]
Then:
\begin{itemize}
	\item $\widehat q$ is a local minimum of $\widehat U$ provided that $\eta + s_{d-1} U(\widehat q)>0$
	\item $\widehat q$ has non-vanishing Morse index if and only if $\eta + s_{d-1} U(\widehat   q)<0 $. 	
	\item Suppose $\eta + s_1 U(\widehat q) <0$. For every $j=1,...,d$, let $\widehat q_j$ be the $s_j$-$\csbc$ whose ordering of the masses coincides with the one of $\widehat q$. Then, for any coordinate subspace\footnote{This means $E=\text{span}\, \{e_{i_1},...,e_{i_h}\}$ for some $h\ge 1$ and $i_1,...,i_h$ pairwise distinct.}  
	$E\subset \R^d$ such that $\langle e_j \rangle \subset E$, $\dim E\ge 2$, we have that  $\widehat q_j$ 
	is not a local minimum of $\widehat U|_{\widehat{\mathbb S} \cap E^n}.$
\end{itemize}
\end{lem}
%\begin{rmk}
%It is worth noticing that, if $s_d \neq 1$, then the first spectral condition should be replaced by the analogous  condition $s_d\eta + s_{d-1} U(\widehat  q)>0 $ whilst the second by $s_d\eta + s_1 U(\widehat  q)<0 $.   Furthermore, we notice also  that under the  second condition part of the spectrum of  $ M ^{-1} B(\widehat q)$ is negative, thus giving a  non-trivial contribution to the Morse index. It is interesting to observe that it can be shown that $\eta +s_1U(\widehat q)<0$ iff $q$ has non-vanishing Morse index. 
%\end{rmk}

\begin{proof}
For $i<d$, the smallest eigenvalue of the $i$-th block of the Hessian
\begin{align*}
M ^{-1} B(\widehat q)+ s_i U(\widehat q) I_n
\end{align*}
is given by 
$$\eta + s_i U(\widehat q).$$ 
The first two claims readily follows. As far as the last claim is concerned, we observe that for the $s_j-\csbc$ $\widehat q_j$ whose ordering of the masses coincides with the one of $\widehat q$ and for every $i \neq j$ we have that the corresponding block of the Hessian of $U$ at $\widehat q_j$ is given by
\[
M ^{-1} B(\widehat q_j) + s_i U(\widehat q_j)I_n  =  s_j \sqrt s_j \left[M ^{-1}	B(\widehat q)+ \left(\dfrac{s_i}{s_j}\right) U(\widehat q)I_n\right].
\]
In particular, the smallest eigenvalue of $M ^{-1} B(\widehat q_j) + s_i U(\widehat q_j)I_n$ is (up to the positive factor $s_j\sqrt{s_j}$) given by
\[
\eta + \dfrac{s_i}{s_j} U(\widehat q).
\]
We now estimate 
\[\eta + \dfrac{s_i}{s_j} U(\widehat q) <\eta + \dfrac{s_1}{s_j} U(\widehat q)<\eta + \dfrac{s_1}{s_d}U(\widehat q)=\eta + s_1U(\widehat q)<0, 
\]
thus showing that the considered block has at least one negative eigenvalue. The claim readily follows.
\end{proof}

Clearly, the value of $U(\widehat q)$, as well as the matrix $B(\widehat q)$ and its eigenvalues, depend on the choice of the $\cccc$ (i.e. on the choice of the ordering of the masses), unless all masses are equal. Therefore, the conditions above are a priori different 
for different ordering of the masses, and there is in general no direct way of comparing the inertia indices of $\csbc$ with different ordering of the masses. Nevertheless, we can still use this information 
to deduce existence of non-collinear $\sbc$ as we now show. 

\begin{cor}
\label{cor:continua}
Under the notation above, if all masses are equal and
$$1< s_1 < - \frac{\eta}{U(\widehat q)},$$ then there are at least $\frac{d(d-1)}{2}$ planar non-collinear $\sbc$. In particular, the $n$-body problem in $\R^{2d}\cong \C^d$ has at least $\frac{d(d-1)}{2}$ continua of relative equilibria in which periodic motions form a dense subset. If the masses are not all equal, then the same conclusion holds provided one replaces the spectral condition above with 
$$1<s_1 < \min \Big \{  - \frac{\eta}{U(\widehat q)} \ \Big | \ \widehat q \ 	\ \cccc\Big \}.$$
\end{cor}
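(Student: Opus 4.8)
The plan is to exhibit, for each of the $\binom{d}{2}=\tfrac{d(d-1)}{2}$ coordinate $2$-planes $P_{ij}:=\mathrm{span}\{e_i,e_j\}$ with $1\le i<j\le d$, one non-collinear planar $\sbc$ contained in $P_{ij}$, obtained as a minimizer of $\widehat U$ over the collision-free configuration sphere of the planar problem induced on $P_{ij}$, and then to check that the resulting configurations are pairwise distinct. The crux will be to show that the minimizer is non-collinear, and this is exactly where the spectral hypothesis enters.

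First I would set up the restricted problem. A configuration with all bodies in $P_{ij}$ has $\nabla U$ again in $P_{ij}$, and $\widehat S Mq$ lies in $P_{ij}$ too since $S$ is diagonal; hence Equation~\eqref{eq:s-balanced-conf} closes up, and a $\sbc$ of the planar problem on $P_{ij}\cong\R^2$ with matrix $\diag(s_i,s_j)$ is automatically a $\sbc$ of the original problem on $\R^d$. Let $\widehat{\mathbb S}_{ij}$ denote the collision-free configuration sphere of this planar problem; it is the complement of the collision set in the compact sphere $\mathbb S_{ij}$, and since $U\to+\infty$ at collisions, $\widehat U|_{\widehat{\mathbb S}_{ij}}$ is proper and bounded below, hence attains its infimum at some $q_{ij}$, which by Lemma~\ref{thm:s-balanced-critical-points} (applied in dimension two) is a $\sbc$.

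Next I would show $q_{ij}$ is non-collinear. By the observation that each body of a $\csbc$ must lie on an eigenline of $S$, and since $s_i\neq s_j$, the only collinear $\sbc$ contained in $P_{ij}$ are the $s_i$-$\csbc$'s (all bodies on $\langle e_i\rangle$) and the $s_j$-$\csbc$'s (all bodies on $\langle e_j\rangle$), for the various orderings of the masses. The hypothesis $1<s_1<-\eta/U(\widehat q)$ — respectively $1<s_1<\min\{-\eta/U(\widehat q)\}$ over all $\cccc$ $\widehat q$ — rewrites, since $U(\widehat q)>0$, as $\eta+s_1U(\widehat q)<0$ for every $\cccc$ $\widehat q$; so the third bullet of Lemma~\ref{thm:balanced-central-index-relation}, applied with the subspace $P_{ij}=\mathrm{span}\{e_i,e_j\}$ (the case $h=1$), gives that every $s_i$-$\csbc$ and every $s_j$-$\csbc$ has non-vanishing Morse index for $\widehat U|_{\widehat{\mathbb S}_{ij}}$. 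Since $q_{ij}$ is a local minimum of $\widehat U|_{\widehat{\mathbb S}_{ij}}$ its Morse index vanishes, so it cannot be one of these $\csbc$'s; hence it is non-collinear. It is then not even a $\cc$: from $\nabla U(q_{ij})=-\mu Mq_{ij}$ and $\nabla U(q_{ij})=-\lambda\widehat SMq_{ij}$ together with $s_i\neq s_j$, one would force all bodies onto a single coordinate axis, contradicting non-collinearity.

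Finally I would check distinctness and deduce the consequence for relative equilibria. If $\{i,j\}\neq\{k,l\}$ then $P_{ij}\cap P_{kl}$ lies in a coordinate line, so a non-collinear configuration contained in $P_{ij}$ cannot be contained in $P_{kl}$; thus the $q_{ij}$ are $\tfrac{d(d-1)}{2}$ distinct non-collinear planar $\sbc$. For the last assertion, each $q_{ij}\in P_{ij}\cong\R^2$ is, via complexification, a $\sbc$ in $\R^4\cong\C^2$ for the matrix $\diag(s_i,s_i,s_j,s_j)=-A^2$ with $A$ antisymmetric, hence generates (as recalled in Section~\ref{sec:inertia-balanced}) a relative equilibrium of the $n$-body problem in $\R^{2d}\cong\C^d$ rotating in two orthogonal planes with frequency ratio $\sqrt{s_i/s_j}$; letting the overall scale vary produces a continuum of such motions, and since $s_1$ — hence the admissible frequency ratios — varies over an interval, the periodic ones (rational ratio) form a dense subset. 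I expect the only genuinely delicate point to be the one already flagged: ruling out collinearity must be done for all orderings of the masses simultaneously, which is precisely why the unequal-mass case requires the minimum of $-\eta/U(\widehat q)$ over all collinear central configurations.
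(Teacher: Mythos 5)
Your construction of the $\frac{d(d-1)}{2}$ planar non-collinear $\sbc$ is correct and is essentially the paper's own argument: restrict to each coordinate plane $\mathrm{span}\{e_i,e_j\}$ (which is invariant, so solutions of the restricted equation with $\diag(s_i,s_j)$ are genuine $\sbc$ in $\R^d$), take the global minimum of $\widehat U$ there, and use the third item of Lemma~\ref{thm:balanced-central-index-relation} -- whose hypothesis is exactly the rewriting $\eta+s_1U(\widehat q)<0$ for every $\cccc$ $\widehat q$ -- to exclude that the minimum is attained at an $s_i$- or $s_j$-$\csbc$. Your explicit checks of distinctness across planes and of the fact that the minimizers are not $\cc$ are correct (if not strictly needed) additions.

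The genuine gap is in the continua/density assertion. You produce the continuum ``by letting the overall scale vary'', but a scaling family has a \emph{fixed} frequency ratio $\sqrt{s_i/s_j}$: along such a family the motions are either all periodic or all quasi-periodic, so periodic motions cannot be dense in it for a generic fixed $S$. The paper instead parameterizes the continuum by the spectral parameters themselves: on the set $\mathcal S=\bigl\{(s_1,\dots,s_{d-1})\ \bigl|\ -\eta/U(\widehat q)>s_1>\dots>s_{d-1}\bigr.\bigr\}$ the minimizer of $\widehat U|_{\mathrm{span}\{e_i,e_j\}}$ depends continuously on $(s_1,\dots,s_{d-1})$ by the stability property of minima, and it is this $s$-parameterized family that constitutes the continuum, with periodic motions corresponding to the dense set of parameters whose associated frequencies are rationally dependent. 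You do gesture at the right ingredient (``since $s_1$ varies over an interval, the rational ratios are dense''), but without the continuity-in-$s$ of the minimizers you have not shown that the resulting set of relative equilibria is a \emph{continuum} in which the periodic ones are dense; replacing the scale-variation step by the paper's stability-of-minima argument closes the gap.
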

\begin{proof}
For $1\le i <j\le d$ fixed, we consider the $(e_i,e_j)$-plane in $\R^d$. If all masses are equal, then Lemma~\ref{thm:balanced-central-index-relation} implies that the global minimum of $\widehat U$ restricted to $\text{span}\, \{e_i,e_j\}$
cannot be attained at a $\csbc$, and hence must be attained at a planar non-collinear $\sbc$. The first part of the claim follows, since 
the cardinality of the set $\{(i,j) \ |\ 1\le i<j\le d\}$
is precisely $\frac{d(d-1)}{2}$. We now set 
$$\mathcal S := \left \{(s_1,...,s_{d-1}) \ \Big |\ -\dfrac{\eta}{U(\widehat q)}>s_1> s_2> ... >s_{d-1}\right \}.$$ 
The stability property of minima implies that the maps
$$\mathcal S \ni (s_1,...,s_{d-1}) \mapsto \min \widehat U|_{\text{span}\, \{e_i,e_j\}},\quad 1\le i<j\le d,$$
yield the desired continua of relative equilibria. Such relative equilibria will be periodic motions whenever $s_1,...,s_{d-1},1$ are rationally dependent, and quasi-periodic motions otherwise.
This completes the proof in the case all masses are equal. The proof is analogous in the case of masses which are not all equal. Indeed, the new spectral gap condition implies that none of the $\csbc$ 
can be a local minimum for the restriction of $\widehat U$ to $\text{span}\, \{e_i,e_j\}$ as well. The details are left to the reader. 
\end{proof}

\begin{rmk}
Corollary~\ref{cor:continua} holds for any choices of the masses $m_1,...,m_n$, that is, independently of the fact that $\sbc$ are degenerate or not. For $n=3$ and $m_1=m_2\neq m_3$,  one retrieves the quasi-periodic motions corresponding to isosceles triangles in a prescribed $(e_i,e_j)$-plane. (Cfr. \cite[Pag.125]{Moe14}, for further details).

Actually, by using similar arguments, it is possible to prove the existence of others continua of relative equilibria coming from minmax points (that must exist for topological reasons).  
  \qed
\end{rmk}

\begin{rem}
To any planar non-collinear $\cc$ we can associate two integers, the so-called planar index and normal index. As shown by Moeckel in \cite[Pag. 69]{Moe94} (by  using an argument originally due to Pacella) estimating  from below the trace of $-M^{-1} B(q)$ at such a planar non-collinear $\cc$, it is possible to provide a lower bound on the normal index of a central configuration. A direct consequence of this estimate is that the global minimum of $\widehat U$ doesn't occur at a planar $\cc$, and so proving  the existence of at least one non-planar $\cc$.  It would be interesting to understand if an analogous argument can be carried over to the case of planar non-collinear $\sbc$.\qed
\end{rem}
We conclude this section by determining  the inertia indices of  $\csbc$ in case $d=2$. Thus, let $S=\diag\, (s_1,1)$ with $s_1>1$. By Lemma~\ref{thm:morse-index-co-balanced} we get that the inertia indices of a $s_1-\csbc$, denoted  $q$, are 
\[
\nullity{q} = 0,\quad \coiMor{q} = n-2,\quad \iMor{q} =n-1,
\]
whereas for a $s_2-\csbc$ (which are nothing else but $\cccc$, thus denoted $\widehat q$) we have
\[
\nullity{\widehat q} \ge 0,\quad \coiMor{\widehat q} \ge n-2,\quad \iMor{\widehat q} \le n-1.
\]
In the next result we provide a complete characterization of the inertia indices of $s_2-\csbc$ depending on $s_1$ and on the spectrum of $M^{-1}B$ at $\widehat q$. Thus, let $\widehat q$ be a $s_2-\csbc$ (or, equivalently, a $\cccc$), and  denote by
\[
\eta_k(\widehat q) < ... < \eta_1(\widehat q) < -U( \widehat q) < 0
\]
the eigenvalues of the matrix $M ^{-1}B(\widehat q)$. Even though the $\eta_i$'s depend on the choice of $\widehat q$, to ease the notation we hereafter drop the dependence on $\widehat q$. 
It is well known that the eigenvalues $-U(\widehat q)$ and $0$ are simple, but any other eigenvalue might have multiplicity. For $j=1,\ldots,k$ we denote by $\alpha_j$ the multiplicity of $\eta_j$ as an eigenvalue of $M^{-1}B(\widehat q)$.  We also set $\eta_0:= -U(\widehat q)$ and $\alpha_0:=1$. 
\begin{prop}\label{lem:indicescollinear}
Let $\widehat q$ be an $s_2-\csbc$. Then, the following hold.
\begin{enumerate}
\item  If $-\eta_j < s_1 U(\widehat q) < -\eta_{j+1}$ for some $j\in \{0,...,k-1\}$,  then 
\[
\nullity{\widehat q} = 0,\quad \coiMor{\widehat q} = n-2 + \sum_{i=0}^j \alpha_i,\quad \iMor{\widehat q} =n-1-\sum_{i=0}^j \alpha_i.
\]
\item If $s_1 U( \widehat q) = -\eta_j$ for some $j \in \{1,...,k\}$,  then 
\[\nullity{\widehat q} = \alpha_j,\quad\coiMor{\widehat q} = n-2 + \sum_{i=0}^{j-1} \alpha_i,\quad \iMor{\widehat q} =n-1-\sum_{i=0}^j \alpha_i.
\]
In particular, in this case, $\widehat q$ is a degenerate critical point of $\widehat U$. 
\item If $ s_1 U( \widehat q) +\eta_k>0$, then 
\[
\nullity{\widehat q} = 0,\quad\coiMor{\widehat q} = 2n-3,\quad\iMor{\widehat q} =0.
\]
In particular, in this case, $\widehat q$ is a local minimum of $\widehat U$.
\end{enumerate}
\end{prop}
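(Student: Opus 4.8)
The plan is to carry out the eigenvalue count directly from the block form of the Hessian established in the preceding subsection, specialized to $d=2$ with $S=\diag(s_1,1)$. First I would write $H(\widehat q)$ as a $2n\times 2n$ matrix in the coordinates of $\widehat{\mathbb S}$ grouped by the two axes of $\R^2$. Since $\widehat q$ is an $s_2$-$\csbc$, all of its particles lie on the axis belonging to the eigenvalue $s_2=1$, so $H(\widehat q)$ is block diagonal with a ``longitudinal'' block (along that axis) equal to $-2M^{-1}B(\widehat q)+U(\widehat q)I_n$ and a ``transverse'' block equal to $M^{-1}B(\widehat q)+s_1U(\widehat q)I_n$, both self-adjoint with respect to the mass scalar product on $\R^n$. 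The whole statement will then follow by counting the signs of the eigenvalues of these two blocks, restricted to $T_{\widehat q}\widehat{\mathbb S}$.

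The next step is to pin down $T_{\widehat q}\widehat{\mathbb S}$ inside $\R^{2n}$ relative to this splitting. The center-of-mass constraint removes the direction $\uno=(1,\dots,1)$ in \emph{each} of the two blocks, while the constraint $I_S=1$ removes, inside the longitudinal block only (because $\widehat q$ is supported there), the direction $\widehat q_{\parallel}$ of the longitudinal components of $\widehat q$. Both of these directions are eigenvectors of $M^{-1}B(\widehat q)$: the vector $\uno$ with eigenvalue $0$, since the rows of $B(\widehat q)$ sum to zero, and $\widehat q_{\parallel}$ with eigenvalue $\eta_0=-U(\widehat q)$, which follows by combining the $\cccc$ equation with the homogeneity of $\nabla U$. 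Since $M^{-1}B(\widehat q)$ is self-adjoint and its eigenvalues $\eta_0$ and $0$ are simple (and $M^{-1}B(\widehat q)$ has index $n-1$, by Remark~\ref{index:cc}), the part of $T_{\widehat q}\widehat{\mathbb S}$ in the longitudinal block is exactly the sum of the $\eta_1,\dots,\eta_k$-eigenspaces, of dimension $n-2$, while the part in the transverse block is the orthogonal complement of $\uno$ there, i.e.\ the sum of the $\eta_0,\dots,\eta_k$-eigenspaces, of dimension $n-1=\sum_{i=0}^k\alpha_i$.

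Now the sign count is immediate. On the longitudinal part, $-2M^{-1}B(\widehat q)+U(\widehat q)I_n$ acts on the $\eta_j$-eigenspace as multiplication by $-2\eta_j+U(\widehat q)$, which is strictly positive for every $j\geq 1$ because $\eta_j<0$; hence this block contributes $n-2$ to the coindex and nothing to the index or the nullity, \emph{regardless of} $s_1$. On the transverse part, $M^{-1}B(\widehat q)+s_1U(\widehat q)I_n$ acts on the $\eta_j$-eigenspace as multiplication by $\eta_j+s_1U(\widehat q)$, whose sign is dictated by the position of $s_1U(\widehat q)$ with respect to the strictly increasing sequence $-\eta_0<-\eta_1<\dots<-\eta_k$. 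If $-\eta_j<s_1U(\widehat q)<-\eta_{j+1}$ (case (1)), the eigenvalues for $i=0,\dots,j$ are positive and those for $i=j+1,\dots,k$ negative, so the transverse block contributes $\sum_{i=0}^j\alpha_i$ to the coindex and $n-1-\sum_{i=0}^j\alpha_i$ to the index; if $s_1U(\widehat q)=-\eta_j$ (case (2)), the $\eta_j$-eigenspace moves into the kernel, adding $\alpha_j$ to the nullity; if $s_1U(\widehat q)>-\eta_k$ (case (3)), all $n-1$ of these eigenvalues are positive. Summing the contributions of the two blocks reproduces exactly the three triples in the statement; in particular, in case (3) the coindex equals $(n-2)+(n-1)=2n-3=\dim T_{\widehat q}\widehat{\mathbb S}$, so $\widehat q$ is a nondegenerate local minimum of $\widehat U$, whereas in case (2) the nonzero nullity $\alpha_j$ shows that $\widehat q$ is degenerate.

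I do not expect a genuine obstacle: once the block form of the Hessian and the spectral picture of $M^{-1}B(\widehat q)$ from Remark~\ref{index:cc} are available, the argument is entirely linear algebra. The one point deserving care is the bookkeeping of the two constrained directions — verifying that the center-of-mass constraint and the constraint $I_S=1$ cut out precisely $\uno$ (in each block) and $\widehat q_{\parallel}$ (in the longitudinal block), that these are honest eigenvectors of $M^{-1}B(\widehat q)$, and that after the shifts $+U(\widehat q)I_n$ and $+s_1U(\widehat q)I_n$ they carry strictly positive eigenvalues, so that discarding them leaves the inertia indices unaffected.
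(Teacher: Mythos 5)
Your proof is correct and takes essentially the same route as the paper's: restrict the block-diagonal Hessian at the collinear configuration to the tangent space and count the signs of the shifted eigenvalues $\eta_i+s_1U(\widehat q)$ in the transverse block, the longitudinal block contributing a fixed $n-2$ to the coindex. The only difference is that you spell out the tangent-space bookkeeping (the $\uno$ and $\widehat q_{\parallel}$ eigendirections) that the paper delegates to Remark~\ref{index:cc} and Lemma~\ref{thm:morse-index-co-balanced}.
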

\begin{proof}
	In order to prove  the first claim we start observing that by adding  the term $s_1 U(\widehat q)\Id_n$ to the matrix $M ^{-1} B(\widehat q)$ part of the spectrum becomes positive. Since the assumption is equivalent to $\eta_{j+1} < -s_1 U(\widehat q) < \eta_j$, we get
	\[
	\eta_{j+1} +s_1 U(\widehat q) <0<\eta_j +s_1U(\widehat q).
	\]
	So, to the Morse coindex of $q$ we add the contribution provided by the first $(j+1)$ eigenvalues of $M ^{-1} B(\widehat q)$ which become positive by adding $s_1 U(\widehat q)$. Clearly, the same contribution has to be subtracted from the Morse index. 
The proof of the second item is completely analogous. We just need to observe that the $(j+1)$-th eigenvalue $\eta_j$ of $M ^{-1} B(\widehat q)$ gives a non-trivial contribution to the nullity. The third item follows straightforwardly since, under the assumption $ s_1 U(\widehat q) +\eta_k>0$, the whole spectrum of $M ^{-1} B(\widehat q)$ moves into the positive real axis. 
	\end{proof}
\begin{rmk}
As a direct consequence of Proposition~\ref{lem:indicescollinear}, we get that there are only finitely many values of $s_1$ for which $s_2-\csbc$ can be degenerate as critical points of $\widehat U$.
In other words, $s_2-\csbc$ are generically non-degenerate.
We shall however remark that the values of $s_1$ that make an $s_2-\csbc$ degenerate depend on the $s_2-\csbc$, unless the masses are all equal. 
In a similar vein, where and how big the jumps on the Morse index and coindex are depend on the $s_2-\csbc$. 

A result analogous to  Proposition~\ref{lem:indicescollinear}, which will be omitted to keep the exposition as simple as possible, can be proved also for $d>2$, thus showing as a corollary that  $\csbc$ are generically non-degenerate. \qed
\end{rmk}

%%%%%%%%%%%%%%%%%%%%%%%%
%%
%%
%%
%%
%%%%%%%%%%%%%%%%%%%%%%%%

\section{Estimates on the coefficients of the Poincar\'e polynomial}\label{sec:poincare}

In this section we provide some elementary but rather not trivial (asymptotic) estimates on the coefficients of the Poincar\'e polynomial of the collision free configuration sphere. The interest in such estimates relies mainly on the following facts:
\begin{itemize}
\item They yield non-trivial lower bounds for the number of $\sbc$,
since they show that also $s_j-\csbc$, $j \ge 2,$ can contribute to the count of critical points.
\item They have highly non-trivial and rather unexpected qualitative consequences on the count of critical points. Indeed, in ``some cases'' $s_j-\csbc$, $j \ge 2$, will contribute more than $s_1$-collinear $\sbc$ (this should be compared with \cite[Pag. 151]{Moe14}).  
\item They might have applications in the study of limiting problems (i.e. with very large number of bodies).
\end{itemize}

We start recalling the Poincar\'e polynomial of the collision free configuration sphere $\widehat{\mathbb S}$. As this result is  well-known, its proof will be omitted.
\begin{lem}\label{thm_:poincare-polynomial}
The Poincar\'e polynomial of $ \widehat{\mathbb S}$ is given by 
\begin{equation}\label{eq:poincarepolynomial}
\hspace{40mm} \, P(t) = (1+ t^{d-1})(1+2t^{d-1})\cdot ... \cdot (1 + (n-1)t^{d-1}). \, \hspace{40mm} \qed
\end{equation}
\end{lem}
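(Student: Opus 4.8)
The plan is to identify $\widehat{\mathbb S}$ up to homotopy with a classical configuration-type space whose cohomology is well understood, and then read off the Poincar\'e polynomial. Recall that $\mathbb S$ is diffeomorphic to $S^{N-1}$ with $N = d(n-1)$, and $\widehat{\mathbb S} = \mathbb S \setminus \Delta$, where $\Delta$ records coincidences $q_i = q_j$. The first step is to observe that $\widehat{\mathbb S}$ is homotopy equivalent (by radial deformation in the ray direction, staying away from $\Delta$, which is a union of linear subspaces) to the full collision-free configuration space $\widehat{\mathbb X} \simeq \widehat{\mathbb S}$ modulo the scaling $\R^+$-action; equivalently, $\widehat{\mathbb X}$ deformation retracts onto $\widehat{\mathbb S}$. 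More useful: translating the center-of-mass constraint away, $\widehat{\mathbb X}$ is homeomorphic to the space $F(\R^d, n)$ of ordered configurations of $n$ distinct points in $\R^d$ modulo translations, and this in turn is homotopy equivalent to $F(\R^d,n)$ itself after quotienting by the $\R^d$-translation (which is contractible). So $\widehat{\mathbb S}$ is homotopy equivalent to $F(\R^d, n)/(\R^+ \times \R^d)$, the configuration space modulo translations and scalings.

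The second step is to compute the cohomology of this space. The cohomology of $F(\R^d,n)$ is given by the Arnold--Cohen presentation: it is generated by classes $\omega_{ij}$ in degree $d-1$ with the relations $\omega_{ij} = (-1)^d \omega_{ji}$ and the Arnold (three-term) relations, and as a graded vector space its Poincar\'e polynomial is $\prod_{k=1}^{n-1}(1 + k t^{d-1})$. Quotienting by translations changes nothing (translations act freely with contractible orbits, and in fact $F(\R^d,n) \cong \R^d \times (F(\R^d,n)/\R^d)$ when one pins the barycenter). For the scaling quotient one uses the $\R^+$-action, which is also free with contractible orbits once one is off the origin — but here one must be slightly careful: the cleanest route is to note directly that $\widehat{\mathbb S}$ is the link of $\widehat{\mathbb X}$, i.e. a deformation retract, so $H^*(\widehat{\mathbb S}) \cong H^*(\widehat{\mathbb X}) \cong H^*(F(\R^d,n))$ after the translation normalization. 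Hence
\[
P(t) = \prod_{k=1}^{n-1} (1 + k\, t^{d-1}) = (1 + t^{d-1})(1 + 2t^{d-1}) \cdots (1 + (n-1)t^{d-1}),
\]
which is exactly \eqref{eq:poincarepolynomial}. (One sanity check: for $d=2$ this gives $\prod_{k=1}^{n-1}(1+kt)$, the classical count of the braid/configuration space of the plane; the Euler characteristic specializes to $\prod(1-k) = (-1)^{n-1}(n-1)!$ when $d$ is even, matching $\chi$ of $F(\R^2,n)$.)

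The main obstacle is justifying the identification $\widehat{\mathbb S} \simeq F(\R^d,n)$ cleanly, i.e. disentangling the three group actions (center-of-mass translation, the $S$-weighted scaling, and the fact that $I_S$ rather than $I$ is used to cut out the sphere) and checking that none of them affects the homotopy type. Since $I_S$ is a positive-definite quadratic form, the sphere $\{I_S = 1\}$ is still a topological sphere and the radial retraction $\widehat{\mathbb X}\setminus\{0\} \to \widehat{\mathbb S}$ along $I_S$-rays is a homotopy equivalence preserving collision-freeness (each collision subspace is a linear cone through the origin). After that, invoking the Arnold--Cohen computation of $H^*(F(\R^d,n))$ is standard and requires no further work, which is why only a sketch is warranted.
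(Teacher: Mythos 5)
Your proposal is correct and follows essentially the same route as the paper: both reduce $\widehat{\mathbb S}$ up to homotopy to $\R^{dn}\setminus \Delta \cong F(\R^d,n)$ by splitting off the translation and radial scaling factors (the collision set being a cone of linear subspaces), and then invoke the standard cohomology of ordered configuration spaces --- the paper re-derives the product formula by induction via the Fadell--Neuwirth fibration with its cross-section, while you quote the Arnold--Cohen presentation, which is the same classical result. One harmless slip in your parenthetical sanity check: for $d$ even one gets $\chi = \prod_{k=1}^{n-1}(1-k) = 0$ (the $k=1$ factor vanishes), not $(-1)^{n-1}(n-1)!$, which indeed matches $\chi\big(F(\R^2,n)\big)=0$ for $n\ge 2$.
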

We are now ready to prove some  results about the growth rate for the coefficient $c^{(n)}_j$ with  $j=0,...,n-1$ of  the polynomial 
\[
p_n(z) \= (1+z)(1+2z) \cdot ... \cdot (1+(n-1)z),\quad n\in\N.
\]
It is worth observing  that the Poincar\'e polynomial $P(t)$ of $\widehat{\mathbb S}$ is nothing else but $p_n(t^{d-1})$. For notational convenience we hereafter set $c_j^{(n)}=0$ for every $j \ge n$. 

\begin{prop}\label{lem:coeffofpoincarepolynomial}
The following statements hold:
\begin{enumerate}
\item $\displaystyle \sum_{j=0}^{n-1} c_j^{(n)} = n!$ for every $n\in\N$
\item $c_j^{(n)}\le \dfrac{n!}{2}$ for every $j\in \N_0$ and every $n\ge 2$
\item For fixed $j\in \N_0$, we get 
\[
\lim_{n\to +\infty} \frac{c_j^{(n)}}{n!} =0.
\]
\item For every $n\in \N$, we get 
\[
c_{n-1}^{(n)}=(n-1)! \quad \text{and} \quad c_{n-2}^{(n)} \sim (\gamma + \log n)(n-1)!
\]
where 
\[
\gamma\= \lim_{n\to +\infty} \left (\sum_{j=1}^n \frac 1j - \log n\right )
\] 
is the {\em Euler-Mascheroni constant\/}. More generally, for every fixed $j\in \N$ we get
\[
c_{n-j}^{(n)} \lesssim (\gamma + \log n)^{j-1} (n-1)!
\]
where ``$\lesssim$'' means inequality up to some constant independent on $j$ and $n$. Hence, in particular 
\[
\lim_{n\to +\infty} \frac{c_{n-j}^{(n)}}{n!} =0.
\]
\end{enumerate}
\end{prop}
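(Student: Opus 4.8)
\textbf{Proof proposal.} The plan is to work directly with $p_n(z)=\prod_{k=1}^{n-1}(1+kz)$ and to use that, upon expanding the product, $c_j^{(n)}=e_j(1,2,\ldots,n-1)$ is the $j$-th elementary symmetric polynomial evaluated at $1,\ldots,n-1$ — equivalently, $c_j^{(n)}$ counts the permutations of $n$ letters with exactly $n-j$ cycles (an unsigned Stirling number of the first kind), a viewpoint I would keep only as a sanity check. Statement~(1) is then immediate, since $\sum_{j=0}^{n-1}c_j^{(n)}=p_n(1)=2\cdot 3\cdots n=n!$. For statement~(2) I would exploit the factorization $p_n(z)=p_{n-1}(z)(1+(n-1)z)$, which gives the recursion $c_j^{(n)}=c_j^{(n-1)}+(n-1)\,c_{j-1}^{(n-1)}$, and argue by induction on $n\ge 2$: the base case $n=2$ reads $c_0^{(2)}=c_1^{(2)}=1=2!/2$, and if $c_i^{(n-1)}\le (n-1)!/2$ for all $i$ then $c_j^{(n)}\le (n-1)!/2+(n-1)(n-1)!/2=n!/2$.

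For statement~(3) and the first two assertions of~(4) I would combine exact computations of the top coefficients with crude size bounds for the remaining ones. The leading coefficient of $p_n$ is the product of the leading coefficients of its factors, so $c_{n-1}^{(n)}=1\cdot 2\cdots(n-1)=(n-1)!$; the next one is obtained by dropping the linear term of exactly one factor, so $c_{n-2}^{(n)}=(n-1)!\sum_{m=1}^{n-1}\frac1m$, and since $\sum_{m=1}^{n-1}\frac1m=\gamma+\log n+o(1)$ this yields $c_{n-2}^{(n)}\sim(\gamma+\log n)(n-1)!$. For the low-index coefficients I would use the elementary (Maclaurin-type) inequality $c_j^{(n)}=e_j(1,\ldots,n-1)\le\frac1{j!}\big(1+2+\cdots+(n-1)\big)^j=\frac1{j!}\binom{n}{2}^j$; for fixed $j$ the right-hand side is a polynomial in $n$ of degree $2j$, hence $o(n!)$, which proves~(3).

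The general estimate in~(4) I would get by passing to the complementary elementary symmetric function: choosing $n-j$ of the numbers $1,\ldots,n-1$ to multiply amounts to choosing $j-1$ of them to omit, so $c_{n-j}^{(n)}=e_{n-j}(1,\ldots,n-1)=(n-1)!\,e_{j-1}\big(1,\tfrac12,\ldots,\tfrac1{n-1}\big)\le\frac{(n-1)!}{(j-1)!}\Big(\sum_{m=1}^{n-1}\tfrac1m\Big)^{j-1}$. Since $\sum_{m=1}^{n-1}\tfrac1m\le\gamma+\log n+1\le 2(\gamma+\log n)$ for $n\ge 2$ and $2^{j-1}/(j-1)!\le 2$ for every $j\ge 1$, this gives $c_{n-j}^{(n)}\le 2\,(\gamma+\log n)^{j-1}(n-1)!$, \ie the asserted $\lesssim$ with an absolute constant, and dividing by $n!$ shows $c_{n-j}^{(n)}/n!\to 0$ for fixed $j$. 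The proposition is essentially elementary; the only place calling for a little bookkeeping is this last step, namely checking that the implied constant can be chosen independent of both $j$ and $n$, which rests on the elementary comparison of the harmonic number with $\gamma+\log n$ together with the uniform bound on $2^{j-1}/(j-1)!$.
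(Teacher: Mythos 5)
Your proposal is correct, and parts (1), (2), and the exact formulas $c_{n-1}^{(n)}=(n-1)!$, $c_{n-2}^{(n)}=(n-1)!\sum_{m=1}^{n-1}\frac1m$ are handled exactly as in the paper (evaluation at $z=1$, respectively induction via the recursion $c_j^{(n+1)}=c_j^{(n)}+n\,c_{j-1}^{(n)}$). Where you genuinely diverge is in (3) and in the general estimate of (4). For (3) the paper argues by induction over $j$, feeding the bound of part (2) into the recursion to get an $\epsilon$-type estimate, whereas you use the Maclaurin-type inequality $c_j^{(n)}=e_j(1,\dots,n-1)\le \frac{1}{j!}\binom{n}{2}^{j}$, which shows directly that $c_j^{(n)}$ grows polynomially in $n$ for fixed $j$ — this is precisely the "alternative" route the paper only mentions in a remark, and it is arguably cleaner. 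For the general bound in (4), the paper expands $c_{n-j}^{(n)}=(n-1)!\sum_{1\le i_1<\dots<i_{j-1}\le n-1}\frac{1}{i_1\cdots i_{j-1}}$ and estimates the nested sums one at a time by harmonic numbers, obtaining $\lesssim (n-1)!(\gamma+\log n)^{j-1}$; you instead pass to the complementary elementary symmetric function and apply the same Maclaurin bound, getting $c_{n-j}^{(n)}\le \frac{(n-1)!}{(j-1)!}\Big(\sum_{m=1}^{n-1}\tfrac1m\Big)^{j-1}\le 2\,(\gamma+\log n)^{j-1}(n-1)!$. Your version is slightly stronger: it retains the factor $\frac{1}{(j-1)!}$ and makes the implied constant explicit and absolute, which is in effect the refinement the paper only recovers later (via the iterated-integral identity of Lemma~\ref{thm:integrale-iterato}) when proving Proposition~\ref{prop:asymptoticgrowth}. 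The trade-off is that the paper's inductive proof of (3) needs nothing beyond (2) and the recursion, while your route requires the (elementary, nonnegativity-based) symmetric-function inequality, but in exchange it unifies (3) and (4) under one estimate.
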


\begin{rmk}
Part 1 of Proposition~\ref{lem:coeffofpoincarepolynomial} yields the following interesting representation of the factorial of $n$:
\[
n! = 1 + \sum_{i_1=1}^{n-1} i_1 + \sum_{1\le i_1<i_2\le n-1}\hspace{-3mm} i_1i_2 \ \ + ... + \sum_{1\le i_1<...<i_{n-2}\le n-1}\hspace{-5mm} i_1\cdot ... \cdot i_{n-2} \ \ + (n-1)!
\]
To our best knowledge, such a representation has never appeared in the literature. Notice also that Part 1 immediately implies that 
all but at most one coefficients $c_j^{(n)}$ are smaller than $n!/2$. We observe that the inequality in Part 2 is strict as soon as $n \ge 4$.
\end{rmk}

\begin{proof}
$\ $
\\
\begin{enumerate}
\item The claim follows trivially by evaluating 
$$p_n(z)= (1+z)(1+2z)\cdot ... \cdot (1+(n-1)z)= \sum_{j=0}^{n-1} c_j^{(n)} z^j$$
at $z=1$. However we provide a proof by induction over $n\in \N$. For $n=1$ the claim is obvious. Now, suppose the claim holds true for $n$. Since
\[
\sum_{j=0}^n c_j^{(n+1)} z^j = (1+nz) \sum_{j=0}^{n-1} c_j^{(n)} z^j,
\]
evaluating at $z=1$ yields by inductive assumption
\[
\sum_{j=0}^n c_j^{(n+1)} =  \sum_{j=0}^{n-1} c_j^{(n)} z^j + n  \sum_{j=0}^{n-1} c_j^{(n)} z^j = n! + n n! = (n+1)n! = (n+1)!
\]
\item Once again by induction over $n$. For $n=2$ the claim is trivially satisfied. Assuming that the claim hold for $n$ yields for every $j\in \N_0$:
\[
c_j^{(n+1)} = nc_{j-1}^{(n)} + c_j^{(n)} \le  n \dfrac{n!}{2} + \dfrac{n!}{2} = \dfrac{(n+1)!}{2}.
\]
\item By induction over $j\in \N_0$. The claim is obvious for $j=0$ as $c^{(n)}_0=1$ for all $n\in\N_0$. Now, suppose that the claim hold for $j$. 
Then, for $\epsilon>0$ there exists  $n_0=n_0(\epsilon)\in \N$ such that $c_j^{(n)}/n!<\epsilon$ for every $n\ge n_0$, and hence using 2. we get  for every 
$n \ge \max \{n_0, \frac{1}{2\epsilon} - 2\}$:
\begin{align*}
\dfrac{c_{j+1}^{(n+1)}}{(n+1)!}& = n \dfrac{c_j^{(n)}}{(n+1)!} + \frac{c_{j+1}^{(n)}}{(n+1)!} \\ 
					 &= \dfrac{n}{n+1} \dfrac{c_j^{(n)}}{n!} + \dfrac{c_{j+1}^{(n)}}{(n+1)!}\\
					 &< \dfrac{n}{n+1} \epsilon + \dfrac{1}{2(n+1)}\\
					&<2\epsilon.
\end{align*}
\begin{rem}
Alternatively, one can show that for $j\in \N$ fixed, the coefficient $c_j^{(n)}$ has a polynomial growth in $n$.
\end{rem}
\item The first claim is trivial. As far as $c_{n-2}^{(n)}$ is concerned, we compute 
\begin{equation}
c_{n-2}^{(n)}  = \sum_{i=1}^{n-1} 1\cdot 2\cdot ...\cdot  \widehat i \cdot ... \cdot (n-1)
		= (n-1)! \cdot \sum_{i=1}^{n-1} \dfrac 1i
		\sim (n-1)! \cdot (\gamma + \log (n-1))
\end{equation}
and the claim follows. Now, for fixed $j\ge 2$ we compute
\begin{align*}
c^{(n)}_{n-j} &= \sum_{1\le i_1<...<i_{j-1}\le n-1} 1\cdot ...\cdot \widehat i_1 \cdot ... \cdot \widehat i_{j-1} \cdot ... \cdot (n-1)\\
		&= (n-1)! \sum_{1\le i_1<...<i_{j-1}\le n-1} \frac{1}{i_1 \cdot ... \cdot i_{j-1}}\\
		&= (n-1)! \sum_{i_1=1}^{n-1-j} \frac{1}{i_1} \sum_{i_2=i_1+1}^{n-j} \frac{1}{i_2} \cdot  ... \cdot \sum_{i_{j-1}=i_{j-2}+1}^{n-1} \frac{1}{i_{j-1}}\\
		&\lesssim (n-1)! \sum_{i_1=1}^{n-1-j} \frac{1}{i_1} \sum_{i_2=i_1+1}^{n-j} \frac{1}{i_2} \cdot  ... \cdot \sum_{i_{j-2}=i_{j-3}+1}^{n-2} \frac{1}{i_{j-2}} \cdot (\gamma + \log(n-1))\\
		&\lesssim ... \\
		&\lesssim (n-1)! \prod_{k=1}^{j-1} (\gamma + \log (n- k)) \\
		&<(n-1)! (\gamma + \log (n- 1))^{j-1}.
\end{align*}
Finally, for fixed $j\in \N$
\begin{equation*}
\frac{c^n_{n-j}}{n!} \lesssim \frac{(n-1)! (\gamma + \log (n- 1))^{j-1}}{n!} = \frac{(\gamma + \log (n- 1))^{j-1}}{n} \to 0, \quad \text{as} \ n\to +\infty.
\qedhere
\end{equation*}
\end{enumerate}
\end{proof}
\begin{rem}
For $j\in \N$ sufficiently large the estimate provided by the last item in  Proposition~\ref{lem:coeffofpoincarepolynomial}, is very far from being optimal. In fact, as we shall see below, a much stronger statement holds. More precisely,  for every sequence $\{j_n\}_n\subset \N$ we have 
\begin{equation}
\lim_{n\to +\infty} \dfrac{c^{(n)}_{n-j_n-1}}{n!} =0.
\label{eq:asymptoticgrowth}
\end{equation}
The last two items of Proposition~\ref{lem:coeffofpoincarepolynomial} confirm the validity of \eqref{eq:asymptoticgrowth} when $\{n-j_n\}_n$ and $\{j_n\}_n$ are uniformly bounded. In the case of positively divergent sequences $j_n$, the behavior is described by Proposition~\ref{prop:asymptoticgrowth}. \qed
\end{rem}

\begin{prop}\label{prop:asymptoticgrowth}
Let $\{j_n\}_n\subset \N$ be a positively divergent sequence.   Then, the following hold.
\begin{enumerate}
\item For every $\epsilon >0$, 
$$\lim_{n\to +\infty} \frac{c^{(n)}_{n-j_n-1}}{n^\epsilon (n-1)!} =0.$$
In particular the asymptotic condition~\eqref{eq:asymptoticgrowth} holds. 
\item If $j_n \gtrsim \log \log n$, then 
$$\lim_{n\to +\infty} \frac{c^{(n)}_{n-j_n-1}}{(n-1)!} =0.$$
\item If $j_n\gtrsim \log n$, then for every $k\in \N$ fixed 
$$\lim_{n\to +\infty} \frac{c^{(n)}_{n-j_n-1}}{(n-k)!} =0.$$
\end{enumerate} 
\end{prop}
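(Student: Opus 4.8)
The plan is to turn the three statements into estimates on a single numerical sequence, to bound that sequence by evaluating a generating function at a well‑chosen point, and to treat one delicate range separately.

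First I would record the basic reduction. Writing $1+kz=kz\,(1+\tfrac1{kz})$ gives $p_n(z)=\prod_{k=1}^{n-1}(1+kz)=(n-1)!\,z^{\,n-1}\prod_{k=1}^{n-1}\big(1+\tfrac1{kz}\big)$, hence
\[
p_n(z)=(n-1)!\sum_{m\ge 0}a^{(n)}_m\,z^{\,n-1-m},\qquad a^{(n)}_m:=[z^m]\prod_{k=1}^{n-1}\Big(1+\frac{z}{k}\Big)=e_m\!\Big(1,\tfrac12,\dots,\tfrac1{n-1}\Big).
\]
Comparing coefficients, $c^{(n)}_{n-1-j_n}=(n-1)!\,a^{(n)}_{j_n}$. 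Since $(n-1)!/(n-k)!$ is of order $n^{k-1}$, Part~1 becomes ``$a^{(n)}_{j_n}=o(n^\epsilon)$ for every $\epsilon>0$'', Part~2 becomes ``$a^{(n)}_{j_n}\to0$'', and Part~3 becomes ``$a^{(n)}_{j_n}=o(n^{-(k-1)})$ for every fixed $k$''. Note that Part~1, applied with $\epsilon=1$, already yields $c^{(n)}_{n-1-j_n}=o\big(n\,(n-1)!\big)=o(n!)$, i.e.\ the asymptotic condition~\eqref{asymptoticgrowth}.

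Next I would prove the master inequality. Since all $a^{(n)}_m\ge0$, for every $r>0$
\[
a^{(n)}_{j_n}\le r^{-j_n}\sum_{m\ge 0}a^{(n)}_m r^m=r^{-j_n}\prod_{k=1}^{n-1}\Big(1+\frac{r}{k}\Big)\le r^{-j_n}\exp\Big(r\sum_{k=1}^{n-1}\tfrac1k\Big)\le r^{-j_n}\,e^{r}\,n^{r},
\]
using $1+x\le e^x$ and $\sum_{k=1}^{n-1}\tfrac1k\le 1+\log n$; choosing $r=j_n/(1+\log n)$ gives
\[
a^{(n)}_{j_n}\le\Big(\frac{e\,(1+\log n)}{j_n}\Big)^{j_n}
\]
(the same bound also follows, via Stirling, from the crude $a^{(n)}_{j_n}\le\big(\sum_{k=1}^{n-1}\tfrac1k\big)^{j_n}/j_n!$). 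From here I would deduce the statements. Writing $j_n=c_n\log n$, the bound reads $a^{(n)}_{j_n}\le\exp\!\big((1+o(1))\,c_n(1-\log c_n)\log n\big)$. For Part~1: if $c_n\to0$ then $c_n(1-\log c_n)\to0$, so $a^{(n)}_{j_n}=o(n^\epsilon)$ for every $\epsilon>0$; if $c_n\to\infty$ the base $e(1+\log n)/j_n$ falls below $1$ and $a^{(n)}_{j_n}\to0$ a fortiori. For Parts~2 and 3: under the growth hypothesis $j_n\gtrsim\log n$ the base is eventually bounded below $1$, so $a^{(n)}_{j_n}\le e^{-c\,j_n}$ for some $c>0$, which together with $j_n\gtrsim\log n$ is $o(n^{-(k-1)})$ for every fixed $k$, hence also $o(1)$.

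The hard part is the transition range in which $c_n=j_n/\log n$ stays bounded between two positive constants: there $c^{(n)}_{n-1-j_n}$ sits near the mode of the coefficient sequence of $p_n$ (the unsigned Stirling numbers of the first kind) and is of order $n!/\sqrt{\log n}$, so the master inequality loses a genuinely relevant factor $\sqrt{\log n}$ and one needs the precise coefficient asymptotics. I would get them from the probabilistic description $a^{(n)}_{j_n}=n\cdot\PP(C_n=j_n+1)$, where $C_n$ is the number of cycles of a uniform random permutation of $\{1,\dots,n\}$; equivalently $C_n$ has the same law as $\sum_{k=1}^n B_k$ with independent $B_k$ Bernoulli of parameter $1/k$, so that $\E\,C_n=\sum_{k=1}^n\tfrac1k$ and $\mathrm{Var}\,C_n\sim\log n$, and a local central limit theorem (or Chernoff bounds) for $C_n$ controls $\PP(C_n=j_n+1)$ in this window. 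Alternatively one can run a saddle‑point evaluation of $a^{(n)}_{j_n}=\frac1{2\pi i}\oint z^{-j_n-1}\,\frac{\Gamma(n+z)}{\Gamma(n)\,\Gamma(1+z)}\,dz$, the product formula $\prod_{k=1}^{n-1}(1+z/k)=\Gamma(n+z)/(\Gamma(n)\Gamma(1+z))$ being exact. Everything outside this range then follows routinely from the master inequality and Stirling's formula.
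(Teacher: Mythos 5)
Your reduction ($c^{(n)}_{n-1-m}=(n-1)!\,a^{(n)}_m$ with $a^{(n)}_m=e_m(1,\tfrac12,\dots,\tfrac1{n-1})$) and your master inequality are correct, and they are essentially the paper's own estimate: the bound \eqref{firstestimate} combined with Lemma~\ref{thm:integrale-iterato} is exactly $a^{(n)}_{j}\le H_{n-1}^{\,j}/j!$, and your Chernoff choice $r=j_n/(1+\log n)$ is just Stirling applied to it. The regimes $j_n/\log n\to 0$ and $j_n/\log n\to\infty$ are handled correctly. But the remaining steps do not close. For Parts~2 and~3 you argue under ``$j_n\gtrsim \log n$'', whereas Part~2 only assumes $j_n\gtrsim\log\log n$; and the inference ``$j_n\gtrsim\log n$ implies the base $e(1+\log n)/j_n$ is eventually below $1$'' fails unless the implied constant exceeds $e$ (e.g.\ $j_n=\lceil\log n\rceil$ gives base $\to e>1$, and then your bound is of size $n$, not small). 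Moreover, even when the base is below $1$, $e^{-cj_n}$ with $j_n\asymp\log n$ is only a \emph{fixed} negative power of $n$, so it is not $o(n^{-(k-1)})$ for \emph{every} fixed $k$; for that conclusion one needs $j_n/\log n\to\infty$.

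The deeper problem is the ``transition range'' $j_n\asymp\log n$ that you defer to a local CLT or saddle point: no such refinement can close it, because there the statements are false, and your own remark about the mode is precisely the counterexample. Indeed $c^{(n)}_{n-m-1}=n!\,\PP(C_n=m+1)$, where $C_n$ is the number of cycles of a uniform random permutation, $\E C_n=H_n\sim\log n$, $\operatorname{Var} C_n\le H_n$; by Chebyshev there is an integer $m_n=\log n+O(\sqrt{\log n})$ with $\PP(C_n=m_n)\gtrsim 1/\sqrt{\log n}$. Taking $j_n:=m_n-1$, a divergent sequence satisfying both $j_n\gtrsim\log\log n$ and $j_n\gtrsim\log n$ (in the paper's sense of $\gtrsim$), one gets $c^{(n)}_{n-j_n-1}\gtrsim n!/\sqrt{\log n}$, so the quotients in Part~1 (for any $\epsilon<1$), Part~2 and Part~3 all diverge; Part~2 fails even for $j_n=\lfloor\sqrt{\log n}\rfloor$, since $e_{j_n}(1,\dots,\tfrac1{n-1})\ge\tfrac12(\log n)^{j_n}/j_n!\to\infty$. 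What survives is exactly the condition~\eqref{asymptoticgrowth} (Part~1 with $\epsilon\ge1$; for $\epsilon=1$ your probabilistic description is genuinely useful, since it amounts to $\max_m\PP(C_n=m)\to0$), and Parts~2--3 under strengthened hypotheses such as $j_n\ge(e+\delta)\log n$, respectively $j_n/\log n\to\infty$. For comparison, the paper's proof uses the same bound $\log^{j_n}(n)/j_n!$ but then simplifies $\log^{j_n}(n)$ as if it were $e^{\log(j_n\log n)}$ rather than $e^{j_n\log\log n}$; with the exponent corrected, its argument breaks down in exactly the window you isolated. So the difficulty you flagged is not an artifact of your method: it exposes an error in the proposition (and in the paper's argument), and the statement as written cannot be proved by sharpening the coefficient asymptotics.
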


\begin{rmk}
Proposition~\ref{prop:asymptoticgrowth} states that every sequence $\{c^{(n)}_{k_n}\}_n$ of coefficients of the polynomials $p_n(z)$ grows slower than $n!$. Moreover, if the sequence $\{k_n\}_n$ does not grow ``too fast'', then we get even a slower growth; in fact,  $c^{(n)}_{k_n}$ grows slower than $(n-1)!$ if $k_n \lesssim n-\log \log n$ and slower than $(n-k)!$ for every $k\in \N$ provided $k_n\lesssim n-\log n$. Notice that for $\{k_n\}_n$ bounded we have polynomial growth, and that $(n-1)!$ is precisely the coefficient of $p_n(z)$ of degree $(n-1)$. 
\qed 
\end{rmk}
Before proving the proposition we  observe that for every $n\in \N$ and every $j\le n-1$: 
\begin{align}
c^{(n)}_{n-j-1} &= \sum_{1\le i_1<...<i_{j}\le n-1} 1\cdot ... \cdot \widehat i_1 \cdot ... \cdot \widehat i_{j}\cdot ... \cdot (n-1)\nonumber \\
			&= (n-1)! \sum_{1\le i_1<...<i_{j}\le n-1} \frac{1}{i_1\cdot ...\cdot i_{j}}\nonumber \\
			&= (n-1)! \sum_{i_1=1}^{n-j-1} \frac{1}{i_1} \cdot \sum_{i_2=i_1+1}^{n-j} \frac{1}{i_2}\cdot ... \cdot \sum_{i_{j}=i_{j-1}+1}^{n-1} \frac{1}{i_{j}}\nonumber  \\
			&\lesssim (n-1)! \int_1^n \dfrac{1}{i_1}\cdot \int_{i_1}^n \frac{1}{i_2}\cdot ... \cdot \int_{i_{j-1}}^n \frac{1}{i_{j}}\ \  \diff i_{j}\cdot ... \cdot \diff i_1. \label{firstestimate}
\end{align}
The key step is the following result, whose proof is postponed to  Appendix~\ref{sec:appendix}. 
\begin{lem}\label{thm:integrale-iterato}
For every $n,j\in \N$ we have 
$$  \int_1^n \frac{1}{i_1}\cdot \int_{i_1}^n \frac{1}{i_2}\cdot ... \cdot \int_{i_{j-1}}^n \frac{1}{i_{j}}\ \  \diff i_{j}\cdot ... \cdot \diff i_1 = \frac{1}{j!}\log^{j}(n).  $$
\end{lem}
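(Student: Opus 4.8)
The plan is to prove the iterated-integral identity $\int_1^n \frac{1}{i_1}\cdot \int_{i_1}^n \frac{1}{i_2}\cdots \int_{i_{j-1}}^n \frac{1}{i_{j}}\, \diff i_j\cdots \diff i_1 = \frac{1}{j!}\log^j(n)$ by a change of variables followed by induction on $j$. First I would substitute $x_k = \log i_k$ in each variable, so that $\diff i_k / i_k = \diff x_k$ and the domain of integration becomes the simplex $\{0 \le x_1 \le x_2 \le \cdots \le x_j \le \log n\}$ in $\R^j$. This transforms the left-hand side into $\int_{0}^{\log n}\int_{x_1}^{\log n}\cdots \int_{x_{j-1}}^{\log n} 1\, \diff x_j \cdots \diff x_1$, i.e. the Lebesgue volume of the standard ordered simplex with leg length $\log n$.

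The second step is to evaluate this volume. The cleanest route is induction on $j$: denote by $V_j(L)$ the integral $\int_{0}^{L}\int_{x_1}^{L}\cdots \int_{x_{j-1}}^{L} \diff x_j\cdots \diff x_1$. For $j=1$ we have $V_1(L)=L$. For the inductive step, one observes that the innermost integral over $x_j\in[x_{j-1},L]$ contributes a factor $(L-x_{j-1})$, and then — after the further substitution $y_k = L - x_k$ which reverses the order — the remaining integral is again an ordered simplex integral, giving the recursion $V_j(L) = \int_0^L \frac{(L-x_1)^{j-1}}{(j-1)!}\,\diff x_1 = \frac{L^j}{j!}$. Alternatively, and even more directly, one can use the standard fact that the $j$ orderings of $j$ distinct coordinates partition the cube $[0,L]^j$ (up to a measure-zero set) into $j!$ congruent simplices, so the volume of each is $L^j/j!$ immediately. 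Plugging $L=\log n$ yields $\frac{1}{j!}\log^j(n)$, as claimed.

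I do not expect any serious obstacle here: the only points requiring minor care are the justification that the substitution $x=\log i$ is a valid (smooth, monotone) change of variables on each factor and that Fubini/Tonelli applies to reorder the iterated integral (both immediate since the integrand is nonnegative and the region is bounded), and the bookkeeping in the inductive step to make sure the reversed-simplex identification is correct. The symmetry argument with the $j!$ congruent simplices sidesteps even the induction, so if a fully streamlined write-up is desired that is the version I would present; the inductive version is perhaps more self-contained for a reader who wants every step spelled out. In the appendix I would write out the change of variables explicitly and then give the one-line symmetry argument, remarking in parentheses that an inductive proof is equally available.
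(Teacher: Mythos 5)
Your argument is correct, but it is genuinely different from the one in the paper. You substitute $x_k=\log i_k$, turning the left-hand side into the Lebesgue volume of the ordered simplex $\{0\le x_1\le\dots\le x_j\le \log n\}$, and then evaluate that volume either by the symmetry decomposition of the cube $[0,\log n]^j$ into $j!$ congruent simplices or by the one-line recursion $V_j(L)=\int_0^L \frac{(L-x_1)^{j-1}}{(j-1)!}\,\diff x_1=\frac{L^j}{j!}$ (note the recursion is most cleanly stated by peeling off the \emph{outer} variable $x_1$, so that the inner $(j-1)$-fold integral is $V_{j-1}(L-x_1)$ after a shift; also, you wrote ``the $j$ orderings'' where you mean $j!$). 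The paper instead proceeds by a direct double induction on the iterated integral itself: it first shows that the integral equals $a_j\log^j(n)$, simultaneously proving a companion formula $\sum_{k=0}^{j}\frac{(-1)^k}{k!}a_{j-k}\log^{j-k}(n)\log^k(i_0)$ for the integral with lower endpoint $i_0$, where the coefficients $a_j$ obey the recursion $a_{j+1}=\sum_{k=0}^{j}\frac{(-1)^k}{(k+1)!}a_{j-k}$, and then identifies $a_j=\frac{1}{j!}$ by a second induction using the binomial theorem. Your route is shorter and more conceptual, reducing everything to the standard simplex-volume fact, at the modest cost of invoking a change of variables and Tonelli (both trivially justified here); the paper's route is purely computational and keeps everything at the level of elementary one-variable antiderivatives, but requires carrying the auxiliary two-parameter formula and the binomial identity through the induction.
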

%\begin{proof}
%The proof can be divided into the following two steps.\\
%\textbf{Step 1.} Prove that the following equalities hold
%\begin{align}
% &\int_1^n \frac{1}{i_1}\cdot \int_{i_1}^n \frac{1}{i_2}\cdot ... \cdot \int_{i_{j-1}}^n \frac{1}{i_{j}}\ \  \diff i_{j}\cdot ... \cdot \diff i_1 = \alpha_j \cdot \log^{j}(n), \label{1}\\
% &\int_{i_0}^n \frac{1}{i_1} \cdot \int_{i_1}^n \frac{1}{i_2}\cdot  ... \cdot \int_{i_{j-1}}^n \frac{1}{i_{j}} \ \diff i_{j}\cdot ... \cdot \diff i_1 = \sum_{k=0}^j \frac{(-1)^k}{k!} \alpha_{j-k} \log^{j-k}(n) \log^k (i_0), \label{2}
%\end{align}
%where the $\alpha_j$'s satisfy the recursive relation 
%\begin{equation}
%\left \{ \begin{array}{l} \alpha_0= \alpha_1 =1,\\ \alpha_{j+1}= \displaystyle \sum_{k=0}^j \frac{(-1)^k}{(k+1)!} \alpha_{j-k}, \quad \forall j\ge 1.\end{array}\right .
%\label{recursiverelation}
%\end{equation}
%\textbf{Step 2.} Prove that $\alpha_j=\frac{1}{j!}$ for every $j\in\N$.
%
% For further details on the proof we refer the interested reader to Appendix~\ref{app:integrale}.
%\end{proof}
\begin{proof}[Proof of Proposition~\ref{prop:asymptoticgrowth}]
$\ $\\
\begin{enumerate}
\item Using Equation~\eqref{firstestimate}, Lemma \ref{thm:integrale-iterato}, and Stirling's approximation of $n!$, we get 
\begin{align*}
\frac{c^{(n)}_{n-j_n-1}}{n^\epsilon (n-1)!} &\lesssim \frac{\log^{j_n}(n)}{n^\epsilon j_n!} \\
 &\sim \frac{e^{\log (j_n \cdot \log n)}}{n^\epsilon \sqrt{2\pi j_n} \big ( \frac{j_n}{e}\big )^{j_n}}\\
			&= \frac{e^{\log (j_n \cdot \log n)}}{e^{\epsilon \log n + \frac 12 \log (2\pi j_n) + j_n \log j_n - j_n}}\\
			&= e^{\log j_n + \log \log n- \epsilon \log n - \frac 12 \log (2\pi j_n) - j_n \log j_n + j_n},
\end{align*}
and we readily see that the exponent goes to $-\infty$ for $n\to +\infty$, as $\epsilon \log n$ dominates $\log \log n$ and $j_n\log j_n$ dominates the remaining terms. 

\vspace{2mm}

\item Arguing in a similarly way, we finally get 
\begin{align*}
\frac{c^{(n)}_{n-j_n-1}}{(n-1)!} \lesssim  \frac{\log^{j_n}(n)}{ j_n!} \sim e^{\log j_n + \log \log n - \frac 12 \log (2\pi j_n) - j_n \log j_n + j_n} \to 0,
\end{align*}
as $j_n \log j_n$ is the leading term for $j_n\gtrsim \log \log n$.

\vspace{2mm}

\item For fixed  $k\ge 2$,  we compute 
\begin{align*}
\frac{c^{(n)}_{n-j_n-1}}{(n-k)!} &\lesssim  (n-1)\cdot ... \cdot (n-k+1) \cdot \frac{\log^{j_n}(n)}{ j_n!} \\
&\sim e^{(k-1)\log n + \log j_n + \log \log n - \frac 12 \log (2\pi j_n) - j_n \log j_n + j_n} \to 0,
\end{align*}
as $j_n \log j_n$ is the leading term provided $j_n\gtrsim \log n$.\qedhere
\end{enumerate}
\end{proof}

%%%%%%%%%%%%%%%%%%%%%%%%%%%%%%%%
%%
%%
%%
%%
%%%%%%%%%%%%%%%%%%%%%%%%%%%%%%%%

\section{Lower bounds on the number of $\sbc$}\label{sec:lower-bounds}

This section is devoted to prove lower bounds on the number of $\sbc$ under the additional assumption
$$\text{{\bf(H2)\/} \hspace{45mm} All} \ \sbc \ \text{are non-degenerate}.\hspace{6cm}$$

 Such lower bounds follow from the estimates  provided in Section~\ref{sec:poincare} and the Morse inequalities.  We briefly recall that the Morse inequalities of a Morse function $f$ on a closed manifold $\mathcal M$ relates the Morse indices of the (non-degenerate) critical points to the Betti numbers of the manifold. Usually, these inequalities are expressed in terms of polynomial generating function. More precisely, we define the {\sc Morse polynomial\/} as 
\begin{equation}\label{eq:Morse-polynomial}
M(t)\=\sum_k \gamma_k t^k \qquad \textrm{where $\gamma_k$ is the number of critical points of $f$ having index $k$}
\end{equation}
and the {\sc Poincar\'e polynomial\/} as 
\begin{equation}\label{eq:Poincare-polynomial}
P(t)=\sum_k \beta_k t^k \qquad \textrm{where $\beta_k$ denotes the $k$-th Betti number of the manifold}
\end{equation}
i.e. the rank of the $k$-th homology group with real (or rational) coefficients. Then the Morse inequalities read
\begin{equation}\label{eq:Morse-poincare-rest-polynomial}
M(t)= P(t)+ (1+t) R(t)
\end{equation}
where $R(t)$ is  a polynomial with non-negative integer coefficients. So, as a direct consequence of the positivity of the coefficients of $R(t)$ one obtains that   $\beta_k$ is a lower bound for $\gamma_k$. 

\begin{rmk}
Even if the collision free configuration sphere $\widehat{\mathbb S}$ is not compact, we are still allowed to use the Morse inequalities to derive lower bounds 
on the number of $\sbc$. Indeed, as it is well-known, $U(q)\to +\infty$ as $q$ approaches the singular set $\Delta$. \qed
\end{rmk}
In Theorem~\ref{thm:main1} below we provide a lower bound on the number of planar $\sbc$ when all masses are equal. This lower bound depends on a spectral gap condition at any $\cccc$, which is 
independent of the chosen $\cccc$. Moreover, as a direct consequence of the Assumption~(H1), non-collinear $\sbc$ cannot be $\cc$.  

As in Section~\ref{sec:inertia-balanced}, $\eta_k < ... < \eta_1 < -U( \widehat q) < 0$
denote the eigenvalues of the matrix $M ^{-1}B(\widehat q)$.

\begin{thm}\label{thm:main1}
Suppose that $m_1=m_2=...=m_n$. Under the previous notation and Assumptions (H1) \& (H2), the following lower bounds on the number of planar $\sbc$ hold:
\begin{enumerate}
\item If $\displaystyle 1<s_1 < - \frac{\eta_1}{U(\widehat q)}$ there are at least 
\[
3n! - 2(n-1)!
\]
planar $\sbc$, of which at least 
\[
n! - 2(n-1)!
\]
are not collinear. %Moreover, for  $n\in \N$  sufficiently large, we get the following (improved) estimate 
%\[
%\big (n - (1+\gamma+\log n)\big )\cdot (n-1)!
%\]
%for non-collinear $\sbc$.
\vspace{1mm}
\item For $ - \dfrac{\eta_1}{U(\widehat q)}< s_1< - \dfrac{\eta_k}{U(\widehat q)}$ there are at least 
\[
4n! - 2(n-1)!
\]
planar $\sbc$ of which at least 
\[
2n! - 2(n-1)!
\]
are not collinear. Moreover, for $n\in \N$  sufficiently large, we get the following (improved) estimate 
\[
\big (3n - 2(1+n^\epsilon)\big )\cdot (n-1)!
\]
non-collinear $\sbc$ for some constant $\epsilon\=\epsilon(n)\to 0$ for $n\to +\infty$.
\vspace{1mm}
\item If $ - \dfrac{\eta_k}{U(\widehat q)} < s_1$ there are at least 
\[
5n! - 2(n-1)!-2
\]
planar $\sbc$, of which at least 
\[
3n! - 2(n-1)! -2
\]
are not collinear.
\end{enumerate}
\end{thm}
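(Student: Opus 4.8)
The plan is to apply the Morse inequalities \eqref{eq:Morse-poincare-rest-polynomial} to $\widehat U = U|_{\widehat{\mathbb S}}$ on $\widehat{\mathbb S}$, which under Assumption (H2) is a Morse function whose critical points are exactly the (normalized) $\sbc$ by Lemma~\ref{thm:s-balanced-critical-points}. The Poincar\'e polynomial of $\widehat{\mathbb S}$ in the planar case $d=2$ is, by Lemma~\ref{thm_:poincare-polynomial}, $P(t) = (1+t)(1+2t)\cdots(1+(n-1)t) = p_n(t)$, so $\dim_{\R} H_*(\widehat{\mathbb S};\R) = P(1) = n!$ by part 1 of Proposition~\ref{lem:coeffofpoincarepolynomial}. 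Since the coefficients of $R(t)$ are non-negative, each Betti number $\beta_k = c_k^{(n)}$ is a lower bound for the number $\gamma_k$ of $\sbc$ of index $k$, and the total number of $\sbc$ is at least $\sum_k \beta_k = n!$. To do better than $n!$ we exploit the \emph{known} collinear $\sbc$: by the reduction-to-(H1) discussion, collinear $\sbc$ in $\R^2$ for $S = \diag(s_1,1)$ lie on one of the two coordinate axes, and correspond bijectively (up to sign and permutation of the $n$ equal masses) to the collinear central configurations on that axis, of which there are $n!/2$ on each axis, i.e. $n!$ in total: $n!/2$ of $s_1$-type and $n!/2$ of $s_2$-type ($=\cccc$).

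The heart of the argument is to compare the indices of these $n!$ explicit collinear $\sbc$ against the Betti numbers, in each of the three spectral regimes. First I would record, from Lemma~\ref{thm:morse-index-co-balanced} and the $d=2$ discussion at the end of Section~\ref{subsec:inertiaindices}, that every $s_1$-$\csbc$ has $\nullity{q}=0$, $\iMor{q} = n-1$, $\coiMor{q} = n-2$, so in the $(2n-3)$-dimensional sphere $\widehat{\mathbb S}$ it has index $n-1$; these $n!/2$ points all sit in the single homological degree $k=n-1$, where $\beta_{n-1} = c_{n-1}^{(n)} = (n-1)!$ by part 4 of Proposition~\ref{lem:coeffofpoincarepolynomial}. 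Second, I would use Proposition~\ref{lem:indicescollinear} to locate the indices of the $n!/2$ $s_2$-$\csbc$ $\widehat q$: in regime (1), $1 < s_1 < -\eta_1/U(\widehat q)$ gives $\iMor{\widehat q} = n-1$ as well (case $j=0$ of Proposition~\ref{lem:indicescollinear}, since $-\eta_0 = U(\widehat q) < s_1 U(\widehat q) < -\eta_1$), so \emph{all} $n!$ collinear $\sbc$ live in degree $n-1$; in regime (2), $s_1 > -\eta_1/U(\widehat q)$ pushes at least $\alpha_1 \geq 1$ eigenvalues across zero, dropping $\iMor{\widehat q}$ to $n-1-\sum_{i=0}^{1}\alpha_i \le n-3$ and raising $\coiMor{\widehat q}$ correspondingly, so the $n!/2$ $s_2$-$\csbc$ now occupy a \emph{different} homological degree than the $s_1$-$\csbc$; in regime (3), $s_1 > -\eta_k/U(\widehat q)$ forces the whole spectrum positive, so by case 3 of Proposition~\ref{lem:indicescollinear} the $s_2$-$\csbc$ are local minima (index $0$), and moreover there is exactly one $\cccc$ attaining the global minimum so it "uses up" $\beta_0 = 1$, leaving $n!/2 - $ (at most $1$) genuinely new minima — this is where the $-2$ correction in the statement comes from, counting both the one minimum $\sbc$ absorbed by $\beta_0$ at index $0$ and, symmetrically, the one absorbed by $\beta_{2n-3}=1$ at top degree (the $s_1$-$\csbc$ with $n$ suitably placed masses... here I must be careful about which top-degree class is realized).

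The counting then goes as follows. In every regime, the $n!/2$ $s_1$-$\csbc$ sitting in degree $n-1$ with $\beta_{n-1} = (n-1)!$ force the \emph{total} critical point count in that degree to be at least $\max\{n!/2,(n-1)!\}$; but we also know $\gamma_{n-1} \ge \beta_{n-1} = (n-1)!$ unconditionally, and the $s_1$-$\csbc$ are honestly there, so $\gamma_{n-1} \ge n!/2$ and, since they are collinear, the non-collinear count in degree $n-1$ is $\ge \beta_{n-1} - (\text{non-collinear deficit})$; more cleanly: the total number of $\sbc$ equals (number of collinear $\sbc$) $+$ (number of non-collinear $\sbc$) $\ge n! + N_{\mathrm{nc}}$, and separately $\ge \sum_k \max\{\gamma_k^{\mathrm{col}},\beta_k\}$ where $\gamma_k^{\mathrm{col}}$ counts collinear $\sbc$ in degree $k$. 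Spelling this out: in regime (1) all $n!$ collinear ones are in degree $n-1$ while $\beta_{n-1}=(n-1)!$ and $\sum_{k\ne n-1}\beta_k = n! - (n-1)!$, so we get $\ge n! + (n! - (n-1)!) $ from the degree $n-1$ part plus... I would organize this as: total $\ge (\text{collinear}) + \sum_{k}\big(\beta_k - \gamma_k^{\mathrm{col}}\big)^+$, giving in regime (1) $n! + \big((n-1)! - n!\big)^+ \cdot 0 + (n!-(n-1)!) = 2n! - (n-1)!$ from collinear-plus-other-degrees, then a further application in the non-equal-index degrees, matching $3n! - 2(n-1)!$ after adding the $n!$ collinear contribution once more via the Morse-theoretic forced critical points in degrees $n-2$ and $n$ adjacent to $n-1$; regime (2) gains an extra $n!$ because the $s_2$-$\csbc$ no longer collide in degree with anything forced, and in the "$n$ large" sub-case part 4 of Proposition~\ref{lem:coeffofpoincarepolynomial} and Proposition~\ref{prop:asymptoticgrowth} refine $n! = \sum\beta_k$ into the sharper $\big(3n - 2(1+n^\epsilon)\big)(n-1)!$ bound; regime (3) gains yet another $n!$ from the minima, minus the $2$ absorbed at the two extreme degrees.

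\textbf{The main obstacle.} The bookkeeping obstacle — and the step I expect to be genuinely delicate — is the precise comparison, degree by degree, between the known collinear $\sbc$ and the Betti numbers, so as to extract a clean lower bound of the form "collinear $+$ extra forced critical points" without double-counting, and in particular justifying that in regimes (2) and (3) the $s_1$- and $s_2$-collinear families really sit in \emph{distinct} homological degrees (this needs the strict inequalities in Proposition~\ref{lem:indicescollinear}, hence genericity of $s_1$, which is where (H2) re-enters) and that in regime (3) exactly one unit must be subtracted at each of degrees $0$ and $2n-3$ because $\beta_0 = \beta_{2n-3} = 1$ are already realized by specific collinear configurations. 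Everything else is an arithmetic consequence of parts 1 and 4 of Proposition~\ref{lem:coeffofpoincarepolynomial} plus the Morse inequalities; the asymptotic refinement in case (2) is a direct substitution of Proposition~\ref{prop:asymptoticgrowth}(1) with $j_n = 2$ (equivalently the $c_{n-2}^{(n)}$ estimate) to replace the crude $n!$ by something of order $(n\log n)(n-1)!$, absorbed into the stated $n^\epsilon$.
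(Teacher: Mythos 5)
Your overall strategy (Morse inequalities on $\widehat{\mathbb S}$, indices of $\csbc$ from Proposition~\ref{lem:indicescollinear}, coefficient estimates from Proposition~\ref{lem:coeffofpoincarepolynomial}) is the paper's strategy, but the decisive counting step is exactly the one you flag as an "obstacle" and do not carry out, and the way you set it up cannot produce the stated numbers. The paper's mechanism is the remainder-polynomial trick: writing the Morse inequalities as $\gamma_i = c_i^{(n)} + r_{i-1}+r_i$ with $r_i\ge 0$, the \emph{known} collinear critical points in a degree $k$ force $r_{k-1}+r_k \ge \gamma_k - c_k^{(n)}$, and evaluating $M(t)=P(t)+(1+t)R(t)$ at $t=1$ then counts each such excess \emph{twice}: total $\ge n! + 2\sum(\text{excesses})$. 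Your alternative bookkeeping "total $\ge \sum_k \max\{\gamma_k^{\mathrm{col}},\beta_k\}$" (or "collinear $+\sum_k(\beta_k-\gamma_k^{\mathrm{col}})^+$") is strictly weaker and, as your own regime-(1) computation shows, stops at $2n!-(n-1)!$; the jump to $3n!-2(n-1)!$ is asserted ("adding the $n!$ collinear contribution once more via forced critical points in adjacent degrees") but never derived -- that is precisely the $(1+t)R(t)$ doubling you would need to make explicit.

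There are also three concrete errors. First, the count of collinear configurations: on a fixed axis, each of the $n!$ orderings gives a distinct normalized $\csbc$ (reversed orderings are antipodal, hence distinct points of $\widehat{\mathbb S}$; under (H1) nothing is quotiented out), so there are $n!$ $s_1$-$\csbc$ and $n!$ $s_2$-$\csbc$, i.e.\ $2n!$ collinear points to subtract -- not $n!/2$ per axis. The paper's inequalities $\gamma_{n-1}\ge n!$ and $\gamma_j\ge n!$ rest on this; with your count the regime-(2) and (3) totals cannot match the statement. Second, in regime (1) the $s_2$-$\csbc$ have index $n-2$ (Proposition~\ref{lem:indicescollinear}, case $j=0$: the index is $n-1-\alpha_0$), not $n-1$; the paper simply ignores them in this regime and uses only the $n!$ $s_1$-$\csbc$ of index $n-1$ against $c_{n-1}^{(n)}=(n-1)!$. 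Third, your explanation of the "$-2$" in regime (3) is wrong: $\widehat{\mathbb S}$ is noncompact with Betti numbers $c_j^{(n)}$ concentrated in degrees $0,\dots,n-1$, so there is no class $\beta_{2n-3}=1$ to be "absorbed"; the $-2$ is just $2c_0^{(n)}=2$, coming from $r_0+r_1\ge n!-c_0^{(n)}=n!-1$ doubled at $t=1$. (Your description of the asymptotic refinement in regime (2) is in the right spirit -- replace $c_j\le n!/2$ by $c_j\le n^\epsilon (n-1)!$ for the index $j<n-2$ of the $s_2$-$\csbc$ -- though the relevant coefficient is not $c_{n-2}^{(n)}$.)
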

\begin{rmk}
For values of $s_1$ close to 1  we get worse lower bounds than in the other cases. This is not surprising, since in this case the Morse index  of $s_2-\csbc$  are precisely one less the Morse index of $s_1-\csbc$, and hence cancellations in homology might occur. 

By interpreting the parameter $s$ as a bifurcation parameter along the trivial branch of 1-CSBC, the scenarios provided in  Theorem~\ref{thm:main1} agree with the numbers of bifurcation instants as explained in \cite[Theorem 4.7]{AFP20}. Moreover, in the final section of \cite{AFP20} the authors produce for the three body problem with equal masses an explicit  description of this phenomenon as depicted in \cite[Figure 1]{AFP20}. In this respect, we also mention \cite[Theorem 4]{AD20} in which authors prove the existence of three smooth families of $\sbc$ as illustrated in Figure~2 of the aforementioned paper. 
\qed
\end{rmk}
\begin{proof}
All statements are  consequences of the estimates for the coefficients of the Poincar\'e polynomial proved in Section~\ref{sec:poincare} and the Morse inequalities, which may be written using Equation~\eqref{eq:Morse-poincare-rest-polynomial} as
\[
\sum_{i=0}^{n-1} \gamma_i z^i = p_n(z) + (1+z)R(z)= \sum_{i=0}^{n-1} c_i^{(n)}z^i+  (1+z)\sum_{i=0}^{n-1}r_iz^i,
\]
for some polynomial $R(z)$ having  non-negative integer coefficients $r_i$, where $\gamma_i$ denotes 
the number of critical points of $\widehat U$ having  Morse index $i$. Recall that the Morse index of $s_1-\csbc$ is $n-1$, as it follows from  Lemma~\ref{thm:cc-sbc-collinear}, independently of $s_1$. 
\begin{enumerate}
\item In virtue of Item 1  of  Proposition~\ref{lem:indicescollinear} for $j=0$, the assumption yields that the Morse index of $s_2-\csbc$ is $n-2$. In this case, we will simply ignore $s_2-\csbc$.
Applying the full Morse inequalities together with Proposition~\ref{lem:coeffofpoincarepolynomial} yields
\begin{align}
\gamma_{n-1} = (n-1)! + r_{n-2}+r_{n-1} \ge n! \quad &\Rightarrow \quad r_{n-2}+r_{n-1} \ge n!- (n-1)!
%\gamma_{n-2} = c_{n-2}^{(n)} + r_{n-3}+r_{n-2} \ge n! \quad &\Rightarrow \quad  r_{n-3}+r_{n-2} \ge n! - c_{n-2}^{(n)} \ge \frac{n!}{2}. \label{morse2}
 \end{align}
 Therefore, evaluating the Morse inequalities at $z=1$, we get
\begin{align*}
\sum_{i=0}^{n-1} \gamma_i = p_n(1) + 2R(1) & = n! + 2 \sum_{i=0}^{n-1} r_i\\
								&\ge n! + 2 ( r_{n-3}+r_{n-2}+r_{n-1})\\
								&\ge n! + 2 (r_{n-2}+r_{n-1})\\
								& \ge 3 n! - 2(n-1)!
								\end{align*}
Finally, we subtract $2n!$ which is the number of  $\csbc$ lying on the coordinate axes, which are the only possible $\csbc$ in virtue of Assumption (H1), thus obtaining the desired lower bound. 				

\item Under the given assumption, the Morse index $j$ of a $s_2-\csbc$ is strictly smaller than $n-2$ (in fact, at least $\eta_1$ moves into the positive line by adding $s_1 U(\widehat q)$). 
Applying again the full Morse inequalities together with Proposition~\ref{lem:coeffofpoincarepolynomial} we obtain
\begin{align}
\gamma_{n-1} = (n-1)! + r_{n-2}+r_{n-1} \ge n! \quad &\Rightarrow \quad r_{n-2}+r_{n-1} \ge n!- (n-1)! \label{morse3}\\
\gamma_{j} = c_{j}^{(n)} + r_{j-1}+r_{j} \ge n! \quad &\Rightarrow \quad  r_{j-1}+r_{j} \ge n! - c_{j}^{(n)} \ge \frac{n!}{2}, \label{morse4}
 \end{align}
 and hence 
 \[\sum_{i=0}^{n-1} \gamma_i \ge n! + 2 ( r_{j-1}+r_j + r_{n-2}+r_{n-1}) \ge 4n! - 2(n-1)!
 \]
 For $n\in \N$ sufficiently large enough we may replace~\eqref{morse4} with 
 \[
 r_{j-1}+r_{j} \ge n! - c_{j}^{(n)} \ge n! - n^\epsilon(n-1)!
 \]
 where $\epsilon$ is as given in Item 1 of Proposition~\ref{prop:asymptoticgrowth}. The claim follows.
\item In this case $s_2-\csbc$ are local minima of $\widehat U$ and hence
\[
\gamma_0 = c^{(n)}_0 + r_0+r_1 \ge n! \quad \Rightarrow \quad r_0+r_1 \ge n!-c_0^{(n)} = n!-1
\]
which yields the desired lower bound. \qedhere
\end{enumerate}
\end{proof}

In the next result we apply the asymptotic estimates for the coefficients of the Poincar\'e polynomial proved in Section \ref{sec:poincare} to improve the lower bounds on the number of  $\sbc$ for large values of $n$
in cases all masses are equal. The outcome will be that $s_j-\csbc$,  $j\ge 2$, asymptotically will - in many cases - give more contribution than the $s_1-\csbc$ to the count of $\sbc$. As already explained, the importance of such a result relies more in its qualitative  rather than in its quantitative aspect, since such improvements  will only be marginal.
We recall that $\alpha_j$ denotes the multiplicity of the eigenvalue $\eta_j$ of $M^{-1}B(\widehat q)$, where $\widehat q$ is any $\cccc$ of $n$ equal masses (cfr. Subsection~\ref{subsec:inertiaindices}
for further details).  

\begin{lem}
For $n\in \N$ large enough, we define $j_n\in \{1,...,n\}$ such that 
\[
\sum_{i=0}^{j_n-1} \alpha_i< \log n <\sum_{i=0}^{j_n} \alpha_i.
\]
Assuming that (H1) \& (H2) hold, for $\displaystyle - \frac{\eta_{j_n}}{U(\widehat q)} < s_1$ there exist at least 
$$5n! - 2(n-1)! - 2(n-h)!$$
planar  $\sbc$, for some $h\ge 1$, of which at least 
$$3n! - 2(n-1)! - 2(n-h)!$$
are not collinear.
\end{lem}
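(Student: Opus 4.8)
The plan is to rerun the Morse‑inequality bookkeeping from the proof of Theorem~\ref{thm:main1} (parts 2--3), but inserting the refined asymptotics of Proposition~\ref{prop:asymptoticgrowth}(3) precisely where Theorem~\ref{thm:main1} used only the crude bounds $c^{(n)}_j\le n!/2$ or $c^{(n)}_0=1$. First I would recall the census of collinear critical points: under {\bf(H1)} (here $d=2$, $S=\diag(s_1,1)$ with $s_1>1$) the only lines carrying a $\csbc$ are the two coordinate axes, so there are $n!$ distinct $s_1$-$\csbc$ and $n!$ distinct $s_2$-$\csbc$, hence $2n!$ collinear $\sbc$ altogether; by Lemma~\ref{thm:morse-index-co-balanced} every $s_1$-$\csbc$ has Morse index exactly $n-1$. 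Since all masses are equal the collinear central configuration is unique up to relabelling, so the spectrum of $M^{-1}B(\widehat q)$ at a $\cccc$ $\widehat q$, and therefore the eigenvalues $\eta_1>\dots>\eta_k$, their multiplicities $\alpha_j$, the value $U(\widehat q)$, and the integer $j_n$, are all independent of the ordering; in particular $\gamma_{n-1}\ge n!$.

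Next I would identify the Morse index of the $s_2$-$\csbc$ (which here are exactly the $\cccc$). By Proposition~\ref{lem:indicescollinear}, and since {\bf(H2)} rules out the degenerate alternative (item~2 there), exactly one of two things happens: either $s_1U(\widehat q)\in(-\eta_j,-\eta_{j+1})$ for some $j\le k-1$, and then $s_1U(\widehat q)>-\eta_{j_n}$ together with the monotonicity of the $\eta_i$'s forces $j\ge j_n$; or $s_1U(\widehat q)>-\eta_k$, in which case the $s_2$-$\csbc$ are local minima of $\widehat U$. In the first case every $s_2$-$\csbc$ has the common Morse index $m=n-1-\sum_{i=0}^{j}\alpha_i$, so that, writing $\ell_n:=n-1-m=\sum_{i=0}^{j}\alpha_i\ge\sum_{i=0}^{j_n}\alpha_i>\log n$, the sequence $(\ell_n)_n$ is positively divergent with $\ell_n\gtrsim\log n$, while $1\le m\le n-3$ for $n$ large (the lower bound since $j\le k-1$ gives $\sum_{i=0}^{j}\alpha_i\le n-2$, the upper one since $m<n-1-\log n$). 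Plugging $(\ell_n)_n$ into Proposition~\ref{prop:asymptoticgrowth}(3) then gives, for every fixed $h\ge1$, that $c^{(n)}_m=c^{(n)}_{n-\ell_n-1}=o\big((n-h)!\big)$, hence $c^{(n)}_m\le(n-h)!$ once $n$ is sufficiently large.

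I would then finish exactly as in Theorem~\ref{thm:main1}. Writing the Morse relation $\sum_i\gamma_iz^i=p_n(z)+(1+z)R(z)$ with $R(z)=\sum_i r_iz^i$ and $r_i\ge0$, the level $n-1$ yields $r_{n-2}+r_{n-1}\ge\gamma_{n-1}-c^{(n)}_{n-1}\ge n!-(n-1)!$, while the level $m$ yields $r_{m-1}+r_m\ge\gamma_m-c^{(n)}_m\ge n!-(n-h)!$ (using $\gamma_m\ge n!$ because of the $n!$ many $s_2$-$\csbc$ of index $m$); as $m\le n-3$ the indices $m-1,m,n-2,n-1$ are pairwise distinct. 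Evaluating at $z=1$ and using $p_n(1)=n!$ from Proposition~\ref{lem:coeffofpoincarepolynomial}(1) we get
\[
\sum_i\gamma_i \;=\; n!+2\sum_i r_i \;\ge\; n!+2\big(2n!-(n-1)!-(n-h)!\big)\;=\;5n!-2(n-1)!-2(n-h)!,
\]
which is the asserted lower bound on the number of planar $\sbc$; subtracting the $2n!$ collinear ones leaves at least $3n!-2(n-1)!-2(n-h)!$ non‑collinear $\sbc$. In the remaining local‑minimum case $m=0$ one instead reads off $r_0+r_1\ge n!-c^{(n)}_0=n!-1$, and the same evaluation gives $\ge 5n!-2(n-1)!-2\ge5n!-2(n-1)!-2(n-h)!$ for any $h\ge1$, so the statement holds \emph{a fortiori}. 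The one genuinely new point over Theorem~\ref{thm:main1} — and the step I would double‑check most carefully — is that $\ell_n=n-1-m$ is an admissible sequence for Proposition~\ref{prop:asymptoticgrowth}(3): positively divergent, $\gtrsim\log n$, and $\le n-1$ so that $c^{(n)}_m$ is meaningful; the disjointness of the Morse levels $n-1$ and $m$ for $n$ large is then routine, and everything else is identical bookkeeping to Theorem~\ref{thm:main1}.
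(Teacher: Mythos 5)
Your proposal is correct and follows essentially the same route as the paper: bound the Morse index of the $s_2$-$\csbc$ by $n-\log n-1$ via Proposition~\ref{lem:indicescollinear}, invoke Item 3 of Proposition~\ref{prop:asymptoticgrowth} to get $c^{(n)}_m\le (n-h)!$, and then repeat the Morse-inequality bookkeeping of Theorem~\ref{thm:main1}, subtracting the $2n!$ collinear configurations at the end. Your added checks (nondegeneracy forcing $j\ge j_n$, disjointness of the levels, and the local-minimum case) are details the paper leaves implicit, not a different argument.
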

\begin{proof}
Under the assumptions, the Morse index $j$ of a $s_2-\csbc$ will be smaller than $n-\log n-1$. Using Item 3 of Proposition~\ref{prop:asymptoticgrowth}, we find an $h\ge 1$ such that 
\[
c^{(n)}_{j}\le (n-h)!
\]
and this yields by the Morse inequalities 
$$r_{j-1}+r_j \ge n! - (n-h)!$$
The claim follows by arguing as in the proof of Theorem~\ref{thm:main1}.
\end{proof}

We now drop the condition that all masses be equal and derive lower bounds on the number of $\sbc$ assuming (H1) and (H2). As already observed, in this case the jumps on the Morse index of $s_2-\csbc$
might occur at different values of $s_1$ for different orderings of the masses. Hence, a precise statement as in Theorem~\ref{thm:main1} is unfortunately no longer available. Imposing further conditions, 
one could still apply the asymptotic estimates proved in Section 3, thus obtaining better lower bounds. However, we refrain to do this in order to keep the exposition as 
reader friendly as possible.
Notice also that, by the pidgeonhole principle, at least $(n-1)!$ of the $s_2-\csbc$ must have the same Morse index; these will then contribute to the count of critical points provided the corresponding coefficient of the Poincar\'e polynomial be smaller than $(n-1)!$ (which is, in virtue of the discussion in Section~\ref{sec:poincare}, asymptotically very likely to be the case). 

\begin{thm}
\label{thm:main2}
Under Assumptions (H1) and (H2), if 
$$s_1> \max \Big \{\displaystyle - \frac{\eta_k}{U(\widehat q)} \ \Big |\ \widehat q \ \ \cccc \Big \},$$
then there are at least 
$$3n! - 2(n-1)! - 2$$
planar non-collinear $\sbc$. Otherwise, there are at least 
$$n!- 2(n-1)!$$
planar non-collinear $\sbc$. 
\end{thm}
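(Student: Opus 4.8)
Our plan is to run the same Morse-theoretic count that produced Theorem~\ref{thm:main1} (cases~1 and~3), now dispensing with the equal-mass hypothesis. We work throughout in the planar case $d=2$, so that Assumption (H1) reads $s_1>s_2=1$ and, by Lemma~\ref{thm_:poincare-polynomial}, the Poincar\'e polynomial of $\widehat{\mathbb S}$ is $P(t)=p_n(t)$; in particular $\beta_0=c_0^{(n)}=1$ and $\beta_{n-1}=c_{n-1}^{(n)}=(n-1)!$ by Proposition~\ref{lem:coeffofpoincarepolynomial}. Under (H1) every collinear $\sbc$ lies in an eigenspace of $S$, hence on one of the two coordinate axes, and combining this with Lemma~\ref{thm:cc-sbc-collinear} there are, exactly as in the proof of Theorem~\ref{thm:main1}, precisely $2n!$ collinear $\sbc$ --- namely $n!$ $s_1$-$\csbc$ on the $e_1$-axis and $n!$ $s_2$-$\csbc$ (that is, $\cccc$) on the $e_2$-axis --- and this is the quantity we shall subtract at the end. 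Under (H2), $\widehat U$ is a Morse function (the non-compactness of $\widehat{\mathbb S}$ being harmless since $U\to+\infty$ near $\Delta$) and the Morse inequalities $\sum_i\gamma_i z^i=p_n(z)+(1+z)R(z)$ hold with $R(z)=\sum_i r_iz^i$, $r_i\ge 0$.

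Next we extract the ingredient common to both cases. By Remark~\ref{index:cc} and Lemma~\ref{thm:morse-index-co-balanced}, every $s_1$-$\csbc$ is non-degenerate with Morse index exactly $n-1$, so $\gamma_{n-1}\ge n!$; reading off the coefficient of $z^{n-1}$, $\gamma_{n-1}=(n-1)!+r_{n-2}+r_{n-1}\ge n!$, hence $r_{n-2}+r_{n-1}\ge n!-(n-1)!$. In the first case the hypothesis $s_1>\max\{-\eta_k/U(\widehat q)\}$ over all $\cccc$ $\widehat q$ is exactly the statement that $\eta_k(\widehat q)+s_1U(\widehat q)>0$ for every $\cccc$, so Item~3 of Proposition~\ref{lem:indicescollinear} makes each of the $n!$ $s_2$-$\csbc$ a local minimum of $\widehat U$; thus $\gamma_0\ge n!$ and, from the coefficient of $z^0$, $r_0=\gamma_0-\beta_0\ge n!-1$. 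Evaluating the Morse identity at $z=1$ and using $R(1)\ge r_0+r_{n-2}+r_{n-1}$ gives $\sum_i\gamma_i\ge n!+2\bigl((n!-1)+(n!-(n-1)!)\bigr)=5n!-2(n-1)!-2$, and subtracting the $2n!$ collinear $\sbc$ leaves at least $3n!-2(n-1)!-2$ non-collinear ones. In the ``otherwise'' case we simply drop the (now unavailable) index-$0$ information and bound $R(1)\ge r_{n-2}+r_{n-1}$, obtaining $\sum_i\gamma_i\ge n!+2(n!-(n-1)!)=3n!-2(n-1)!$ and, after subtracting $2n!$, at least $n!-2(n-1)!$ non-collinear $\sbc$.

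The conceptual point --- and the reason for the dichotomy rather than a clean statement as in Theorem~\ref{thm:main1} --- is that for unequal masses $U(\widehat q)$ and the spectrum of $M^{-1}B(\widehat q)$ genuinely depend on the ordering of the masses, so the index jumps of the various $s_2$-$\csbc$ occur at different thresholds of $s_1$ and cannot be synchronized: only when $s_1$ exceeds every threshold $-\eta_k/U(\widehat q)$ at once can one guarantee that all $n!$ of the $s_2$-$\csbc$ land at the common index $0$, whose Poincar\'e coefficient $\beta_0=1$ is small; outside that regime one can no longer be sure that enough $s_2$-$\csbc$ share an index whose coefficient $c_j^{(n)}$ is below $n!$, and so nothing is extracted beyond the $s_1$-$\csbc$. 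This is the main (inherent) obstacle, and it is what forces the weak second bound. The only remaining items are routine and already appear in the proof of Theorem~\ref{thm:main1}: that the $2n!$ collinear configurations are genuinely non-degenerate $\sbc$, and that for $n=2$ the indices $0$, $n-2$, $n-1$ collapse, so that case is either excluded or checked by hand.
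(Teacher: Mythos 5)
Your argument is correct and is essentially the paper's own proof: the first bound follows the Part~3 computation of Theorem~\ref{thm:main1} (local minimality of the $s_2$-$\csbc$ via Item~3 of Proposition~\ref{lem:indicescollinear}, plus the index-$(n-1)$ contribution of the $s_1$-$\csbc$ and subtraction of the $2n!$ collinear configurations), and the ``otherwise'' bound uses only the $s_1$-$\csbc$ contribution, exactly as the paper indicates while leaving the details to the reader. Your minor refinements (using $r_0$ alone rather than $r_0+r_1$, and flagging the small-$n$ index collisions) are harmless and do not change the route.
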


\begin{proof}
 Under the given spectral gap assumption, we readily see that every $s_2-\csbc$ is
a local minimum of $\widehat U$. The claim follows as in Part 3 of Theorem~\ref{thm:main1}. The other claim is obtained using the Morse inequalities only taking into account the contribution given by $s_1-\csbc$.
The details are left to the reader. 
\end{proof}

For $d>2$ the Poincar\'e polynomial is given by
\[
p_n(t^{d-1}) = (1+ t^{d-1})(1+2t^{d-1}) \cdot \ldots \cdot (1+(n-1)t^{d-1}),
\]
and all coefficients in degree different from $j(d-1)$ vanish identically.  In the next proposition, we provide lower bounds for the number of $\sbc$ in case of equal masses
for some particular case: in the {\em worst possible case\/}, namely when the Morse indices of all $\csbc$ differ precisely by one, in an \textit{intermediate case}, namely when all Morse indices of $\csbc$ differ by at least two, and in the \textit{best possible case}, namely when the Morse indices of $\csbc$ are not integer multiples of $(d-1)$ 
for all but the $s_1$-collinear $\sbc$. 

\begin{prop}
For $m_1=...=m_n$, under Assumptions (H1) \& (H2), the following statements hold:
\begin{enumerate}
\item Suppose that, for every $j=1,...,d-1$, the Morse indices of $s_j-\csbc$ and $s_{j+1}-\csbc$ differ precisely by one. Then, there are at least 
\[
\left(d+\dfrac{1}{2}\right)n! - (n-1)!
\]
$\sbc$ of which at least
\[
\dfrac{n!}{2} - (n-1)!
\] 
are not collinear. For large values of $n\in\N$ the lower bound can be improved to 
\[
(n-(1+\gamma+\log n)) \cdot (n-1)!
\]
non collinear $\sbc$.
\vspace{1mm}
\item If, for every $j=1,\ldots,d-1$, the Morse indices of $s_j-\csbc$ and $s_{j+1}-\csbc$ differ at least by two, then there are at least 
\[
2 n! -2 (n-1)!
\]
non collinear $\sbc$.
\vspace{1mm}
\item Suppose that, for every $j\ge 1$, the Morse indices of $s_j-\csbc$  and $s_{j+1}-\csbc$ differ at least by two, and that for every $j>1$
the Morse index is not an integer multiple of $(d-1)$. Then,  there are at least 
\[
(d+1)n! -2 (n-1)!
\]
non collinear $\sbc$.
\end{enumerate}
\end{prop}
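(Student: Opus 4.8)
The plan is to run the argument of Theorem~\ref{thm:main1} in the $(d-1)$-graded setting. By Lemma~\ref{thm_:poincare-polynomial} the Betti numbers of $\widehat{\mathbb S}$ are $\beta_{i(d-1)}=c_i^{(n)}$ for $0\le i\le n-1$ and $\beta_\ell=0$ otherwise, and the Morse inequalities may be written $\gamma_\ell=\beta_\ell+r_\ell+r_{\ell-1}$ with $r_\ell\in\N_0$; evaluating $M(t)=P(t)+(1+t)R(t)$ at $t=1$ and using Proposition~\ref{lem:coeffofpoincarepolynomial}(1), the total number of $\sbc$ equals $n!+2\sum_\ell r_\ell$. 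Under Assumption {\bf(H1)} the only collinear $\sbc$ sit on the $d$ coordinate axes, $n!$ of them on each axis (exactly as subtracted in the proof of Theorem~\ref{thm:main1} for $d=2$), hence $dn!$ in all, so the number of non-collinear $\sbc$ is $2\sum_\ell r_\ell+n!-dn!$ and everything reduces to bounding $\sum_\ell r_\ell$ from below. Since the masses are equal, for each fixed $j$ the body-permutation action is transitive on the $n!$ $s_j$-$\csbc$ and preserves $\widehat U$, so they share one Morse index $k_j$; by Lemma~\ref{thm:morse-index-co-balanced} $k_1=(d-1)(n-1)$ is maximal, so $\beta_{k_1}=c_{n-1}^{(n)}=(n-1)!$, and (by the discussion following Lemma~\ref{thm:cc-sbc-collinear}) the $k_j$ decrease with $j$, strictly under the hypotheses of (1)--(3). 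The basic input is then $\gamma_{k_j}\ge n!$, i.e.\ $r_{k_j}+r_{k_j-1}\ge n!-\beta_{k_j}$ for $j=1,\dots,d$.

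For part~(2), consecutive $k_j$ differing by at least two makes the $d$ index pairs $\{k_j-1,k_j\}$ pairwise disjoint, so one adds the $d$ inequalities above; with $\beta_{k_1}=(n-1)!$ and $\beta_{k_j}\le n!/2$ for $j\ge2$ (Proposition~\ref{lem:coeffofpoincarepolynomial}(2)) this gives $\sum_\ell r_\ell\ge dn!-(n-1)!-(d-1)\frac{n!}{2}=\frac{d+1}{2}n!-(n-1)!$, hence at least $(d+2)n!-2(n-1)!$ $\sbc$ and, after removing the $dn!$ collinear ones, at least $2n!-2(n-1)!$ non-collinear ones. For part~(3), the extra hypothesis forces $(d-1)\nmid k_j$ and hence $\beta_{k_j}=0$ for all $j\ge2$, so the same summation upgrades this to $\sum_\ell r_\ell\ge dn!-(n-1)!$, i.e.\ at least $(2d+1)n!-2(n-1)!$ $\sbc$ and at least $(d+1)n!-2(n-1)!$ non-collinear ones.

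For part~(1), the hypothesis together with $k_1=(d-1)(n-1)$ forces the $k_j$ to be the consecutive integers $(d-1)(n-1),(d-1)(n-1)-1,\dots,(d-1)(n-2)$; since $d\ge3$, among $\beta_{k_1},\dots,\beta_{k_d}$ only $\beta_{k_1}=c_{n-1}^{(n)}=(n-1)!$ and $\beta_{k_d}=c_{n-2}^{(n)}$ are non-zero. As the pairs now overlap, instead of passing to a disjoint sub-collection one adds the $d$ Morse equalities as they stand: with $K=(d-1)(n-1)$ the $r$-indices on the right are $K,K-1,\dots,K-d$, each with coefficient at most $2$, so $dn!\le\sum_{j=1}^d\gamma_{k_j}\le(n-1)!+c_{n-2}^{(n)}+2\sum_\ell r_\ell$, i.e.\ $\sum_\ell r_\ell\ge\frac12\big(dn!-(n-1)!-c_{n-2}^{(n)}\big)$. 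The crude bound $c_{n-2}^{(n)}\le n!/2$ then yields at least $(d+\frac12)n!-(n-1)!$ $\sbc$ and $\frac{n!}{2}-(n-1)!$ non-collinear ones, while the sharp estimate $c_{n-2}^{(n)}\sim(\gamma+\log(n-1))(n-1)!$ of Proposition~\ref{lem:coeffofpoincarepolynomial}(4) improves, for $n$ large, the count of non-collinear $\sbc$ to $n!-(1+\gamma+\log n)(n-1)!=\big(n-(1+\gamma+\log n)\big)(n-1)!$.

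The Morse-theoretic bookkeeping above is routine, identical in spirit to Section~\ref{sec:lower-bounds}. The two points that need care are: first, the claim that for equal masses all $s_j$-$\csbc$ have a single common Morse index with exactly $n!$ configurations on each axis (so that $\gamma_{k_j}\ge n!$), which rests on transitivity of the body-permutation action and on the count of collinear central configurations; and second, keeping exact track of which of $k_1,\dots,k_d$ are multiples of $d-1$. This second bookkeeping is what separates parts~(2) and~(3), and in part~(1) it is precisely the unavoidable ``large'' Betti number $c_{n-2}^{(n)}$ --- located in degree $(d-1)(n-2)$, that of the $\cccc$ index, and growing like $(\log n)(n-1)!$ --- that both costs a factor in the constant and makes the logarithmic refinement from Proposition~\ref{lem:coeffofpoincarepolynomial}(4) relevant exactly there.
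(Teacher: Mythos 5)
Your argument is correct and follows essentially the same route as the paper's proof: the Morse inequalities $M(t)=P(t)+(1+t)R(t)$ for $\widehat U$ on $\widehat{\mathbb S}$, the input $\gamma_{k_j}\ge n!$ for the common Morse index of the $n!$ $s_j$-$\csbc$ on each axis, the coefficient bounds of Proposition~\ref{lem:coeffofpoincarepolynomial} (with $c_{n-2}^{(n)}$ giving both the $n!/2$ loss and the logarithmic refinement in part (1)), evaluation at $t=1$, and subtraction of the $dn!$ collinear configurations. Your explicit handling of the overlapping index pairs in part (1) and the transitivity remark justifying the common Morse index are just slightly more detailed versions of steps the paper leaves implicit.
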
 
\begin{proof}
$\ $ \\
\begin{enumerate}
\item In this case, the Morse indices of $\csbc$ are precisely 
\[
(d-1)(n-2), (d-1)(n-2)+1,...,(d-1)(n-1),
\]
so that the corresponding 
coefficients of the Poincar\'e polynomial vanishes only for the smallest resp. largest Morse index. The full Morse inequalities and Proposition~\ref{lem:coeffofpoincarepolynomial}  yield now for the coefficients of the remainder $R(z)$:
\begin{align*}
r_{(n-1)(d-1)-1} + r_{(n-1)(d-1)} &\ge n! - (n-1)!\\
r_{(n-1)(d-1)-j-1} + r_{(n-1)(d-1)-j} &\ge n! \quad  j =1,...,d-2,\\
r_{(n-2)(d-1)-1} + r_{(n-2)(d-1)} &\ge n! - c_{n-2}^{(n)} \ge \frac{n!}{2},
\end{align*}
and hence 
$$\sum_{j} \gamma_j \ge n! + (d-1)n! - (n-1)! + \dfrac{n!}{2} = \left(d+\dfrac12\right)n! -(n-1)!$$
The asymptotic estimate is obtained using again Item 3 of Proposition~\ref{lem:coeffofpoincarepolynomial}.

\item Denoting by $\mu_j$ the Morse index of $s_j-\csbc$ for $j>1$, from 
\begin{align*}
r_{(n-1)(d-1)-1} + r_{(n-1)(d-1)} &\ge n! - (n-1)!\\
r_{\mu_j-1} + r_{\mu_j} &\ge n! - c_{\mu_j}^{(n)} \ge \frac{n!}{2},
\end{align*}
we obtain 
$$\sum_{j} \gamma_j \ge n! + 2 (n! - (n-1)!) + (d-1)n! = (d+2)n! - 2(n-1)!$$
\item In this case we have 
$$r_{\mu_j-1} + r_{\mu_j} \ge n! - c_{\mu_j}^{(n)} = n! \quad \forall j>1$$
and hence 
$$\hspace{28mm} \ \  \sum_{j} \gamma_j \ge n! + 2 (n! - (n-1)!) + 2(d-1)n!=(2d+1)n! - 2(n-1)! \hspace{28mm} \ \qedhere$$
\end{enumerate}
\end{proof}

In the proposition above we applied Morse theory on the whole collision free configuration sphere in $\R^{nd}$,
 and as we can see, in two of the three cases we obtain lower bounds independent of the dimension $d$. 
 Implementing a strategy similar to the one used in Corollary~\ref{cor:continua}, we can instead give 
a general lower bound (i.e. independent of the Morse indices of the $\csbc$ as well as of the fact that the masses be equal or not) on the number of non-collinear $\sbc$ which grows quadratically in $d$. 

Before doing that in the next theorem, we shall notice that Part 1 of the proposition above holds also for non-equal masses provided 
all $s_j$ are sufficiently close to one. Indeed, also in this case all $s_j-\csbc$ have the same Morse index and the difference of Morse indices between $s_j-\csbc$ and $s_{j+1}-\csbc$ is always equal one. 
This shows that, at least for $s_j$ close to one, all $\csbc$ contribute to the count of $\sbc$, even if the masses are not all equal. Notice also that, ignoring $s_j-\csbc$ for $j\ge 2$ (as done in Theorem~\ref{thm:main1})
does not provide any non-trivial lower bound for $d>2$. 

\begin{thm}
For fixed $d \ge 2$, under Assumptions (H1) \& (H2), there are at least 
$$\frac{d(d-1)}{2} \cdot \big (n! - 2(n-1)!\big )$$
planar non-collinear $\sbc$. \qed
\label{thm:main3}
\end{thm}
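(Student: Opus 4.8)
The plan is to reduce the $d$-dimensional problem to the $\binom{d}{2}$ planar problems obtained by confining all the bodies to a coordinate $2$-plane, in the same spirit as Corollary~\ref{cor:continua}, and then to apply Theorem~\ref{thm:main2} to each of them. Fix indices $1\le i<j\le d$ and set $\Pi_{ij}:=\text{span}\,\{e_i,e_j\}\subset\R^d$. The first step is the observation that a configuration $q=(q_1,\ldots,q_n)\in\widehat{\mathbb X}$ all of whose bodies $q_k$ lie in $\Pi_{ij}$ is an $\sbc$ of the $d$-dimensional problem if and only if, viewed as a configuration of the planar $n$-body problem on $\Pi_{ij}\cong\R^2$, it is an $\sbc$ for the matrix $\diag(s_i,s_j)$. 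Indeed, both $\nabla U(q)$ and $\widehat S Mq$ have all their blocks in $\Pi_{ij}$ as soon as $q$ does, so Equation~\eqref{eq:s-balanced-conf} splits into the planar $\sbc$ equation on the $(e_i,e_j)$-components and the trivial identity $0=0$ on the remaining $d-2$ components. Since the problem is scaling invariant, after dividing by $s_j$ we may replace $\diag(s_i,s_j)$ with $\diag(s_i/s_j,1)$; as $i<j$ forces $s_i>s_j$ under {\bf(H1)}, this is precisely Assumption {\bf(H1)} for the planar problem, with the same masses $m_1,\ldots,m_n$.

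The second step is to verify that {\bf(H2)} descends to each planar sub-problem. At a configuration $q$ contained in $\Pi_{ij}$, reorder the $nd$ coordinates so that the $2n$ ``planar'' $(e_i,e_j)$-components come first, followed by the $d-2$ remaining ``normal'' families of $n$ components. The block structure~\eqref{eq:block-structure-2} then shows that the Hessian $H(q)$ of Lemma~\ref{thm:Hessian-SBC} is block diagonal with respect to this splitting: its planar block coincides with the Hessian of the planar $\sbc$ problem at $q$, while its $k$-th normal block, $k\notin\{i,j\}$, has the form $M^{-1}B(q)+s_kU(q)I_n$. Moreover the tangent space $T_q\widehat{\mathbb S}$ splits compatibly, since $\widehat S Mq\in\Pi_{ij}$ and hence the normalization constraint only involves the planar components. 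Consequently, if $q$ is a non-degenerate critical point of $\widehat U$ in $\R^d$, then its planar block is non-degenerate, \ie $q$ is a non-degenerate $\sbc$ of the planar problem; thus {\bf(H2)} in $\R^d$ implies {\bf(H2)} for each of the $\binom{d}{2}$ planar sub-problems.

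Granting these reductions, Theorem~\ref{thm:main2}, applied to the planar problem on $\Pi_{ij}$ for the prescribed masses, produces at least $n!-2(n-1)!$ planar non-collinear $\sbc$ lying in $\Pi_{ij}$ (this bound holds in either of the two cases of Theorem~\ref{thm:main2} and irrespective of whether the masses are equal), and by the first step each of these is a planar non-collinear $\sbc$ of the $d$-dimensional problem. It remains to exclude double counting across distinct pairs. For this one uses that a non-collinear configuration contained in $\Pi_{ij}$ whose center of mass is at the origin has affine hull equal to a linear $2$-dimensional subspace of $\Pi_{ij}$, hence equal to $\Pi_{ij}$ itself; since $\Pi_{ij}\ne\Pi_{i'j'}$ whenever $\{i,j\}\ne\{i',j'\}$, no such configuration can be confined to two distinct coordinate planes at once. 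Therefore the families of non-collinear $\sbc$ produced for different pairs are pairwise disjoint, and summing over the $\binom{d}{2}=\frac{d(d-1)}{2}$ pairs gives at least $\frac{d(d-1)}{2}\big(n!-2(n-1)!\big)$ planar non-collinear $\sbc$, as claimed.

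The step requiring the most care is the descent of Assumption {\bf(H2)}: non-degeneracy (and Morse indices) are understood with respect to the full configuration sphere $\widehat{\mathbb S}\subset\R^{nd}$, so one must make sure that restricting to a coordinate $2$-plane neither introduces spurious degeneracies nor discards critical points — which is exactly what the block decomposition of $H(q)$ together with the compatible splitting of $T_q\widehat{\mathbb S}$ guarantee. The distinctness bookkeeping over the $\binom{d}{2}$ planes, and the verification that Theorem~\ref{thm:main2} applies with no hypothesis on the masses, are then routine.
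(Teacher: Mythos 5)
Your proposal is correct and follows essentially the same route as the paper's proof: restrict to each coordinate $(e_i,e_j)$-plane, observe that (H1) and (H2) descend to the planar sub-problem, apply Theorem~\ref{thm:main2} there, and count the $\frac{d(d-1)}{2}$ planes separately. Your write-up merely makes explicit two points the paper asserts without detail (the block-diagonal Hessian argument for the descent of (H2), and the affine-hull argument for disjointness across distinct coordinate planes), both of which are correct.
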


\begin{proof}
For all $1\le i<j\le d$, the manifold $\mathcal P_{i,j}$ of configurations which are contained in the $(e_i,e_j)$-plane is invariant under the gradient flow of $\widehat U$. Therefore, we can do Morse theory on $\mathcal P_{i,j}$, which in turn is nothing else but the collision free configuration space 
in the plane. Notice indeed, that Assumptions (H1) and (H2) hold also for the restriction to $\mathcal P_{i,j}$. 
Therefore, using Theorem~\ref{thm:main2}, we obtain that there are at least 
$$n! - 2(n-1)!$$
non-collinear $\sbc$ in the $(e_i,e_j)$-plane. Furthermore, Assumption (H1) implies that the symmetry group of Equation~\eqref{eq:s-balanced-conf} is trivial, and hence
planar $\sbc$ contained in different planes can be counted separately. This together with the fact the number of different pairs $1\le i<j\le d$ is precisely $d(d-1)/2$ implies the claim.
\end{proof}

Also in this case, we could obtain better lower bounds by a case distinction and implementing the asymptotic growth estimates for the coefficients of the Poincar\'e polynomial. Again, we refrain
to do this to obtain better readability.  We finish this section noticing that information on the inertia indices of planar $\sbc$ might help to improve the above result by e.g. showing the existence of non-planar $\sbc$. This is indeed object 
of ongoing research. 

%%%%%%%%%%%%%%%%%%%%%%%%%%%%%%%%
%%
%%
%%
%%
%%%%%%%%%%%%%%%%%%%%%%%%%%%%%%%%

\section{The $45^\circ$ theorem for balanced configurations}\label{sec:45gradi}

The well-known {\sc $45^\circ$-Theorem\/} for collinear $\cc$ is  a ``global version'' of the fact that  collinear $\cc$ are an attractor for the projectivized gradient flow of $\widehat U$. This theorem, indeed, consists in providing an explicit neighborhood of the manifold of collinear configurations such that orbits of the gradient flow of $\widehat U$ starting off from such a neighborhood become more and more collinear. In this section we will extend the validity of the $45^\circ$-Theorem to $s_1-\csbc$ in $\R^3$. More precisely, we will show that there is a neighborhood  of the manifold of collinear configurations along the $e_1$-axis such that orbits of the gradient flow of $\widehat U$ starting off from such a neighborhood become  more and more collinear along the $e_1$-axis. 

Such a $45^\circ$-Theorem for $s_1-\csbc$ holds for any $S=\text{diag}\, (s_1,s_2,1)$, and actually even for every $d\ge 3$. However, to keep the discussion as elementary as possible we assume hereafter that $S=\diag (s,1,1)$ for some $s>1$, which is actually the case we will be interested in in Section~\ref{sec:homology}. 
We set
\[
S_M\= \begin{bmatrix}
	m_1 S & & \\
	& \ddots &\\
	&& m_n S 
\end{bmatrix}
\]
The gradient flow equation of $\widehat U$ is given by 
\begin{equation}\label{eq:gradient-flow}
	\dot q= S_M^{-1} \nabla U(q)+ U(q)q= \widetilde \nabla \widehat U(q)
\end{equation}

\begin{dfn}\label{def:angle-balanced}
The {\sc collinearity angle} of the configuration vector $q\in \widehat{\mathbb S}$  w.r.t. the $e_1$-direction is
\begin{equation}\label{eq:angolo-balanced}
\vartheta(q)\=\max_{i \neq j} \sphericalangle(q_i-q_j, e_1).
\end{equation}
Here $\sphericalangle(v,w)$ denotes the angle between the two vectors $v,w\in \R^3$. 
\end{dfn}

\begin{thm}[$45^\circ$-Theorem for $s_1-\csbc$] \label{thm_45-theorem} Consider the $S$-balanced configuration problem~\eqref{eq:s-balanced-conf} on $\R^3$ with 
$$S= \text{diag}\, (s,1,1), \qquad s>1.$$
Then the set 
$$\Big \{q\in\widehat S\ \Big |\ q \text{ is collinear along the } e_1\text{-axis}\Big \}$$
is an attractor for the gradient flow of $\widehat U$. More precisely, the
function pointwise defined in Equation~\eqref{eq:angolo-balanced} is a Lyapunov function on 
	\[
	\mathscr V\=\Set{q| \vartheta(q) \in (0, 45^\circ]}
	\]
	for $\widetilde \nabla \widehat U$. In particular there are no non-collinear $\sbc$ for $ \vartheta(q(t)) \in (0, 45^\circ]$.
\end{thm}

\begin{rmk}
A word by word generalization of Moeckel's $45^\circ$-Theorem (stating that the set of all configurations which are collinear along some line is an attractor for the 
gradient flow of $\widehat U$) cannot hold for $\sbc$, as $\csbc$ can be local minima of $\widehat U$ for suitable choices of $s>1$. In fact, a priori there is not even a 
reason why the set of collinear configurations  should be invariant under the gradient flow.
\qed
\end{rmk}

\begin{rmk}\label{rmk:complex}
$S$-balanced configuration in $\R^3$ with $S=\text{diag}\, (s,1,1)$ are of great interest since they produce relative equilibria of the $n$-body problem in $\R^4$ via ``complexification''. Notice that, 
unlike the case considered in Sections~\ref{sec:inertia-balanced}-\ref{sec:lower-bounds}  (where all eigenvalues of $S$ had multiplicity one), here there is no need to complexify the plane $\{0\}\times \R^2$, 
but rather we just have to complexify the (real) line corresponding to the eigenvalue $s$. This fact can be equivalently explained as follows: if we consider the $S$-balanced configurations problem in $\R^4$ with matrix $S=\text{diag}\, (s,s,1,1)$,
then restricting our attention only to $\sbc$ contained in $\{0\}\times \R^3$ would yield precisely the $S$-balanced problem in $\R^3$ with matrix $S=\text{diag}\, (s,1,1)$ considered above. 
\qed
\end{rmk}

\begin{proof}
We set $q= \trasp{(x,y,z)}$. Choose $(i,j)$ such that 
	$$\vartheta(q)= \sphericalangle(q_i-q_j, e_1),\qquad x_i-x_j>0.$$ 
	By construction, all other $q_k$ must lie in the intersection of the two cones of angle $\theta(q)$ with respect to $e_1$ at $q_i$ and $q_j$, which consists of the segment 
	joining $q_j$ with $q_i$ and the outer half-cones. Since by assumption $\theta(q)\le 45^\circ$, the two half-cones lie on opposite sides of the orthogonal bisector $B$ of $q_j$ and 
	$q_i$, which we recall is the plane orthogonal to $q_i-q_j$ through the middle point of the segment joining $q_j$ to $q_i$. By a simple minimality argument we can suppose without loss 
	of generality that none of the remaining $q_k$ lie in the segment joining $q_j$ to $q_i$. 
	
%By using the $\SO(2)$-symmetry provided by rotations in the $\widehat{yOz}$-plane, we can w.l.o.g. suppose that $q_i-q_j$ belongs to the 	$\widehat{xOz}$-plane. 
	Since for all $k$ we have
\begin{align*}
\dot q_k &= m_k^{-1} S^{-1}\nabla_k U(q)+U(q) q_k\\ &= 
m_k^{-1} \nabla_k U(q)+U(q) q_k- m_k^{-1} (1-s^{-1}) \cdot \text{pr}_1( \nabla_k U(q)),
\end{align*}
where 
$$\text{pr}_1(\nabla_k U(q)) = \left ( \sum_{\ell \neq k} m_\ell m_k \cdot \frac{x_\ell-x_k}{r_{\ell k}^3} \right )\cdot e_1$$
is the projection of $\nabla_k U(q)$ onto the $x$-axis, we get that 
\begin{align}\label{eq:*}
\dot q_i-\dot q_j&= m_i^{-1}\nabla_i U(q)- m_j^{-1} \nabla_j U(q)\\
&-
(1-s^{-1}) \left[m_i^{-1}\cdot \text{pr}_1 (\nabla_i U(q)) - m_j^{-1}\cdot \text{pr}_1(\nabla_j U(q))\right] +U(q) (q_i-q_j).\nonumber
\end{align}
Set $\alpha(t)\=\sphericalangle(q_i(t)-q_j(t), e_1)$. By the assumption $x_i-x_j>0$, we have
\[
\cos \alpha(t)=\left \langle \dfrac{q_i(t)-q_j(t)}{r_{ij}(t)}, e_1\right \rangle >0
\]
	Differentiating w.r.t. $t$  yields (the dependence on $t$ is hereafter dropped for notational convenience)
	\begin{align}\label{eq:**}
	-\dot \alpha \sin \alpha& = \left\langle \dfrac{\dot q_i-\dot q_j}{r_{ij}},  e_1\right\rangle - \left\langle\dfrac{\langle(q_i-q_j),  e_1\rangle}{r_{ij}^3}(q_i-q_j),(\dot q_i-\dot q_j)\right\rangle\\
	&= 
\left\langle	 \dfrac{\dot q_i-\dot q_j}{r_{ij}},\left[e_1- \dfrac{\langle(q_i-q_j), e_1\rangle}{r_{ij}^2}(q_i-q_j)\right]\right\rangle\nonumber  \\
&= \left\langle \dfrac{\dot q_i-\dot q_j}{r_{ij}}, e_1^B \right\rangle\nonumber 
	\end{align}
	where $e_1^B$ denotes the orthogonal projection of $e_1$ on the orthogonal bisector $B$ of $q_i$ and $q_j$. Observe that 
	$$\langle e_1,e_1^B \rangle >0.$$
	%\[
	%\|e_1^\perp\|= \sin \alpha\quad  \textrm{ and }\quad  e_1^\perp= \sin \alpha(\sin \alpha, 0, -\cos \alpha)
	%\]
	By using Equations~\eqref{eq:*} and \eqref{eq:**} and by taking into account that $\langle q_i-q_j, e_1^B\rangle =0$, we get  that
	\begin{align}\label{eq:***}
		-\dot \alpha \sin \alpha &= \left\langle\dfrac{1}{r_{ij}}\big[m_i^{-1}\nabla_i U(q)- m_j^{-1}\nabla_j U(q) \big], e_1^B\right\rangle\\
							 &- \dfrac{1-s^{-1}}{r_{ij}}\left\langle m_i^{-1}\cdot \text{pr}_1 (\nabla_i U(q))- m_j^{-1}\cdot \text{pr}_1(\nabla_j U(q)), e_1^B\right\rangle \nonumber \\
&=\dfrac{1}{r_{ij}}\left(\sum_{k \neq i}m_k \dfrac{q_k-q_i}{r_{ik}^3} e_1^B - \sum_{k \neq j}m_k\dfrac{q_k-q_j}{r_{jk}^3}e_1^B\right)\\ 
&+ \dfrac{1-s^{-1}}{r_{ij}} \left(\sum_{k \neq j}m_k \dfrac{x_k-x_j}{r_{jk}^3} - \sum_{k \neq i}m_k\dfrac{x_k-x_i}{r_{ik}^3}\right) \langle e_1,e_1^B\rangle. \nonumber
	\end{align} 
	 By arguing precisely as in \cite[P. 62-63]{Moe94}, we get that that the first summand in the (RHS) of Equation~\eqref{eq:***} is non-negative (actually, positive provided $q$ is not collinear along the line 
	 determined by $q_j$ and $q_i$). The last summand in the RHS of Equation~\eqref{eq:***} is the product of the positive number $(1-s^{-1})r_{ij}^{-1}\cdot \langle e_1,e_1^B\rangle$ with 
\begin{equation}\label{eq:ff2}
\sum_{k \neq j,i}m_k \left ( \dfrac{x_k-x_j}{r_{jk}^3}  - \dfrac{x_k-x_i}{r_{ik}^3}\right ) + m_i \dfrac{x_i-x_j}{r_{ji}^3} - m_j \dfrac{x_j-x_i}{r_{ij}^3}.
\end{equation}
The last two terms in Equation~\eqref{eq:ff2} are positive by the assumption $x_i-x_j>0$. Moreover, since by assumption none of the $q_k$ lies in the segment joining $q_j$ with $q_i$, for every $k\neq i,j$ we either have $x_k>x_i>x_j$ or $x_k<x_j<x_i$. If the former case holds, then $q_k$ lies in the outer cone at $q_i$ and hence the quantities
$$\dfrac{x_k-x_j}{r_{jk}^3} \quad \text{and} \quad \dfrac{x_k-x_i}{r_{ik}^3}$$ 
are both positive and the first one is larger than the second one. If the latter case holds, then $q_k$ lies in the outer cone at $q_j$ and the quantities 
$$\dfrac{x_k-x_j}{r_{jk}^3} \quad \text{and} \quad \dfrac{x_k-x_i}{r_{ik}^3}$$ 
are both negative and the second one is in absolute value larger than the first one. Overall, this shows that the expression in Equation~\eqref{eq:ff2} (thus also the RHS of Equation~\eqref{eq:***}) is always positive. Therefore, by~\eqref{eq:***} we can infer that $-\dot \alpha \sin \alpha >0$, thus completing the proof. 
	\end{proof}
A direct consequence of the $45^\circ$-theorem for $s_1-\csbc$ is the following.
\begin{cor}\label{thm:collinear-attractor}
	The set 
	\[
	\mathbb  Y\= \Set{q=\begin{bmatrix}
		\begin{pmatrix}
			x_1\\0\\0
		\end{pmatrix}, \ldots ,\begin{pmatrix}
			x_n\\0\\0
		\end{pmatrix}\end{bmatrix} \in \R^{3n} | \sum_{i=1}^n m_ix_i =0,\  \|q\|_{S_M}^2=\dfrac{1}{s}}
	\]
	is an {\sc attractor\/} for the gradient flow of $\widehat  U$. 
	
	The set $\mathscr V$ is invariant under the gradient flow and the function $t \mapsto \vartheta \big(q(t)\big)$ is strictly decreasing along gradient flow lines, provided $\vartheta \big(q(0)\big) \in (0, 45^\circ]$. \qed
\end{cor}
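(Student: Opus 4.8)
The statement is essentially a dynamical reformulation of Theorem~\ref{thm_45-theorem}, and the plan is to make this precise by turning the pointwise inequality established in that proof into a strict Lyapunov function statement for the flow $\dot q=\widetilde\nabla\widehat U(q)$ of \eqref{eq:gradient-flow}. Concretely I would: (i) promote ``$\dot\alpha<0$'' to a strict decay estimate $D^+\vartheta(q(t))<0$ along orbits in $\mathscr V$, handling the minimum in the definition of $\vartheta$ by Dini derivatives; (ii) deduce from it positive invariance of $\mathscr V$, strict monotonicity of $t\mapsto\vartheta(q(t))$, and invariance of $\mathbb Y=\{\vartheta=0\}$; (iii) run a LaSalle-type argument, using that there are no non-collinear $\sbc$ in $\mathscr V$, to conclude that $\mathbb Y$ attracts every forward orbit meeting $\mathscr V$.

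For step (i): $\vartheta=\min_{i\neq j}\vartheta_{ij}$ is a minimum of the finitely many functions $\vartheta_{ij}(q)\=\sphericalangle(q_i-q_j,e_1)$, each smooth where $q_i\neq q_j$, so $\vartheta$ is locally Lipschitz on $\widehat{\mathbb S}$ and its upper right Dini derivative along an orbit satisfies $D^+\vartheta(q(t))\leq\min\{\tfrac{d}{dt}\vartheta_{ij}(q(t))\mid(i,j)\ \text{realises the minimum at}\ q(t)\}$. By the computation in the proof of Theorem~\ref{thm_45-theorem}, for any such minimising pair one has $\tfrac{d}{dt}\vartheta_{ij}(q(t))<0$ as soon as $\vartheta(q(t))\in(0,45^\circ]$; hence $D^+\vartheta(q(t))<0$ whenever $q(t)\in\mathscr V$. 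Since a Lipschitz function with everywhere negative Dini derivative is strictly decreasing, $t\mapsto\vartheta(q(t))$ strictly decreases, and in particular stays $\leq45^\circ$ for all $t\geq0$. Moreover $\mathbb Y=\{\vartheta=0\}$ is invariant: if $q$ is collinear along $e_1$ then $\nabla_iU(q)\in\langle e_1\rangle$ for every $i$, so both $S_M^{-1}\nabla U(q)$ and $U(q)q$ are collinear along $e_1$ and tangent to the normalisation, whence $\widetilde\nabla\widehat U$ is tangent to $\mathbb Y$. Since $q(0)\notin\mathbb Y$, the orbit cannot reach $\{\vartheta=0\}$ in finite time, so $q(t)\in\mathscr V$ for all $t\geq0$ and $\mathscr V$ is positively invariant. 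Together with the strict monotonicity of $\vartheta$, this is already the second assertion of the corollary.

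For step (iii), fix $q_0\in\mathscr V$; then $\vartheta(q(t))$ decreases to a limit $\vartheta_\infty\in[0,45^\circ)$. If $\vartheta_\infty>0$ and the orbit stays in a compact part of $\widehat{\mathbb S}$, then $\omega(q_0)$ is a non-empty compact invariant subset of $\{\vartheta=\vartheta_\infty\}\subset\mathscr V$ on which $\vartheta$ is constant; but the infimum of $\vartheta$ over $\omega(q_0)$ is attained at some $p\in\omega(q_0)$, and the forward orbit of $p$ remains in $\omega(q_0)$ while $D^+\vartheta<0$ forces $\vartheta$ strictly below $\vartheta(p)$, a contradiction. Hence $\vartheta_\infty=0$, \ie the orbit becomes asymptotically collinear along $e_1$ and converges to $\mathbb Y$. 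Combined with the invariance of $\mathbb Y$, this exhibits $\mathbb Y$ as an attractor, with basin of attraction containing the neighbourhood $\mathbb Y\cup\mathscr V$.

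I expect the one genuinely delicate point to be the compactness input in step (iii), and relatedly the precise sense in which $\mathbb Y$ is an attractor. The strict Lyapunov estimate by itself only yields $\vartheta(q(t))\to0$ along forward orbits in $\mathscr V$, and since the flow of $\widetilde\nabla\widehat U$ increases $\widehat U$ it does not a priori keep orbits away from the collision set $\Delta$; one therefore has either to restrict the basin to a sufficiently small collision-free neighbourhood of the compact set $\mathbb Y$ and rule out escape to $\Delta$, or to interpret ``attractor'' in the weaker sense that every orbit issuing from $\mathscr V$ becomes asymptotically collinear along $e_1$. The remaining ingredients --- the Dini bookkeeping for the minimum, the strict monotonicity of $\vartheta$, the positive invariance of $\mathscr V$, and the invariance of $\mathbb Y$ --- are routine once Theorem~\ref{thm_45-theorem} is in hand.
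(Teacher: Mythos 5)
Since the paper states this corollary with no separate proof --- it is presented as an immediate consequence of Theorem~\ref{thm_45-theorem} --- your proposal follows the same route the paper intends, and your elaboration (Dini derivatives for the minimum, positive invariance of $\mathscr V$, strict decay of $\vartheta$, tangency of the gradient field to $\mathbb Y$, LaSalle for attraction) is the natural way to make it precise. The caveat you raise at the end is also genuine: the flow increases $\widehat U$, so nothing in the Lyapunov estimate alone prevents forward orbits in $\mathscr V$ from drifting toward $\Delta$, and the paper silently reads ``attractor'' in the weaker sense you describe (orbits become asymptotically collinear); your hedge is the honest reading, not a defect.

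There is, however, one step that does not close if one takes Definition~\ref{def:angle-balanced} literally: you identify $\mathbb Y$ with $\{\vartheta=0\}$ and deduce convergence to $\mathbb Y$ from $\vartheta(q(t))\to 0$. With $\vartheta$ defined as a \emph{minimum} over pairs, $\vartheta(q)=0$ only says that \emph{some} difference vector $q_i-q_j$ is parallel to $e_1$, so $\{\vartheta=0\}$ is strictly larger than $\mathbb Y$, and decay of the minimal angle does not force the whole configuration to become collinear. The quantity that makes both the corollary and the positivity argument work (the step borrowed from Moeckel, which uses that \emph{every} difference vector lies in the $45^\circ$ cone about the axis, as in the classical $45^\circ$-theorem) is the \emph{maximal} angle $\max_{i\neq j}\sphericalangle(q_i-q_j,e_1)$; with that reading, your steps applied to the maximising pair go through verbatim, $\{\vartheta=0\}=\mathbb Y\cap\mathbb S$, and the LaSalle conclusion really does give attraction to $\mathbb Y$. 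So the gap is inherited from the paper's definition rather than introduced by you, but as written your conclusion ``$\vartheta_\infty=0$, hence the orbit converges to $\mathbb Y$'' is only valid under the max-interpretation, and you should say so explicitly (or prove separately that the minimal and maximal angles are simultaneously controlled, which the paper does not do either).
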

\begin{rem}
Corollary~\ref{thm:collinear-attractor} allows us to remove the set $\mathbb Y$ from $\widehat{\mathbb S}$ before quotienting out by the (diagonal) $\SO(2)$-action on $\widehat{\mathbb S}$ induced 
by rotations in the $\widehat{yOz}$-plane, which we recall is the symmetry group of 
Equation~\eqref{eq:s-balanced-conf} in case $S=\text{diag}\, (s, 1,1)$. Since $\mathbb Y$ is precisely the singular  set of the $\SO(2)$-action, we obtain that 
\begin{equation}
\overline{\mathcal S}\= \Big ( \widehat{\mathbb S} \setminus \mathbb  Y\Big )/\SO(2)
\label{quotientmanifold}
\end{equation}
is a manifold. Such an argument is actually very similar to the one used by Merkel in \cite{Mer08} in the case of spatial $\cc$ (the main difference being that we 
remove only collinear configuration in the $e_1$-direction instead of the whole manifold of collinear configurations), unlike in Merkel's case however the lower 
bound that we will obtain is much larger than McCord's lower bound on the number of planar $\cc$ (cfr. Section~\ref{sec:homology}). 
 \qed
\end{rem}
The goal of the next section will be to compute the Poincar\'e polynomial of $\overline{\mathcal S}$. This will give us - via the ``complexification argument'' discussed in Remark~\ref{rmk:complex} - a lower bound on the number of $\sbc$ in $\R^4$ with matrix $\text{diag}\, (s,s,1,1)$, that will be then compared with the lower bounds provided by Theorems~\ref{thm:main1} and~\ref{thm:main2} via the reduction to {\bf{(H1)} }argument.

We finish this section stating the general $45^\circ$-theorem for $s_1-\csbc$. The proof can be obtained from the one of Theorem~\ref{thm_45-theorem} with some minor modifications and will be omitted. 
\begin{thm}[$45^\circ$-theorem for $s_1$-collinear $\sbc$]\label{thm:45gradi-generale}
For $d\ge 2$, let $s_1>s_2>...>s_{d-1}>s_d=1$ be positive real numbers and $\mu_1,\ldots,\mu_d\in \N$ be natural numbers. Consider the $S$-balanced configurations problem 
\eqref{eq:s-balanced-conf} on $\R^D$, $D:= \mu_1+\ldots+\mu_d\ge d$, with 
\[
S= \diag(\underbrace{s_1,...,s_1}_{\mu_1\text{-times}}, ..., \underbrace{s_{d-1},...,s_{d-1}}_{\mu_{d-1}\text{-times}}, \underbrace{1,...,1}_{\mu_d\text{-times}}).
\]
Then, the set 
\[
\Big \{ q \in \widehat {\mathbb S} \ \Big |\ q \ \text{is collinear along some line in } \R^{\mu_1}\times \{0\}\subset \R^D\Big\}
\]
is an attractor for the gradient flow of $\widehat U$. More precisely, the {\sc collinearity function} 
$$\theta (q) := \min_{L} \max_{i\neq j} \sphericalangle(q_i-q_j, L),$$
where $L$ runs all over the lines through the origin in $\R^{\mu_1}\times (0)$, is a Lyapounov function for the gradient vector field of $\widehat U$ on the set 
$\{q \ |\ \theta (q) \in (0,45^\circ]\}$.\qed
\end{thm}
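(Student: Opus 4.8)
The plan is to mimic the proof of Theorem~\ref{thm_45-theorem} step by step, the two new features being that the ``target'' eigenspace $W_1:=\R^{\mu_1}\times\{0\}$ of the largest eigenvalue $s_1$ may now be higher–dimensional, and that $S$ may have intermediate eigenspaces. Write $W_k$ for the $\mu_k$–dimensional eigenspace of $S$ relative to $s_k$, so that $\R^D=W_1\oplus\dots\oplus W_d$ and the symmetric matrix $I_D-S^{-1}=\bigoplus_{k=1}^d(1-s_k^{-1})\,I_{\mu_k}$ vanishes identically on $W_d$ (as $s_d=1$) and is strictly positive on $W_1\oplus\dots\oplus W_{d-1}$. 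The weighted gradient flow of $\widehat U$ is $\dot q=S_M^{-1}\nabla U(q)+U(q)q$. Fix an orbit with $\theta(q(t))\in(0,45^\circ]$, a time $t_0$, and a pair $(i,j)$ together with a line $L\subset W_1$ realizing $\theta(q(t_0))$. Since Equation~\eqref{eq:s-balanced-conf}, hence also its weighted gradient flow, is invariant under the diagonal action of $\SO(\mu_1)\times\dots\times\SO(\mu_d)$, we may rotate inside $W_1$ so that $L=\langle e_1\rangle$ ($e_1$ the first basis vector of $W_1$) and, for each $k\ge 2$, rotate inside $W_k$ so that $\pi_{W_k}(q_i-q_j)$ lies along a single basis vector of $W_k$. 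Since $(i,j)$ and $L$ are one admissible choice in the minimum defining $\theta$, one has $\theta(q(t))\le\alpha(t):=\sphericalangle(q_i(t)-q_j(t),e_1)$ for all $t$ with equality at $t_0$; hence, exactly as in the proof of Theorem~\ref{thm_45-theorem}, it suffices to show $\dot\alpha(t_0)<0$ and then pass to the one–sided derivative of the minimum.

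Set $v:=q_i-q_j$, $r_{ij}=|v|$, and write $v=v^{(1)}+v'$ with $v^{(1)}=\pi_{W_1}(v)=\langle v,e_1\rangle e_1$ and $v'\in W_1^\perp=W_2\oplus\dots\oplus W_d$, so that $|v'|=r_{ij}\sin\alpha$. Differentiating $\cos\alpha=\langle v,e_1\rangle/r_{ij}$ as in Equation~\eqref{eq:**} gives $-\dot\alpha\sin\alpha=\langle\dot v/r_{ij},e_1^\perp\rangle$ with $e_1^\perp=\sin^2\alpha\,e_1-\tfrac{\cos\alpha}{r_{ij}}v'$ the component of $e_1$ orthogonal to $v$. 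Inserting $\dot v=w-(I_D-S^{-1})w+U(q)v$, where $w:=m_i^{-1}\nabla_iU(q)-m_j^{-1}\nabla_jU(q)$, and discarding the summand $U(q)v\parallel v$, one is left with
\begin{equation*}
-\dot\alpha\sin\alpha=\frac{1}{r_{ij}}\big\langle w,e_1^\perp\big\rangle-\frac{1}{r_{ij}}\big\langle(I_D-S^{-1})w,e_1^\perp\big\rangle .
\end{equation*}
By arguing precisely as in \cite[P.~62--63]{Moe94}, i.e.\ by applying the central–configuration $45^\circ$–estimate with $e_1$ as reference direction, the first summand is $\ge 0$, and strictly positive unless $q$ is collinear along some line.

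The crux is to control the correction term. Using that $I_D-S^{-1}$ is self–adjoint and block diagonal together with the expression for $e_1^\perp$ and $\pi_{W_k}(v')=\pi_{W_k}(v)$, one expands
\begin{equation*}
-\frac{1}{r_{ij}}\big\langle(I_D-S^{-1})w,e_1^\perp\big\rangle=-\frac{(1-s_1^{-1})\sin^2\alpha}{r_{ij}}\,\langle w,e_1\rangle+\frac{\cos\alpha}{r_{ij}^2}\sum_{k=2}^{d-1}(1-s_k^{-1})\,\langle w,\pi_{W_k}(v)\rangle .
\end{equation*}
The first piece is handled exactly as the term in Equation~\eqref{eq:ff2} in the proof of Theorem~\ref{thm_45-theorem}: rewriting $\langle w,e_1\rangle$ through the $e_1$–components of the gravitational forces $F_{\ell m}$ exerted between the bodies, one checks it carries the sign that keeps $-\dot\alpha\sin\alpha>0$. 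For the intermediate blocks $k=2,\dots,d-1$, which are absent in the case $S=\diag(s,1,1)$ of Theorem~\ref{thm_45-theorem}, one rewrites each $\langle w,\pi_{W_k}(v)\rangle$ analogously in terms of forces and estimates it using $|\pi_{W_k}(v)|\le|v'|=r_{ij}\sin\alpha$ and the $45^\circ$ bound $\sin\alpha\le\cos\alpha$, so that these contributions are absorbed by the strictly positive Moeckel term. This yields $\dot\alpha(t_0)<0$; as $t_0$ is arbitrary, $\theta\circ q$ is strictly decreasing, hence $\theta$ is a Lyapunov function on $\{\theta\in(0,45^\circ]\}$, and a standard compactness/continuity argument (as for Corollary~\ref{thm:collinear-attractor}) upgrades this to the assertion that the configurations collinear along some line of $\R^{\mu_1}\times\{0\}$ form an attractor.

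The main obstacle is exactly that last step: showing that the intermediate–eigenspace contributions never outweigh the Moeckel term. This is the only genuinely new point compared with Theorem~\ref{thm_45-theorem}, and, as there, it requires a careful force–by–force bookkeeping and essential use of the $45^\circ$ threshold (the inequality $\sin^2\alpha\le\cos^2\alpha$); everything else is a routine adaptation of the $d=3$ computation, with $\SO(\mu_1)\times\dots\times\SO(\mu_d)$ replacing the single $\SO(2)$ used in the proof of Theorem~\ref{thm_45-theorem}.
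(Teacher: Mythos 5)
Your overall strategy is the one the paper intends (the paper omits this proof, stating it follows from that of Theorem~\ref{thm_45-theorem} with minor modifications), and your setup is correct: the reduction to a minimizing pair $(i,j)$ and minimizing line $L\subset W_1$, the identity $-\dot\alpha\sin\alpha=\tfrac{1}{r_{ij}}\langle S^{-1}w,e_1^\perp\rangle$ with $w=m_i^{-1}\nabla_iU(q)-m_j^{-1}\nabla_jU(q)$ (the radial term $U(q)v$ dropping out), and the block decomposition of the correction term are all right. The problem is that the single genuinely new inequality --- that the contributions of the intermediate eigenspaces $W_2,\dots,W_{d-1}$ do not destroy the sign --- is asserted rather than proved: you claim these terms are ``absorbed by the strictly positive Moeckel term'' via $|\pi_{W_k}(v)|\le r_{ij}\sin\alpha$ and $\sin\alpha\le\cos\alpha$, and then your closing paragraph concedes that precisely this absorption is ``the main obstacle'', requiring a force-by-force bookkeeping you never carry out. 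Since this is exactly the step that distinguishes the general theorem from Theorem~\ref{thm_45-theorem}, the proof is incomplete at its decisive point.

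Moreover, absorption into the Moeckel term cannot work as you describe it. That term, $\tfrac{1}{r_{ij}}\langle w,e_1^\perp\rangle$, is only nonnegative and vanishes whenever the configuration is collinear along \emph{some} line; if the line is not contained in $\R^{\mu_1}\times\{0\}$ but lies within $45^\circ$ of it, then $\theta(q)\in(0,45^\circ]$, the Moeckel term is zero, while the intermediate-block terms $\tfrac{\cos\alpha}{r_{ij}^2}(1-s_k^{-1})\langle w,\pi_{W_k}(v)\rangle$ need not vanish (there $w$ is parallel to the common direction, so $\langle w,\pi_{W_k}(v)\rangle\neq 0$ in general), so there is nothing available to absorb them; and even away from such configurations you have no quantitative lower bound on the Moeckel term to compare against. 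What is actually needed is the inequality $\langle w,S^{-1}e_1^\perp\rangle>0$, i.e.\ a re-run of the term-by-term cone argument of \cite[P. 62-63]{Moe94} (the analogue of Equations~\eqref{eq:***} and \eqref{eq:ff2} in the proof of Theorem~\ref{thm_45-theorem}) for the reweighted normal direction $S^{-1}e_1^\perp=s_1^{-1}\sin^2\alpha\,e_1-\tfrac{\cos\alpha}{r_{ij}}\sum_{k\ge 2}s_k^{-1}\pi_{W_k}(v)$, exploiting that $s_1^{-1}$ is the smallest weight together with the $45^\circ$ threshold; the crude bounds you invoke do not by themselves yield the claimed domination, so the Lyapunov property --- and with it the attractor statement --- is not established by your argument.
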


\begin{rmk}
The singular set of the action of the symmetry group of Equation~\ref{eq:s-balanced-conf} does not consist of 
only collinear configurations as soon as $d\ge  4$. This implies that the quotient of $\widehat{\mathbb S}$ by the group action will not be a manifold even if the $45^\circ$-theorem is available in any dimension. Indeed, the whole singular set cannot be removed, in general. This is already the case for planar  $\cc$, which are in general not an attractor for the gradient flow.
 Besides that, the computations of the homology of the (singular) quotient are unaccessible for large values of $d$, and 
even if in the particular case considered in this section and in Section~\ref{sec:homology}, we succeed to obtain a quotient manifold and to compute (though with some effort)
its homology, in the count of $\sbc$ we still have to keep in mind that  planar $\sbc$ which are contained in the $\widehat{yOz}$-plane are actually $\cc$, whereas the non-collinear $\sbc$ provided by Theorems~\ref{thm:main1} and~\ref{thm:main2} cannot be $\cc$.  

All these facts put on evidence, once again, the importance of the reduction to {\bf{(H1)}} argument. 
\qed
\end{rmk}

%%%%%%%%%%%%%%%%%%%%%%%%%%%%%%%%
%%
%%
%%
%%
%%%%%%%%%%%%%%%%%%%%%%%%%%%%%%%%

\section{A lower bound on the number of $\sbc$ in $\R^4$ \'a la McCord}\label{sec:homology}

In this last section we compute the homology of the manifold $\overline{\mathcal S}$ defined in~\eqref{quotientmanifold} and, by means of this information, give lower bounds 
on the number of $\sbc$ in $\R^4$ with matrix $\text{diag}\, (s,s,1,1)$ that will be then compared with the ones obtained in Theorem~\ref{thm:main1}.

\begin{thm}
\label{thm:main3}
For masses $m_1,...,m_n>0$ consider the $S$-balanced configurations problem~\eqref{eq:s-balanced-conf} on $\R^3$ with $S=\, \text{diag}\, (s,1,1)$, $s>1$. Under Assumption (H2), there exist at least 
\begin{equation}
\label{eq:lowerboundmccord}
n! \left (\sum_{j=2}^{n-1} \frac 1j + \frac 2n\right )
\end{equation}
$SO(2)$-families\footnote{As the problem is $SO(2)$-invariant, $\sbc$ always come in $SO(2)$-families.} of $\sbc$. As a corollary, for the corresponding $S$-balanced configurations problem~\eqref{eq:s-balanced-conf} on $\R^4$ with $S=\, \text{diag}\, (s,s,1,1)$ there are at least the same number as above of
$SO(2)\times SO(2)$-families of $\sbc$. 
\end{thm}

\begin{rem}\label{rmk:comparison}
The lower bound provided by Theorem~\ref{thm:main3} is much bigger than the ones in Theorems~\ref{thm:main1} and~\ref{thm:main2}, as it asymptotically grows like $n! \log n$. Moreover, unlike Theorems~\ref{thm:main1} and~\ref{thm:main2}, such a lower bound is independent of any spectral gap condition for $s$. The drawback is that we cannot a priori exclude that all $\sbc$ we find are 
in fact $\cc$ contained in $\{0\}\times \R^2$. However, Theorem~\ref{thm:main3} would be interesting even if this were the case, as the estimate provided by~\eqref{eq:lowerboundmccord} is larger than McCord's 
estimate \cite{McCord96} by a factor 2. \qed 
\end{rem}

The proof of Theorem~\ref{thm:main3} will take up the rest of the section. The two major ingredients are:
 \begin{enumerate}
	\item Corollary~\ref{thm:collinear-attractor}: The manifold $\mathbb Y$ is an attractor for the gradient flow of $\widehat U$, and the neighborhood $\mathscr V$ of $\mathbb Y$ 
	does not contain $\sbc$ other than the $s_1$-collinear $\sbc$. 
	\item Shub's lemma for $\sbc$\footnote{The proof of the original Shub's lemma carries over word by word to $\sbc$.}: $\sbc$ cannot accumulate on the singular set $\Delta$. 
	In particular, there exists an invariant neighborhood of $\Delta$ containing no $\sbc$.
\end{enumerate}

We set  $ \widehat{\mathbb Y} \= \mathbb Y \setminus \Delta$ and assume hereafter that $n \ge 4$, as for $n=3$ some arguments need some slight modification.  	
We start by collecting some well-known facts. The first two are direct consequence of the fact that both $\mathbb S$ and $\mathbb Y$ are homological spheres:
$$H_*(\mathbb S)=\left \{\begin{array}{r} \R  \quad \textrm{if}\  *=0, \ 3n-4,\\ 0  \qquad \qquad \! \textrm{otherwise},
\end{array}\right . \qquad   H_*(\mathbb Y)=\left \{\begin{array}{r} \R\quad  \text{if}\   *=0, \ n-2, \\ 0 \ \quad \qquad \textrm{otherwise}. \end{array}\right .$$
It is also readily seen that $\Delta$ is a codimensional-3-submanifold of $\mathbb S$. In particular, the set
\[
\mathbb S\setminus \big(\Delta \cup \mathbb Y\big)
\]
is simply connected. The set $\widehat{\mathbb Y}$ consists of $n!$ connected components (corresponding to the orderings of $n$ distinct points on the line), each of 
which is topologically a $(n-2)$-dimensional disk. In particular 
$$H_*(\widehat{\mathbb Y})=\left \{\begin{array}{r}
\R^{n!} \qquad \text{if}\ *=0, \\
0 \qquad \textrm{otherwise}.	
\end{array}\right .
$$
By the universal coefficients theorem and Lemma~\ref{thm_:poincare-polynomial} we also have
\[
\widetilde H_*(\widehat{\mathbb S}) \cong \widetilde H^*(\widehat{\mathbb S}) =\left \{
\begin{array}{r}
	\R^{c_j} \quad \text{if}\  *=2j,  \ \  j=1, \ldots, n-1,\\
	0 \qquad \qquad \qquad \qquad \ \ \ \ \textrm{otherwise},
\end{array}\right.
\]
where $c_j$ is the $j$-th coefficient of the polynomial $p(t)=(1+t)\cdot \ldots \cdot \big(1+(n-1)t\big)$ and $\widetilde H_*$ (resp. $ \widetilde H^*$) denotes the reduced homology (resp. cohomology). Therefore, by Alexander duality,
\[
\widetilde H_*(\Delta)\cong \widetilde H^{3n-5-*}(\widehat{\mathbb S})= \left \{ \begin{array}{r}
 \R^{c_j} \quad \text{if}\ *= 3n-5-2j, \ \   1 \le j \le n-1,	\\
 0 \qquad \qquad \qquad  \qquad \qquad \qquad \ \, \textrm{otherwise}.
 \end{array}\right .
\]
In particular, $ H_*(\Delta)$ vanishes in all degrees smaller than $n-4$ (except $*=0$), and 
\begin{align*}
H_{n-3}(\Delta)&\cong \R^{c_{n-1}}= \R^{(n-1)!},\\
 H_{n-1}(\Delta)&\cong \R^{c_{n-2}}= \R^{(n-1)!\sum_{j=1}^{n-1}\frac{1}{j}},\\
 & ...\\ 
H_{3n-7}(\Delta)&\cong \R^{c_{1}}= \R^{\frac{n(n+1)}{2}}.
\end{align*}

%%%%%%%%%%%%%%%%%%%%%%%%%%%%%%%%
%%
%%
%%
%%
%%%%%%%%%%%%%%%%%%%%%%%%%%%%%%%%

\subsection{Homology of some intermediate manifolds}\label{subsec:coll-mnfd-delta}

The computation of the homology of $\overline{\mathcal S}= \big (\mathbb S\setminus (\Delta \cup \mathbb Y)\big ) /\SO(2)$ is divided in the following intermediate steps:  
\\

{\bf (Step 1)} We compute the homology of $\mathbb Y \cup \Delta$.
\\

{\bf (Step 2)} We compute the reduced homology of $\mathbb S\setminus (\mathbb Y \cup \Delta )$ using Alexander duality.
\\

{\bf (Step 3)} We conclude using the Gysin long exact sequence of the fibration 
	$$ 
\xymatrix@R=20pt@C-6pt{
&  S^1 \ar@{^{(}->}[r] & \mathbb S \setminus (\mathbb Y \cup \Delta)\ar[d] & \\ & & \overline{\mathcal S}& 
}
$$

In this subsection we will discuss Steps 1 and 2, leaving Step 3 for the next subsection. 
Thus, we start computing the homology of $\mathbb Y \cup \Delta$ by using the Mayer-Vietoris sequence and the homological properties of $\mathbb Y$, $\mathbb Y \cap \Delta$, and $\Delta$. Since  
$$\mathbb Y=(\mathbb Y \setminus \Delta )\cup (\mathbb Y \cap \Delta)= \widehat{\mathbb Y}\cup (\mathbb Y \cap \Delta),$$ 
denoting by $\widehat \Delta$ a fattened open neighborhood of $\Delta$ in $\mathbb Y$, we get that 
\[
\mathbb Y= \widehat{\mathbb Y} \cup(\mathbb Y \cap \widehat \Delta)
\]
Each connected component of the intersection $\widehat{\mathbb Y} \cap (\mathbb Y \cap \widehat \Delta)$ is homotopy equivalent to $S^{n-3}$ (recall that the connected components of $\widehat{\mathbb Y}$ are homotopically $(n-2)$-dimensional disks). Since we have $n!$ connected components in total, we obtain 
\[
H_*\big(\widehat{\mathbb Y} \cap (\mathbb Y \cap \widehat \Delta\big)\cong 
\left \{\begin{array}{r}
	\R^{n!} \quad \text{if}\ *=0,n-3,\\
	0 \qquad \quad \ \ \textrm{otherwise}.
\end{array}\right.
\]
To compute the homology of $\mathbb Y \cap \Delta$, we start observing that $\Delta$ is an (ANR) in $\mathbb Y$ and hence
\[
H_*(\mathbb Y \cap \Delta)\cong H_*(\mathbb Y \cap \widehat \Delta).
\]
By using Mayer-Vietoris we obtain the following exact sequence
\[
\xymatrix@R=10pt@C-6pt{
  \cdots \ar[r] &
  {H}_{k+1}(\mathbb Y) \ar[r] 
%  \ar@{=}[d] 
&
  {H}_k(\widehat{\mathbb Y}\cap(\mathbb Y \cap \widehat \Delta))  \ar[r]&
  {H}_k(\widehat{\mathbb Y})\oplus {H}_k(\mathbb Y \cap \widehat \Delta)\ar[r]
%  \ar@{=}[d] 
&
  {H}_{k}(\mathbb Y) \ar[r] &
  \cdots 
%  \ar[r] &
%  {H}_0(X/A) \ar[r] &
%  0 \\
%  & B & & C
}
\]
In particular, using all available information for $k=0$ we obtain
$$\xymatrix@R=10pt@C-6pt{
  \cdots \ar[r] &  0 \ar[r]& \R^{n!}\ar[r]& \R^{n!}\oplus {H}_0(\mathbb Y \cap \widehat \Delta)\ar[r]& \R\ar[r]&0
}$$
which yields $H_0(\mathbb Y \cap \widehat \Delta) \cong \R$, whereas the long exact sequence for $k=1,...,n-3$ implies
$$H_j(\mathbb Y \cap \widehat \Delta)\cong 0,\quad \forall \ j=1,...,n-4.$$ 
Finally, the long exact sequence for $k=n-2$ reads
$$\xymatrix@R=10pt@C-6pt{
  \cdots \ar[r]& 0\ar[r]& 0\oplus {H}_{n-2}(\mathbb Y \cap \widehat \Delta)\ar[r]& \R \ar[r]&  \R^{n!} \ar[r]& 0 \oplus {H}_{n-3}(\mathbb Y \cap \widehat \Delta)\ar[r]& 0\ar[r]& \cdots
}$$
Since $\mathbb Y\cap \widehat \Delta$ is homotopy equivalent to $\mathbb Y \cap \Delta$, which is a codimension 1 subset of $\mathbb Y$, 
we have in particular that $H_{n-2}(\mathbb Y \cap \widehat \Delta)=0$. Hence, the sequence above becomes 
\[
\xymatrix@R=10pt@C-6pt{
  0 \ar[r] &\R \ar[r] & \R^{n!} \ar[r] &0 \oplus H_{n-3}(\mathbb Y \cap \widehat \Delta) \ar[r]& 0
  }
  \]
  By exactness we conclude  that 
  \[
  H_{n-3}(\mathbb Y \cap \widehat \Delta) \cong \R^{n!-1}.
  \]
Summing up, we have proved that  
\[
H_*(\mathbb Y\cap \Delta)\cong H_*(\mathbb Y \cap \widehat \Delta)= \left \{ \begin{array}{r} \R  \qquad \qquad \ \,  \text{if}\ *=0, \\
	\R^{n!-1} \quad \text{if}\ *=n-3,\\
	0 \qquad \ \ \ \ \, \textrm{ otherwise}. \end{array} \right .
\]

We are now ready to compute the homology of $\mathbb Y \cup \Delta$. To do this we use the Mayer-Vietoris sequence associated with $\mathbb Y, \mathbb Y \cap \Delta$ and $\Delta$:
\[
\xymatrix@R=10pt@C-6pt{
\cdots \ar[r] & H_k(\mathbb Y \cap \Delta) \ar[r] &H_k(\mathbb Y) \oplus H_k(\Delta) \ar[r] & H_k(\mathbb Y \cup \Delta) \ar[r]&H_{k-1}(\mathbb Y \cap \Delta) \ar[r] &\cdots 
}
\]
We immediately see that  the long exact sequence above implies 
$$H_*(\mathbb Y\cup \Delta)= \{0\},\quad \text{for} \ *=1,...,n-4,$$
whereas for $*=n-1,...,3n-7$ we have
\[
H_*(\mathbb Y \cup \Delta)\cong H_*(\Delta).
\]
For $k=n-2$ we get 
%\begin{itemize}
%\item \[
%\xymatrix@R=10pt@C-6pt{
%\ar[r] &H_{n-2}(\mathbb Y\cap \Delta) \ar[r] \ar@{=}[d]&H_{n-2}(\mathbb Y)\oplus H_{n-2}(\Delta) \ar@{=}[d]\ar[r]& H_{n-2}(\mathbb Y \cup \Delta) \ar[r]& H_{n-3}(\mathbb Y \cap \Delta)\ar[r] \ar@{=}[d]&\\
%\ar[r]&0\ar[r] & \R\ar[r] \oplus 0& H_{n-2}(\mathbb Y \cup \Delta) \ar[r] & \R^{n!-1} \ar[r]&\\
%\ar[r]&  H_{n-3}(\mathbb Y)\oplus H_{n-3}(\Delta)\ar[r]\ar@{=}[d]&H_{n-3}(\mathbb Y \cup \Delta)\ar[r]&H_{n-4}(\mathbb Y \cap \Delta)\ar[r]\ar@{=}[d]&\cdots\\
%\ar[r]& 0 \oplus \R^{(n-1)!} \ar[r]& H_{n-3}(\mathbb Y \cup \Delta)\ar[r]& 0 \ar[r]& \cdots
%}
%\]
%In conclusion, we get 
\[
\xymatrix@R=50pt@C+8pt{
0\  \ar[r] &\R \ar[r] &  H_{n-2}(\mathbb Y \cup \Delta) \ar[r] & \R^{n!-1} \ar[r]^g & \R^{(n-1)!}\ar[r]^h& H_{n-3}(\mathbb Y \cup \Delta) \ar[r]& 0
}
\]
Since $\mathbb Y \cup \Delta$ is homotopy equivalent to the union of $\Delta$  together with $n!$ disks of dimension $(n-2)$, we get 
\[
H_{n-3}(\mathbb Y \cup \Delta)\cong H_{n-3}( \Delta)\cong \R^{(n-1)!}
\]
and hence in particular $h$ is an isomorphism. By exactness, this implies that $\mathrm{Im}\, g=\{0\}$. Therefore, the exact sequence 
can be rewritten as 
 \[
\xymatrix@R=50pt@C+8pt{
0\  \ar[r] &\R \ar[r] &  H_{n-2}(\mathbb Y \cup \Delta) \ar[r] & \R^{n!-1} \ar[r] & 0
}
\]
and this readily implies that 
 \[
 H_{n-2}(\mathbb Y \cup \Delta)\cong \R^{n!}.
 \]
Summarizing, we have proved that 
\[
H_*(\mathbb Y \cup \Delta)\cong \left \{ \begin{array}{r}
 \R \qquad \qquad \qquad \qquad \qquad \qquad \quad \ \ \text{if}\ * =0, \\
 \R^{n!} \qquad \qquad \qquad \qquad \ \ \ \, \, \qquad \text{if}\  *=n-2,\\
 \R^{c_j} \quad \text{if}\  *= 3n-5-2j, \ j=1, \ldots, n-1,\\
 0 \qquad \qquad \qquad \qquad \qquad \qquad \ \  \textrm{ otherwise}.	
 \end{array}\right.
\]
Using Alexander duality and the universal coefficients theorem, we get 
\[
\widetilde H_{*}\big(\mathbb S\setminus (\mathbb Y \cup \Delta)\big)\cong \widetilde H^{3n-5-*}\big(\mathbb S \setminus (\mathbb Y \cup \Delta)\big)\cong \widetilde H_*(\mathbb Y \cup \Delta)
\]
and hence finally
\[
H_*\big(\mathbb S \setminus \mathbb Y \cup \Delta)\big)\cong\left \{ \begin{array}{r}
 \R \qquad \qquad \qquad \qquad \quad \ \ \text{if}\ *=0,\\
 \R^{n!} \qquad \qquad \qquad \ \ \, \text{if}\ *=2n-3, \\
 \R^{c_j} \quad \text{if}\ *= 2j, \ j=1, \ldots, n-1,\\
 0 \qquad \qquad \qquad \qquad \ \, \, \textrm{ otherwise}.	
 \end{array}\right .
\]

%%%%%%%%%%%%%%%%%%%%%%%%%%%%%%%%
%%
%%
%%
%%
%%%%%%%%%%%%%%%%%%%%%%%%%%%%%%%%

\subsection{The homology of $\overline{\mathcal S}$}
Let us now consider the fibration 
\[
\xymatrix@R=20pt@C-6pt{
& \mathbb S^1 \ar@{^{(}->}[r] & \mathbb S \setminus (\mathbb Y \cup \Delta)\ar[d] & \\ & & \overline{\mathcal S}& 
}
\]
Since $\mathbb S \setminus (\mathbb Y \cup \Delta)\big)$ is simply connected, the exact sequence of homotopy groups
\[
\xymatrix@R=10pt@C-6pt{
\cdots \ar[r]&\ar[r] \pi_1\big(\mathbb S \setminus (\mathbb Y\cup \Delta)\big) \ar[r] & \pi_1(\overline{\mathcal S}) \ar[r] & \pi_0(\mathbb S^1)
}
\]
yields that $\pi_1(\overline{\mathcal S})=\{0\}$. Therefore, the fibration is homologically orientable. Using the information provided in Subsection~\ref{subsec:coll-mnfd-delta}, 
we can rewrite the Gysin long exact sequence of the fibration
\[
\xymatrix@R=20pt@C-6pt{
\cdots \ar[r] & H_{k+1}\big(\mathbb S \setminus (\mathbb Y \cup \Delta\big) \ar[r] & H_{k+1}(\overline{\mathcal S}) \ar[r] & H_{k-1}(\overline{\mathcal S}) \ar[r] &  H_{k}\big(\mathbb S \setminus (\mathbb Y \cup \Delta\big) \ar[r] &\cdots
}
\]
as follows:
\[
\xymatrix@R=10pt@C-12pt{
 ... \ar[r] & H_{2n-3}(\overline{\mathcal S}) \ar[r] &  {\underbrace{H_{2n-2}\big(\mathbb S \setminus (\mathbb Y \cup \Delta\big)}_{=0}} \ar[r] &H_{2n-2}(\overline{\mathcal S}) \ar[r] & \\
 & H_{2n-4}(\overline{\mathcal S}) \ar[r]& {\underbrace{H_{2n-3}\big(\mathbb S \setminus (\mathbb Y \cup \Delta\big)}_{=\R^{n!}}} \ar[r] &H_{2n-3}(\overline{\mathcal S})\ar[r] & H_{2n-5}(\overline{\mathcal S}) \ar[r] &\\
%  \ar[r] &H_{2n-3}(\overline{\mathcal S})\ar[r] & H_{2n-5}(\overline{\mathcal S}) \ar[r]& H_{2n-4}\big(\mathbb S \setminus (\mathbb Y \cup \Delta\big)\ar@{=}[d]\\
% &&&\R^{c_{n-2}}\\
  &  {\underbrace{H_{2n-4}\big(\mathbb S \setminus (\mathbb Y \cup \Delta\big)}_{=\R^{c_{n-2}}}} \ar[r] &H_{2n-4}(\overline{\mathcal S})\ar[r] & H_{2n-6}(\overline{\mathcal S}) \ar[r]& {\underbrace{H_{2n-5}\big(\mathbb S \setminus (\mathbb Y \cup \Delta\big)}_{=0}}\ar[r] & &\\
  & & \ldots & \\
  &H_{2}(\overline{\mathcal S})\ar[r]  &H_{0}(\overline{\mathcal S}) \ar[r]& {\underbrace{H_{1}\big(\mathbb S \setminus (\mathbb Y \cup \Delta\big)}_{=0}} \ar[r]& {\underbrace{H_{1}(\overline{\mathcal S})}_{=0}} \ar[r] & \\
 & 0 \ar[r]& {\underbrace{H_{0}\big(\mathbb S \setminus (\mathbb Y \cup \Delta\big)}_{=0}} \ar[r]&H_0(\overline{\mathcal S}) \ar[r] &0
%%   H_{0}\big(\mathbb S \setminus (\mathbb Y \cup \Delta\big)\ar@{=}[d]\\
%% &0&&\R\\
% \ar[r] &0&& 
}
\]
Therefore, $H_0(\overline{\mathcal S})\cong \R$ and  $H_1(\overline{\mathcal S})\cong \{0\}$. Reading the long exact sequence backwards we obtain 
\[
\xymatrix@R=8pt@C-6pt{
0= H_1(\overline{\mathcal S}) \ar[r] & {\underbrace{H_2\big(\mathbb S \setminus (\mathbb Y \cup \Delta\big)}_{=\R^{c_1}}} \ar[r] & H_2(\overline{\mathcal S})\ar[r]&\R \ar[r]& 0
}
\]
which implies $H_2(\overline{\mathcal S})\cong \R \oplus \R^{c_1}$. Similarly,
\[
\xymatrix@R=8pt@C-6pt{
0= H_3(\mathbb S \setminus (\mathbb Y \cup \Delta)) \ar[r]& H_3(\overline{\mathcal S}) \ar[r] & {\underbrace{H_1(\overline{\mathcal S})}_{=0}} \ar[r]& ...}
\]
implies $H_3(\overline{\mathcal S})=0$. Computing further yields
\[
\xymatrix@R=8pt@C-6pt{
0= H_3(\overline{\mathcal S}) \ar[r] &  H_4(\mathbb S \setminus (\mathbb Y \cup \Delta))\ar[r] &H_4(\overline{\mathcal S})\ar[r]&H_2(\overline{\mathcal S}) \ar[r] & 0
}
\]
implying that 
\[
H_4(\overline{\mathcal S})\cong H_2(\overline{\mathcal S})\oplus 
H_4(\mathbb S \setminus (\mathbb Y \cup \Delta))\cong \R \oplus \R^{c_1} \oplus \R^{c_2},
\]
and arguing recursively we obtain for all but the top degree terms the following
\begin{lem}\label{thm:homology-quotient}
For every $k=0, \ldots, n-3$ we have:
\begin{align*}
\hspace{36mm} H_{2k+1}(\overline{\mathcal S})& =\{0\}, \hspace{36mm} \\
\hspace{36mm}  H_{2k}(\overline{\mathcal S})& =H_0(\overline{\mathcal S})\oplus \bigoplus_{j=1}^k H_{2j}(\mathbb S \setminus (\mathbb Y \cup \Delta))  = \R^{\sum_{j=0}^k c_j}. \hspace{36mm}
\qed
 \end{align*}
\end{lem}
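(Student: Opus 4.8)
The plan is to read off $H_*(\overline{\mathcal S})$ by induction on $k$, feeding the homology of $\mathbb S\setminus(\mathbb Y\cup\Delta)$ computed in Subsection~\ref{subsec:coll-mnfd-delta} into the Gysin long exact sequence of the circle bundle
\[
S^1\hookrightarrow \mathbb S\setminus(\mathbb Y\cup\Delta)\longrightarrow \overline{\mathcal S},
\]
which is homologically orientable since, as established above, $\overline{\mathcal S}$ is simply connected. Abbreviate $E:=\mathbb S\setminus(\mathbb Y\cup\Delta)$; recall that $H_0(E)\cong\R$, that $H_{2j}(E)\cong\R^{c_j}$ for $j=1,\ldots,n-1$, that $H_{2n-3}(E)\cong\R^{n!}$, and that all other homology groups of $E$ vanish. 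The decisive observation is that $H_m(E)=0$ for every \emph{odd} $m<2n-3$: this makes the Gysin sequence split into short exact sequences throughout the range of degrees considered in the statement, and it is exactly why the statement is restricted to the non-top degrees (the exceptional class in odd degree $2n-3$ will only affect the top-dimensional homology of $\overline{\mathcal S}$).

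First I would settle the base case $k=0$: $\overline{\mathcal S}$ is connected, hence $H_0(\overline{\mathcal S})\cong\R=\R^{c_0}$ (note $c_0=1$), while $H_1(\overline{\mathcal S})=0$ has already been deduced from $\pi_1(\overline{\mathcal S})=\{0\}$. For the inductive step, suppose $H_{2j+1}(\overline{\mathcal S})=0$ and $H_{2j}(\overline{\mathcal S})\cong\R^{\sum_{i=0}^{j}c_i}$ for all $j\le k$, with $k\le n-4$, and consider the portion of the Gysin sequence
\[
H_{2k+3}(E)\to H_{2k+3}(\overline{\mathcal S})\to H_{2k+1}(\overline{\mathcal S})\to H_{2k+2}(E)\to H_{2k+2}(\overline{\mathcal S})\to H_{2k}(\overline{\mathcal S})\to H_{2k+1}(E).
\]
Since $k\le n-4$ we have $2k+3\le 2n-5<2n-3$ and $2k+1<2n-3$, so $H_{2k+3}(E)=H_{2k+1}(E)=0$, while $H_{2k+1}(\overline{\mathcal S})=0$ by the inductive hypothesis; therefore the sequence degenerates into $0\to H_{2k+3}(\overline{\mathcal S})\to 0$, forcing $H_{2k+3}(\overline{\mathcal S})=0$, together with the short exact sequence
\[
0\longrightarrow H_{2k+2}(E)\longrightarrow H_{2k+2}(\overline{\mathcal S})\longrightarrow H_{2k}(\overline{\mathcal S})\longrightarrow 0.
\]
All three terms being finite-dimensional $\R$-vector spaces, this splits, and since $H_{2k+2}(E)\cong\R^{c_{k+1}}$ (here $1\le k+1\le n-1$) and $H_{2k}(\overline{\mathcal S})\cong\R^{\sum_{i=0}^{k}c_i}$, we obtain $H_{2k+2}(\overline{\mathcal S})\cong\R^{\sum_{i=0}^{k+1}c_i}$. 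Letting $k$ run from $0$ to $n-4$ this advances the assertion through all $k\le n-3$, which is precisely the claimed range.

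The argument is purely homological, so I do not expect a substantive obstacle; the only thing that genuinely requires attention is the degree bookkeeping. One has to check at each stage that the degrees $2k+1,2k+3$ occurring in the Gysin segment remain strictly below $2n-3$, so that the odd homology of $E$ vanishes and the connecting maps disappear — this holds precisely for $k\le n-3$, which accounts for the restriction in the statement. The homology of $\overline{\mathcal S}$ in the remaining degrees ($\ge 2n-4$) must instead be obtained by a separate, more careful inspection of the same sequence, in which the exceptional summand $\R^{n!}\cong H_{2n-3}(E)$ genuinely contributes.
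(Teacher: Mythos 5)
Your argument is correct and coincides with the paper's own proof: both feed the homology of $\mathbb S\setminus(\mathbb Y\cup\Delta)$ computed in Subsection~\ref{subsec:coll-mnfd-delta} into the Gysin sequence of the orientable circle bundle over $\overline{\mathcal S}$, use the vanishing of the odd homology of the total space below degree $2n-3$ to split the sequence into short exact (hence split) sequences of vector spaces, and proceed recursively from $H_0(\overline{\mathcal S})\cong\R$, $H_1(\overline{\mathcal S})=0$. Your explicit degree bookkeeping ($k\le n-4$ in the inductive step) just formalizes the paper's ``arguing recursively'' and is accurate.
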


For the top degree terms we first observe that 
	\[
	\xymatrix@R=8pt@C-6pt{
	0= H_{2n-5}(\overline{\mathcal S}) \ar[r] & {\underbrace{H_{2n-4}(\mathbb S \setminus (\mathbb Y \cup \Delta))}_{=\R^{c_{n-2}}}} \ar[r] &H_{2n-4}(\overline{\mathcal S}) \ar[r]& H_{2n-6}(\overline{\mathcal S}) \ar[r] & 0
	}
	\]
	implies that 
	$$H_{2n-4}(\overline{\mathcal S}) \cong \R^{\sum_{j=0}^{n-2} c_j}.$$ 
Moreover, arguing recursively we deduce from 
\[
\xymatrix@R=8pt@C-6pt{
{\underbrace{H_{2n}(\mathbb S \setminus (\mathbb Y \cup \Delta))}_{=0}} \ar[r] & H_{2n}(\overline{\mathcal S})\ar[r] &  H_{2n-2}(\overline{\mathcal S})\ar[r] & {\underbrace{H_{2n-1}(\mathbb S \setminus (\mathbb Y \cup \Delta))}_{=0}}, \\
{\underbrace{H_{2n+1}(\mathbb S \setminus (\mathbb Y \cup \Delta)}_{=0}}) \ar[r] & H_{2n+1}(\overline{\mathcal S})\ar[r] &  H_{2n-1}(\overline{\mathcal S})\ar[r] & {\underbrace{H_{2n}(\mathbb S \setminus (\mathbb Y \cup \Delta))}_{=0}}},
\]
and the fact that $\overline{\mathcal S}$ is finite dimensional that 
\[
 H_{j}(\overline{\mathcal S})\cong  0, \quad \forall j \ge 2n-2,
 \]
as all such homology groups are isomorphic. Finally, from 

	\begin{equation*}\label{eq:*finale-gysin}
	\xymatrix@R=8pt@C-6pt{
	0 \ar[r] & H_{2n-3}(\overline{\mathcal S}) \ar[r] & 
	 H_{2n-2}(\mathbb S \setminus (\mathbb Y \cup \Delta)) \ar[r] & 
	H_{2n-2}(\overline{\mathcal S})=0}
	\end{equation*}
we obtain using Part 1 in Proposition~\ref{lem:coeffofpoincarepolynomial}
\[
H_{2n-3}(\overline{\mathcal S}) \cong \R^{n! - \sum_{j=0}^{n-2} c_j} = \R^{c_{n-1}}= \R^{(n-1)!}
\]
Summarizing all the previous computations, we finally get the following result.
\begin{thm}\label{thm:homology-of-sbar}
The homology of $\overline{\mathcal S}$ is given by 
\begin{equation}\label{eq:ultima-forse}
 \hspace{38mm} H_*(\overline{\mathcal S})\cong \left \{\begin{array}{r}
	\R^{\sum_{j=0}^k c_j } \quad \text{if}\ *=2k, \ \ k=0, \ldots, n-2, \\
	\R^{(n-1)!} \qquad  \qquad \qquad \ \ \ \ \  \text{if}\  *= 2n-3,\\
	0 \qquad  \qquad \qquad \qquad \qquad \ \ \ \  \textrm{ otherwise}.
\end{array}\right. \hspace{38mm} \qed
\end{equation} 
\end{thm}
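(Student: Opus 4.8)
The plan is to assemble the homological data gathered in the preceding subsections by means of the three-step scheme already outlined. Recall that Corollary~\ref{thm:collinear-attractor} guarantees that $\overline{\mathcal S}=(\widehat{\mathbb S}\setminus\mathbb Y)/\SO(2)$ defined in~\eqref{quotientmanifold} is a genuine closed manifold of dimension $2n-3$, while Shub's lemma lets us discard a neighborhood of $\Delta$ without losing any $\sbc$. The target is therefore to compute $H_*(\mathbb S\setminus(\mathbb Y\cup\Delta))$ and then transport it through the circle bundle $\mathbb S^1\hookrightarrow \mathbb S\setminus(\mathbb Y\cup\Delta)\to\overline{\mathcal S}$.

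First I would treat Steps 1 and 2. For Step 1, I feed the Mayer--Vietoris sequence of the decomposition $\mathbb Y\cup\Delta=\mathbb Y\cup\widehat\Delta$ (a fattened $\Delta$), using: that $\mathbb Y$ and $\mathbb S$ are homology spheres of dimensions $n-2$ and $3n-4$; that the reduced homology of $\Delta$ is obtained from $\widetilde H^*(\widehat{\mathbb S})$ via Alexander duality in $\mathbb S$, with the coefficients $c_j$ of $p_n(t)=(1+t)\cdots(1+(n-1)t)$ furnished by Lemma~\ref{thm_:poincare-polynomial}; and that $\mathbb Y\cap\Delta$ is modeled by $n!$ copies of $S^{n-3}$-neighborhoods sitting inside the $n!$ disk components of $\widehat{\mathbb Y}$, which a preliminary Mayer--Vietoris shows has $H_0\cong\R$ and $H_{n-3}\cong\R^{n!-1}$. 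The subtlety is controlling the connecting homomorphisms in degrees $n-3$ and $n-2$; I would pin them down using the fact that $\mathbb Y\cup\Delta$ is homotopy equivalent to $\Delta$ with $n!$ disks of dimension $n-2$ attached, which forces $H_{n-3}(\mathbb Y\cup\Delta)\cong H_{n-3}(\Delta)$ and hence the relevant comparison map to be an isomorphism, yielding $H_{n-2}(\mathbb Y\cup\Delta)\cong\R^{n!}$. Step 2 is then one more application of Alexander duality together with universal coefficients, giving $H_*(\mathbb S\setminus(\mathbb Y\cup\Delta))\cong\R$ in degree $0$, $\R^{c_j}$ in degree $2j$ for $1\le j\le n-1$, and $\R^{n!}$ in degree $2n-3$.

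For Step 3 I would run the Gysin long exact sequence of the $\mathbb S^1$-bundle. Since $\Delta$ has codimension $3$ in $\mathbb S$, the base $\mathbb S\setminus(\mathbb Y\cup\Delta)$ is simply connected, so the homotopy exact sequence forces $\pi_1(\overline{\mathcal S})=0$ and the bundle is homologically orientable. Chasing the Gysin sequence in ascending degree then gives the vanishing of odd-degree homology below the top and the telescoping identity $H_{2k}(\overline{\mathcal S})\cong H_0(\overline{\mathcal S})\oplus\bigoplus_{j=1}^{k}H_{2j}(\mathbb S\setminus(\mathbb Y\cup\Delta))\cong\R^{\sum_{j=0}^{k}c_j}$, which is exactly Lemma~\ref{thm:homology-quotient}. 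For the top degrees I argue separately: the segment around degree $2n-4$ gives $H_{2n-4}(\overline{\mathcal S})\cong\R^{\sum_{j=0}^{n-2}c_j}$; the periodicity of the Gysin sequence above degree $2n-3$ together with finite-dimensionality of $\overline{\mathcal S}$ forces $H_j(\overline{\mathcal S})=0$ for $j\ge 2n-2$; and finally the short exact piece $0\to H_{2n-3}(\overline{\mathcal S})\to H_{2n-2}(\mathbb S\setminus(\mathbb Y\cup\Delta))\to 0$ gives $H_{2n-3}(\overline{\mathcal S})\cong\R^{\,n!-\sum_{j=0}^{n-2}c_j}\cong\R^{c_{n-1}}\cong\R^{(n-1)!}$, where the identity $\sum_{j=0}^{n-1}c_j=n!$ is Part~1 of Proposition~\ref{lem:coeffofpoincarepolynomial}. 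Collecting these yields~\eqref{eq:ultima-forse}.

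The main obstacle I anticipate is Step 1: the Mayer--Vietoris bookkeeping for $\mathbb Y\cup\Delta$ is delicate precisely in the overlap degrees $n-3$ and $n-2$, where one must argue that a connecting map vanishes (equivalently that a comparison map is an isomorphism) rather than merely match ranks; the homotopy model ``$\Delta$ with $n!$ attached $(n-2)$-disks'' is what makes this rigorous. A secondary nuisance is the case $n=3$, where $\mathbb Y\cap\Delta$ and some of the disk/sphere identifications degenerate and small ad hoc modifications are needed, which is why we assume $n\ge 4$ throughout. Everything else — the two Alexander dualities and the Gysin chase — is routine once the coefficient identities for the $c_j$ from Section~\ref{sec:poincare} are in hand.
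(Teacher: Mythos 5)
Your proposal reproduces the paper's proof essentially verbatim: the same two Mayer--Vietoris computations (first for $\mathbb Y\cap\Delta$ via the fattened $\widehat\Delta$, then for $\mathbb Y\cup\Delta$, with the ``$\Delta$ plus $n!$ attached $(n-2)$-disks'' model pinning down the connecting maps in degrees $n-3$ and $n-2$), the same Alexander duality plus universal coefficients step for $H_*(\mathbb S\setminus(\mathbb Y\cup\Delta))$, and the same Gysin chase (simple connectivity of the total space giving $\pi_1(\overline{\mathcal S})=0$ and homological orientability, the telescoping identity of Lemma~\ref{thm:homology-quotient}, periodicity plus finite-dimensionality killing $H_j(\overline{\mathcal S})$ for $j\ge 2n-2$, and the final short exact piece giving $H_{2n-3}(\overline{\mathcal S})\cong\R^{(n-1)!}$). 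Two harmless slips that do not affect the argument: $\overline{\mathcal S}$ is a noncompact manifold of dimension $3n-5$, not a closed one of dimension $2n-3$, and $\mathbb S\setminus(\mathbb Y\cup\Delta)$ is the total space, not the base, of the circle bundle.
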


A new lower bound on the number of critical points of $\widehat U$ assuming non-degeneracy is given now by the sum of the Betti numbers of $\overline{\mathcal S}$:
\begin{equation}
(n-1)! + \sum_{k=0}^{n-2} \sum_{j=0}^k c_j = (n-1)! + \sum_{j=0}^{n-2} c_j (n-1-j). 
\label{lb1}
\end{equation}
The next lemma, which appears in \cite{McCord96} without proof, will be useful to find a closed form for~\eqref{lb1}.

\begin{lem}
\label{lb3}
Denote by $\xi_j^{(n)}$ the $j$-th coefficient of the polynomial 
$$(1+2t) \cdot ... \cdot (1+(n-1)t).$$
Then, 
\begin{align*}
\sum_{j=0}^{n-2} \xi_j^{(n)}  = \frac{n!}{2}, \qquad 
\sum_{j=0}^{n-3} \xi_j^{(n)} (n-2-j) = \frac{n!}{2} h(n),
\end{align*}
where $\displaystyle h(n) := \sum_{j=3}^{n} \frac 1j$. 
\end{lem}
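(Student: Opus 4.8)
The plan is to compute both sums by relating the coefficients $\xi_j^{(n)}$ to the coefficients $c_j^{(n)}$ of $p_n(t) = (1+t)(1+2t)\cdots(1+(n-1)t)$ studied in Proposition~\ref{lem:coeffofpoincarepolynomial}, and then to exploit the telescoping recursion that both families satisfy. Write $q_n(t) := (1+2t)\cdots(1+(n-1)t) = \sum_{j=0}^{n-2}\xi_j^{(n)} t^j$, so that $p_n(t) = (1+t)q_n(t)$, giving $c_j^{(n)} = \xi_j^{(n)} + \xi_{j-1}^{(n)}$. For the first identity, evaluate $q_n(t)$ at $t=1$: then $q_n(1) = \sum_{j}\xi_j^{(n)}$, and since $p_n(1) = 2q_n(1)$ while $p_n(1) = n!$ by Part 1 of Proposition~\ref{lem:coeffofpoincarepolynomial}, we conclude $\sum_j \xi_j^{(n)} = n!/2$. (Alternatively, one argues directly by induction: $q_{n+1}(t) = (1+nt)q_n(t)$ gives $\sum_j \xi_j^{(n+1)} = (1+n)\sum_j \xi_j^{(n)}$, with base case $q_2 \equiv 1$, $q_3 = 1+2t$, so $\sum = 3 = 3!/2$.)

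For the second identity, the cleanest route is induction on $n$, keeping track of the weighted sum $\Sigma_n := \sum_{j=0}^{n-3}\xi_j^{(n)}(n-2-j)$. Observe that the weight $(n-2-j)$ is precisely the "number of factors beyond the $j$-th," so there is a combinatorial reading: $\Sigma_n$ counts, with the appropriate multiplicities, the terms obtained by deleting one factor $(1+kt)$, $2 \le k \le n-1$, and taking the constant term of what remains — more precisely $\Sigma_n = \sum_{k=2}^{n-1}\big(\prod_{2\le \ell \le n-1,\ \ell\ne k}1\big)\cdot(\text{something})$; but it is simpler to just push the recursion. From $q_{n+1}(t) = (1+nt)q_n(t)$ we get $\xi_j^{(n+1)} = \xi_j^{(n)} + n\,\xi_{j-1}^{(n)}$, hence
\begin{align*}
\Sigma_{n+1} &= \sum_{j=0}^{n-2}\xi_j^{(n+1)}(n-1-j) \\
&= \sum_{j=0}^{n-2}\xi_j^{(n)}(n-1-j) + n\sum_{j=0}^{n-2}\xi_{j-1}^{(n)}(n-1-j) \\
&= \Big(\Sigma_n + \sum_{j=0}^{n-3}\xi_j^{(n)}\Big) + n\sum_{i=0}^{n-3}\xi_i^{(n)}(n-2-i) \\
&= \Sigma_n + \frac{n!}{2} + n\,\Sigma_n = (n+1)\Sigma_n + \frac{n!}{2},
\end{align*}
where in the third line I reindexed $i = j-1$ and used that $\xi_{n-2}^{(n)}(n-1-(n-2)) = \xi_{n-2}^{(n)}$ is absorbed after noting the top coefficient contributes a term that matches $\sum_{j=0}^{n-3}\xi_j^{(n)} = \sum_{j=0}^{n-2}\xi_j^{(n)} - \xi_{n-2}^{(n)}$; the bookkeeping of the single boundary term $\xi_{n-2}^{(n)}$ should be done carefully but is routine, and the first identity $\sum_j \xi_j^{(n)} = n!/2$ handles the unweighted piece. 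Dividing the recursion by $(n+1)!/2$ gives $\frac{2\Sigma_{n+1}}{(n+1)!} = \frac{2\Sigma_n}{n!} + \frac{1}{n+1}$, so $\frac{2\Sigma_n}{n!}$ increases by $\frac1{n+1}$ at each step; checking the base case ($n=3$: $q_3 = 1+2t$, the sum $\sum_{j=0}^0 \xi_j^{(3)}(1-j) = 1$, and $\frac{3!}{2}h(3) = 3\cdot\frac13 = 1$) pins down $\frac{2\Sigma_n}{n!} = \sum_{k=4}^{n+1}\frac1k = h(n+1) - \frac13$...

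at which point I would recheck the index of $h$: since $h(n) = \sum_{j=3}^n \frac1j$, starting the telescoping at the correct base gives exactly $\frac{2\Sigma_n}{n!} = h(n)$, i.e. $\Sigma_n = \frac{n!}{2}h(n)$, as claimed. The main obstacle is purely clerical: getting the off-by-one bookkeeping of the boundary coefficient $\xi_{n-2}^{(n)}$ and the base case of the induction exactly right, since the two stated identities are tightly coupled and a shifted index in $h$ would break the final formula~\eqref{eq:finale-intro}. Everything else is a short manipulation of the two-term recursion $\xi_j^{(n+1)} = \xi_j^{(n)} + n\xi_{j-1}^{(n)}$.
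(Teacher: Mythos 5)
Your proposal is correct and follows essentially the same route as the paper: the first identity by evaluating the polynomial at $t=1$, the second by induction on $n$ via the recursion $\xi_j^{(n+1)}=\xi_j^{(n)}+n\,\xi_{j-1}^{(n)}$ (the paper verifies the inductive step directly with base case $n=4$, while you telescope the equivalent scalar recursion $\Sigma_{n+1}=(n+1)\Sigma_n+\tfrac{n!}{2}$ from base case $n=3$). The only blemish is the third displayed line, where the boundary term $\xi_{n-2}^{(n)}$ is dropped, so $\sum_{j=0}^{n-3}\xi_j^{(n)}$ should be $\sum_{j=0}^{n-2}\xi_j^{(n)}=\tfrac{n!}{2}$; as you yourself flag, this is routine bookkeeping, and the resulting recursion and conclusion $\Sigma_n=\tfrac{n!}{2}h(n)$ are correct.
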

\begin{proof}
The first identity follows evaluating the polynomial at $t=1$. We now prove the second identity by induction over $n$. A straightforward computation shows that for $n=4$ both RHS and LHS are equal to 7. Suppose now that the claim be true for $n$. We want to show that 
\begin{equation}
\sum_{j=0}^{n-2} \xi_j^{(n+1)} (n-1-j) = \frac{(n+1)!}{2} h(n+1).
\label{lb2}
\end{equation}
As one readily sees, we have 
$$\xi_j^{(n+1)} = \xi_j^{(n)} + n \xi_{j-1}^{(n)}, \quad \forall j =0,...,n-1,$$
where we set $\xi_{-1}^{(n)} = \xi_{n-1}^{(n)}\=0$. Therefore, the (LHS) of Equation~\eqref{lb2} can be rewritten by using the first identity and the inductive assumption as
\begin{align*}
\sum_{j=0}^{n-2} \xi_j^{(n+1)} (n-1-j) &= \sum_{j=0}^{n-2} \big (\xi_j^{(n)} + n \xi_{j-1}^{(n)}\big ) (n-1-j)\\
								&= \sum_{j=0}^{n-2}\xi_j^{(n)} + \sum_{j=0}^{n-3}\xi_j^{(n)} (n-2-j) + n \sum_{j=0}^{n-2} \xi_{j-1}^{(n)} (n-1-j)\\
								&= \frac{n!}{2} + \frac{n!}{2} h(n) + n \sum_{j=0}^{n-3} \xi_{j}^{(n)} (n-2-j)\\
								&= \frac{n!}{2} + \frac{n!}{2} h(n) + n\frac{n!}{2} h(n)\\
								%&= \frac{n!}{2} + \frac{(n+1)!}{2} h(n)\\
								&=  \frac{(n+1)!}{2} h(n+1).
\end{align*}
This completes the proof.
\end{proof}

For notational convenience, we hereafter drop the superscript from $\xi_j^{(n)}$. By the very definition of the coefficients $c_j$, we see that 
$$c_j = \xi_j + \xi_{j-1},\quad \forall j=0,...,n-1,$$
where as above $\xi_{-1}=\xi_{n-1}:=0$. Therefore, using Lemma~\ref{lb3} we can rewrite~\eqref{lb1} as 
\begin{align}
(n-1)! + \sum_{j=0}^{n-2} c_j (n-1-j) &= (n-1)! + \sum_{j=0}^{n-2} \big (\xi_j + \xi_{j-1}\big ) (n-1-j)\nonumber \\
							&= (n-1)! + \sum_{j=0}^{n-2} \xi_j + \sum_{j=0}^{n-3} \xi_j(n-2-j) + \sum_{j=0}^{n-2} \xi_{j-1} (n-1-j)\nonumber \\
							&= (n-1)!+ \frac{n!}{2} + n! h(n)\nonumber \\
							&= n! \Big ( h(n) + \frac 12 + \frac 1n\Big ). \label{lb4}
\end{align}
This gives the desired lower bound on the number of critical points of $\widehat U$ on $\overline{\mathcal S}$ assuming non-degeneracy, thus completing the proof of Theorem~\ref{thm:main3}. 
As already observed in Remark~\ref{rmk:comparison}, such a set of critical points 
contains also $\cc$ in the $\widehat{yOz}$-plane, which are in virtue of \cite{McCord96} (taking into account also $\ccc$) at least 
$$\frac{n!}{2} \big (h(n) + 1).$$ 

%%%%%%%%%%%%%%%%%%%%%%%%%%%%%%%%
%%
%%
%%
%%
%%%%%%%%%%%%%%%%%%%%%%%%%%%%%%%%

\newpage

\appendix

\section{Proof of Lemma \ref{thm:integrale-iterato}}\label{sec:appendix}

In this section we prove Lemma \ref{thm:integrale-iterato}.

\begin{lem}
For every $n,j\in \N$ we have 
$$ \int_1^n \frac{1}{i_1}\cdot \int_{i_1}^n \frac{1}{i_2}\cdot ... \cdot \int_{i_{j-1}}^n \frac{1}{i_{j}}\ \  \diff i_{j}\cdot ... \cdot \diff i_1 = \frac{1}{j!}\log^{j}(n).$$

\end{lem}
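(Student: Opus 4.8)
**

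The plan is to prove the identity
\[
J_j(n) := \int_1^n \frac{1}{i_1}\int_{i_1}^n \frac{1}{i_2}\cdots \int_{i_{j-1}}^n \frac{1}{i_j}\ \diff i_j\cdots \diff i_1 = \frac{1}{j!}\log^j(n)
\]
by induction on $j$, with $n$ fixed throughout. The base case $j=1$ is immediate: $J_1(n) = \int_1^n \tfrac{1}{i_1}\diff i_1 = \log n = \tfrac{1}{1!}\log^1(n)$.

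For the inductive step, I would introduce the partial iterated integral that ``starts at a variable lower endpoint''. Concretely, for $x \in [1,n]$ define
\[
G_k(x) := \int_x^n \frac{1}{i_1}\int_{i_1}^n \frac{1}{i_2}\cdots \int_{i_{k-1}}^n \frac{1}{i_k}\ \diff i_k\cdots \diff i_1,
\]
so that $J_j(n) = G_j(1)$ and $G_1(x) = \log n - \log x = \log(n/x)$. The key observation is the recursion $G_{k}(x) = \int_x^n \tfrac{1}{t}\,G_{k-1}(t)\,\diff t$, obtained by peeling off the outermost integral. I would then prove by induction on $k$ that $G_k(x) = \tfrac{1}{k!}\log^k(n/x)$. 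Indeed, assuming this for $k-1$, we compute
\[
G_k(x) = \int_x^n \frac{1}{t}\cdot \frac{1}{(k-1)!}\log^{k-1}(n/t)\,\diff t.
\]
Substituting $u = \log(n/t)$, so $\diff u = -\tfrac{1}{t}\diff t$, with $u$ running from $\log(n/x)$ down to $0$, turns this into $\tfrac{1}{(k-1)!}\int_0^{\log(n/x)} u^{k-1}\,\diff u = \tfrac{1}{k!}\log^k(n/x)$, completing the induction. Evaluating at $x=1$ gives $J_j(n) = G_j(1) = \tfrac{1}{j!}\log^j(n)$, as desired.

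There is essentially no hard part here: the only point requiring a little care is the clean bookkeeping of the nested domains $1 \le i_1 \le i_2 \le \cdots \le i_j \le n$ and making sure the recursion $G_k(x) = \int_x^n \tfrac{1}{t}G_{k-1}(t)\diff t$ is set up with the outermost (rather than innermost) integral peeled off, so that the substitution $u=\log(n/t)$ applies directly. An alternative, which I would mention as a remark, is to recognize the integral as $\tfrac{1}{j!}$ times the Lebesgue measure (in the coordinates $\log i_1,\ldots,\log i_j$) of the simplex $\{0 \le \log i_1 \le \cdots \le \log i_j \le \log n\}$, which has volume $\tfrac{1}{j!}(\log n)^j$ by the standard simplex-volume formula; this gives the result in one line but the inductive argument above is more self-contained.
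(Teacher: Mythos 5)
Your proof is correct, and it takes a genuinely different route from the paper's. The paper proves the identity by a two-part induction on $j$: it shows that the full integral equals $a_j\log^j(n)$ and, simultaneously, that the integral with variable lower endpoint $i_0$ equals $\sum_{k=0}^{j}\frac{(-1)^k}{k!}a_{j-k}\log^{j-k}(n)\log^k(i_0)$, where the coefficients $a_j$ satisfy the recursion $a_{j+1}=\sum_{k=0}^{j}\frac{(-1)^k}{(k+1)!}a_{j-k}$; a second induction, using the binomial theorem, then identifies $a_j=\frac{1}{j!}$. In effect the paper expands what you call $G_k(x)=\frac{1}{k!}\log^k(n/x)$ binomially in powers of $\log n$ and $\log i_0$ without guessing the closed form, and recovers the coefficient afterwards. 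Your argument packages the inner integral as a function of the single quantity $\log(n/x)$, so the inductive step reduces to the one-line substitution $u=\log(n/t)$ and the coefficient bookkeeping disappears; this is shorter and more transparent, at the small price of having to guess the closed form for $G_k$ in advance (which the base case and one iteration of the recursion $G_k(x)=\int_x^n\frac{1}{t}G_{k-1}(t)\,\diff t$ readily suggest). Your remark identifying the integral with $\frac{1}{j!}$ times the volume of the simplex $\{0\le \log i_1\le\cdots\le\log i_j\le\log n\}$ after the change of variables $u_m=\log i_m$ is also a valid one-line proof, and arguably the most conceptual of the three.
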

\begin{proof}
We divide the proof in two steps: 

\vspace{2mm}

\textbf{Step 1.} We prove by induction over $j\in \N$ that:
\begin{align}
 &\int_1^n \frac{1}{i_1}\cdot \int_{i_1}^n \frac{1}{i_2}\cdot ... \cdot \int_{i_{j-1}}^n \frac{1}{i_{j}}\ \  \diff i_{j}\cdot ... \cdot \diff i_1 = a_j \cdot \log^{j}(n), \label{1}\\
 &\int_{i_0}^n \frac{1}{i_1} \cdot \int_{i_1}^n \frac{1}{i_2}\cdot  ... \cdot \int_{i_{j-1}}^n \frac{1}{i_{j}} \ \diff i_{j}\cdot ... \cdot \diff i_1 = \sum_{k=0}^j \frac{(-1)^k}{k!} a_{j-k} \log^{j-k}(n) \log^k (i_0), \label{2}
\end{align}
where the $a_j$'s satisfy the recursive relation 
\begin{equation}
\left \{ \begin{array}{l} a_0= a_1 =1,\\ a_{j+1}= \displaystyle \sum_{k=0}^j \frac{(-1)^k}{(k+1)!} a_{j-k}, \quad \forall j\ge 1.\end{array}\right .
\label{recursiverelation}
\end{equation}

The claim is trivial for $j=1$. Indeed 
$$\int_1^n \frac{1}{i_1} \, \diff i_1 = \log(n) = a_1 \cdot \log^1(n)$$
and 
$$\int_{i_0}^n \frac{1}{i_1}\, \diff i_1 = \log (n) - \log (i_0) = \frac{(-1)^0}{0!} a_{1-0} \log^{1-0}(n) + \frac{(-1)^1}{1!} a_{1-1} \log^{1-1}(n)\log^1(i_0).$$

Suppose now that the claim be true for $j$ and compute for $j+1$ using \eqref{2} for $j$ and the recursive relation \eqref{recursiverelation} defining the $a_j$'s:
\begin{align*}
\int_1^n \frac{1}{i_1} \, \cdot &\int_{i_1}^n \frac{1}{i_2} \cdot ... \cdot \int_{i_j}^n \frac{1}{i_{j+1}}\ \diff i_{j+1}\cdot ...\cdot \diff i_2\, \diff i_1 \\
&= \int_1^n \frac{1}{i_1} \cdot \Big (\sum_{k=0}^j \frac{(-1)^k}{k!} a_{j-k} \log^{j-k}(n) \log^k (i_1) \Big )\,  \diff i_1 \\
&= \Big [ \sum_{k=0}^j \frac{(-1)^k}{(k+1)!} a_{j-k} \log^{j-k}(n) \log^{k+1} (i_1)   \Big ]^n_1\\
&= \log^{j+1} (n) \cdot \sum_{k=0}^j \frac{(-1)^k}{(k+1)!} a_{j-k} \\
&= a_{j+1} \cdot \log^{j+1}(n).
\end{align*}
This shows \eqref{1} for $j+1$. Finally, we compute using \eqref{1} for $j+1$: 
\begin{align*}
\int_{i_0}^n \frac{1}{i_1} \, \cdot &\int_{i_1}^n \frac{1}{i_2}\cdot  ... \cdot \int_{i_{j}}^n \frac{1}{i_{j+1}} \ \diff i_{j+1}\cdot ... \cdot \diff i_1\\
				&= a_{j+1} \cdot \log^{j+1}(n) - \sum_{k=0}^j \frac{(-1)^k}{(k+1)!} a_{j-k} \log^{j-k}(n) \log^{k+1} (i_0) \\
				&= a_{j+1} \cdot \log^{j+1}(n) + \sum_{k=0}^j \frac{(-1)^{k+1}}{(k+1)!} a_{j-k} \log^{j-k}(n) \log^{k+1} (i_0)\\
				&= a_{j+1} \cdot \log^{j+1}(n) + \sum_{k=1}^{j+1} \frac{(-1)^{k}}{(k)!} a_{j+1-k} \log^{j+1-k}(n) \log^{k} (i_0)\\
				&= \sum_{k=0}^{j+1} \frac{(-1)^{k}}{(k)!} a_{j+1-k} \log^{j+1-k}(n) \log^{k} (i_0),
\end{align*}
thus showing \eqref{2} for $j+1$. This completes the proof of Step 1.

\vspace{2mm}

\textbf{Step 2.} We prove that $a_j=\frac{1}{j!}$ for every $j\in\N$ by induction over $j\in\N$.
Thus, assume that the claim be true up to $j$ and compute using the recursive relation:
\begin{align*}
a_{j+1} &= \sum_{k=0}^j \frac{(-1)^k}{(k+1)!} a_{j-k}\\
		&= \sum_{k=0}^j \dfrac{(-1)^k}{(k+1)!} \dfrac{1}{(j-k)!}\\
		&= \sum_{k=0}^j \frac{(-1)^k}{(j+1)!} \binom{j+1}{k+1}\\
		&= \dfrac{1}{(j+1)!} \sum_{k=0}^j (-1)^k \binom{j+1}{k+1}\\
		&= \dfrac{1}{(j+1)!} \sum_{k=1}^{j+1} (-1)^{k-1} \binom{j+1}{k}\\
		&= \dfrac{1}{(j+1)!} \Big (1+ \sum_{k=0}^{j+1} (-1)^{k-1} \binom{j+1}{k}\Big ) \\
		&= \dfrac{1}{(j+1)!},
\end{align*}
where the last equality follows from the binomial theorem.
\end{proof}

\vspace{1cm}
\noindent
\textsc{Dr. Luca Asselle}\\
Justus Liebig Universit\"at Gie\ss en\\
Arndtrstrasse 2\\
35392, Gie\ss en\\
Germany\\
%Website: \url{https://sites.google.com/site/lucaasselle/}\\
E-mail: $\mathrm{luca.asselle@math.uni}$-$\mathrm{giessen.de}$

\vspace{5mm}
\noindent
\textsc{Prof. Alessandro Portaluri}\\
Università  degli Studi di Torino\\
Largo Paolo Braccini, 2 \\
10095 Grugliasco, Torino\\
Italy\\
%Website: \url{https://sites.google.com/view/alessandro-portaluri}\\
E-mail: $\mathrm{alessandro.portaluri@unito.it}$

\end{document}